\renewcommand{\Re}{\mathrm{Re}\,}
\newtheorem{theorem}{Theorem}[section]
\newtheorem{lemma}[theorem]{Lemma}
\newtheorem{corollary}[theorem]{Corollary}
\theoremstyle{definition}
\theoremstyle{definition}
\newtheorem{example}[theorem]{Example}
\theoremstyle{remark}
\newtheorem{remark}[theorem]{Remark}
\numberwithin{equation}{section}
\def\XXint#1#2#3{{
\setbox0=\hbox{$#1{#2#3}{\int}$}
\vcenter{\hbox{$#2#3$}}\kern-.5\wd0}}
\DeclarePairedDelimiter\floor{\lfloor}{\rfloor}
\begin{document}
\title{On the optimal estimates and comparison of Gegenbauer expansion coefficients\footnotemark[1]}
\author{Haiyong Wang\footnotemark[2]
\\[3\jot]
School of Mathematics and Statistics \\
Huazhong University of Science and
Technology \\
Wuhan 430074, P. R. China}

\maketitle
\renewcommand{\thefootnote}{\fnsymbol{footnote}}

\footnotetext[1]{This work was supported by the National Natural
Science Foundation of China under grant 11301200 and the Fundamental
Research Funds for the Central Universities under grant 2015TS115.}

\footnotetext[2]{ E-mail: \texttt{haiyongwang@hust.edu.cn} }

\begin{abstract}
In this paper, we study optimal estimates and comparison of the
coefficients in the Gegenbauer series expansion. We propose an
alternative derivation of the contour integral representation of the
Gegenbauer expansion coefficients which was recently derived by
Cantero and Iserles [{\it SIAM J. Numer. Anal., 50 (2012),
pp.307--327}]. With this representation, we show that optimal
estimates for the Gegenbauer expansion coefficients can be derived,
which in particular includes Legendre coefficients as a special
case. Further, we apply these estimates to establish some rigorous
and computable bounds for the truncated Gegenbauer series. In
addition, we compare the decay rates of the Chebyshev and Legendre
coefficients. For functions whose singularity is outside or at the
endpoints of the expansion interval, asymptotic behaviour of the
ratio of the $n$th Legendre coefficient to the $n$th Chebyshev
coefficient is given, which provides us an illuminating insight for
the comparison of the spectral methods based on Legendre and
Chebyshev expansions.
\end{abstract}

{\bf Keywords:} Gegenbauer coefficients, optimal estimates, error
bounds, Legendre coefficients, Chebyshev coefficients.

\vspace{0.05in}

{\bf AMS classifications:} 41A10, 41A25, 65N35.

\section{Introduction}\label{sec:introduction}
Gegenbauer polynomials together with their special cases like
Legendre and Chebyshev polynomials are widely used in many branches
of numerical analysis such as interpolation and approximation
theories, the construction of quadrature formulas, the resolution of
Gibbs phenomenon and spectral and pseudo-spectral methods for
ordinary and partial differential equations. One of the most
attractive features is that these families of orthogonal polynomials
have excellent error properties in the approximation of a globally
smooth function. Typically, the error of the truncated series in
Gegenbauer polynomials decreases exponentially fast for analytic
functions as the number of the series increases. For entire
functions, the error will decrease even faster than exponential.
This remarkable property explains why Gegenbauer and its special
cases are extensively used to solve various problems arising from
science and engineering.

Let $C_k^{(\lambda)}(x)$ denote the Gegenbauer polynomial of degree
$k$ which is normalized by the following condition
\begin{equation}\label{eq:normalization gegenbauer}
C_{k}^{(\lambda)}(1) =
\frac{\Gamma(k+2\lambda)}{k!\Gamma(2\lambda)},
\end{equation}
where $\lambda > -\frac{1}{2}$ and $\lambda \neq 0$. For the special
case $\lambda = 0$, we have
\[
C_k^{(0)}(1) = \frac{2}{k}, \quad k \geq 1,
\]
and $ C_0^{(0)}(x) = 1$. For a fixed $\lambda$, these Gegenbauer
polynomials are orthogonal over the interval $[-1,1]$ with respect
to the weight function $\omega(x) = (1-x^2)^{\lambda-\frac{1}{2}}$
and
\begin{equation}
\int_{-1}^{1} (1-x^2)^{\lambda-\frac{1}{2}} C_{m}^{(\lambda)}(x)
C_{n}^{(\lambda)}(x) dx = h_n^{(\lambda)}  \delta_{mn},
\end{equation}
where $\delta_{mn}$ is the Kronnecker delta and
\begin{align}\label{eq:normalization constant}
h_n^{(\lambda)} = \frac{2^{1-2\lambda} \pi}{ \Gamma(\lambda)^2 }
\frac{\Gamma(n+2\lambda)}{\Gamma(n+1) (n+\lambda)}, \quad
\lambda\neq 0.
\end{align}
For applications in numerical analysis, it is often required to
expand a given function $f(x)$ in terms of Gegenbauer polynomials as
\begin{align}\label{eq:gegenbauer expansion}
f(x) = \sum_{n = 0}^{\infty} a_n^{\lambda} C_n^{(\lambda)}(x),
\end{align}
where the {\it Gegenbauer coefficients} are defined by
\begin{align}\label{eq:gegenbauer coefficients}
a_n^{\lambda} = \frac{1}{h_n^{(\lambda)} } \int_{-1}^{1}
(1-x^2)^{\lambda-\frac{1}{2}} f(x) C_n^{(\lambda)}(x) dx, \quad  n
\geq 0.
\end{align}
The Gegenbauer expansion \eqref{eq:gegenbauer expansion} is an
invaluable and powerful tool in a wide range of practical
applications. In particular, they are widely used in the resolution
of Gibbs phenomenon and numerical solutions of ordinary and partial
differential equations (see, for example,
\cite{boyd2005trouble,gottlieb1995gibbs1,gottlieb1995gibbs2,gottlieb1996gibbs,gottlieb1997gibbs,gottlieb1992gibbs,ben1999gegenbauer,olver2013fast}).
In these applications, it is frequently required to estimate the
error bound of the truncated Gegenbauer expansion in the uniform
norm. It is well known that the error of the truncated spectral
expansion depends solely on how fast the corresponding spectral
expansion coefficients decay. Therefore, the final aim is reduced to
the estimate of decay rate of the corresponding spectral
coefficients.

For the Gegenbauer coefficients, the estimate of their decay rate
was received attention in the past few decades and some results have
been developed in the literatures. For example, to defeat the Gibbs
phenomenon, Gottlieb and Shu in a number of papers
\cite{gottlieb1995gibbs1,gottlieb1995gibbs2,gottlieb1996gibbs,gottlieb1997gibbs,gottlieb1992gibbs}
proposed to reexpand the truncated Fourier or Chebyshev series into
a Gegenbauer series. To prove the exponential convergence of the
Gegenbauer series, a rough estimate of the Gegenbauer coefficients
was proposed (see \cite[Eqn.~(4.3)]{gottlieb1992gibbs}). More
recently, this issue was considered in
\cite{xiang2012error,zhao2013sharp} and some sharper estimates were
given. In these works, the main idea is to express the Gegenbauer
coefficient $a_n^{\lambda}$ as an infinite series either by using
the Chebyshev expansion of the first kind of $f$ or the Cauchy
integral formula together with the generating function of the
Chebyshev polynomial of the second kind, and then estimate the
derived infinite series term by term. As we shall see, these results
are often overestimated. For special cases of the Gegenbauer
spectral expansion such as Chebyshev and Legendre expansions, the
estimates of their expansion coefficients have been studied
extensively in the past few decades (see
\cite{bernstein1912ordre,boyd2009large,elliott1964evaluation,elliott1974asymptotic,wang2012convergence,xiang2012error,zhao2013sharp}
and references therein).

In this article we will develop a novel approach to study the
estimate of the Gegenbauer coefficients. The starting point of our
analysis is the contour integral expression which was recently
derived by Cantero and Iserles in \cite{cantero2012rapid}. Their
idea was based on expressing the Gegenbauer coefficients in terms of
an infinite linear combination of derivatives at the origin and then
by an integral transform with a Gauss hypergeometric function as its
kernel. This kernel function converges too slowly to be
computationally useful. To remedy this, a hypergeometric
transformation was proposed to replace the original kernel by a new
one which converges rapidly. This, to the author's knowledge, is the
first result on the contour integral expression of the Gegenbauer
coefficients. However, as the authors admit in their paper
\cite{cantero2012rapid,iserles2011fast}, the proof of their
derivation is rather convoluted.

Here, we shall provide an alternative and simple approach to derive
the contour integral expression of the Gegenbauer coefficients. Our
idea is motivated by the connection formula between the Gegenbauer
and Chebyshev polynomials of the second kind, which was initiated in
\cite{wang2014fast}. We show that the contour integral expression of
the Gegenbauer coefficients can be derived by rearranging the
Chebyshev coefficients of the second kind. With the derived contour
integral expression in hand, we prove that optimal estimates for the
Gegenbauer coefficients can be obtained as direct consequences. To
the best of our knowledge, these are the first results of this kind
that are proved to be optimal. In contrast to existing studies,
numerical results indicate that our estimates are superior. Further,
we apply these estimates to establish some rigorous and computable
bounds for the truncated Gegenbauer series in the uniform norm.

Among the families of Gegenbauer spectral expansions, Chebyshev and
Legendre expansions are the most popular and important cases (see
\cite{atkinson2012spherical,olver2013fast,trefethen2013approximation}).
One particularly interesting question for these two expansions is
the comparison of the decay rates of the Legendre and Chebyshev
coefficients. This issue was considered by Lanczos in
\cite{lanczos1952introduction} and later by Fox and Parker in
\cite{fox1968chebyshev}, where these authors deduced that the $n$th
Chebyshev coefficient decays approximately $\sqrt{n\pi}/2$ faster
than its Legendre counterpart. Very recently, Boyd and Petschek in
\cite{boyd2013relationships} checked this issue carefully and showed
that this assertion is not true for some concrete examples. In this
work, with the help of the contour integral expression of Gegenbauer
coefficients, we improve significantly the observation stated in
\cite{boyd2013relationships} and present delicate results on the
comparison of the Legendre and Chebyshev coefficients. More
precisely, for functions whose singularity is outside the interval
$[-1,1]$, the asymptotic behaviour of the ratio $\gamma_n =
a_n^L/a_n^C$, where $a_n^L$ and $a_n^C$ denote the $n$th Legendre
and Chebyshev coefficient of $f$ respectively, is derived which
shows that the above assertion by Lanczos and Fox and Parker is
always false if the singularity is finite. We also extend our
results to functions with endpoint singularity and subtle results on
the asymptotic behaviour of $\gamma_n$ are given.


This paper is organized as follows. In the next section, we collect
some well known properties of Gegenbauer polynomials which will be
used in the later sections. In section \ref{sec:contour gegenbauer},
we provide an alternative way to derive the contour integral
expression of Gegenbauer coefficients. With the contour integral
expression obtained, in section \ref{sec:estimate gegenbauer
coefficients} we present optimal estimates of the Gegenbauer
coefficients and apply these to derive error bounds of the truncated
Gegenbauer expansion in the uniform norm. A comparison of the
Legendre and Chebyshev coefficients are discussed in section
\ref{sec:comparison}. Finally, in section \ref{sec:conclusion} we
give some final remarks .

%
%
%

\section{Some properties of Gegenbauer polynomials}
In this section we will collect some well-known properties of
Gegenbauer polynomials which will be used in the subsequent
analysis. All these properties can be found in
\cite{szego1939orthogonal}.

Gegenbauer polynomials satisfy the following three-term recurrence
relation
\begin{align}
(n+1) C_{n+1}^{(\lambda)}(x) = 2x( n + \lambda) C_{n}^{(\lambda)}(x)
- (n + 2\lambda - 1) C_{n-1}^{(\lambda)}(x), \quad n\geq 1,
\end{align}
where $C_0^{(\lambda)}(x) = 1$ and $C_1^{(\lambda)}(x) = 2\lambda
x$. These polynomials also satisfy the following symmetry relations
\begin{align}
C_{n}^{(\lambda)}(x) = (-1)^n C_{n}^{(\lambda)}(-x), \quad n\geq 0,
\end{align}
which imply that $C_{n}^{(\lambda)}(x)$ is an even function for even
$n$ and an odd function for odd $n$. Moreover, Gegenbauer
polynomials satisfy the following inequality
\begin{align}\label{eq:gegenbauer inequality}
|C_{n}^{(\lambda)}(x)| \leq C_{n}^{(\lambda)}(1), \quad |x|\leq1,~~
\lambda>0,~~ n\geq 0.
\end{align}

Gegebauer polynomials include some important polynomials such as
Legendre and Chebyshev polynomials as special cases. More
specifically, we have
\begin{align}\label{eq:gegenbauer and legendre}
P_n(x) = C_n^{(\frac{1}{2})}(x),  \quad  U_n(x) = C_n^{(1)}(x),
\quad n\geq 0,
\end{align}
where $P_n(x)$ is the Legendre polynomial of degree $n$ and $U_n(x)$
is the Chebyshev polynomial of the second kind of degree $n$. When
$\lambda = 0$, the Gegenbauer polynomials reduce to the Chebyshev
polynomials of the first kind by the following definition
\begin{align}\label{eq:gegenbauer and chebyshev first}
\lim_{\lambda\rightarrow0^{+}} \lambda^{-1} C_{n}^{(\lambda)}(x) =
\frac{2}{n} T_n(x), \quad n \geq 1,
\end{align}
where $T_n(x)$ is the Chebyshev polynomial of the first kind of
degree $n$.

\section{A simple derivation of contour integral expression for Gegenbauer coefficients}\label{sec:contour gegenbauer}
In this section we shall present a new and simple derivation of the
contour integral expression of Gegenbauer coefficients. Our idea is
based on the connection formula between the Gegenbauer polynomial
and the Chebyshev polynomial of the second kind.

\begin{lemma}\label{lemma:conversion gegenbauer}
We have
\begin{align}
C_n^{(\lambda)}(x) = \sum_{k=0}^{\floor*{ \frac{n}{2} } } \frac{
(\lambda)_{n-k} (\lambda - \mu)_{k} (n+\mu-2k) }{ (\mu+1)_{n-k} k!
\mu} C_{n-2k}^{(\mu)}(x),
\end{align}
where $\floor{\cdot}$ denotes the integer part.
\end{lemma}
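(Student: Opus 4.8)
The plan is to determine the connection coefficients directly by orthogonality and then to evaluate the resulting integral in closed form. Since $\{C_m^{(\mu)}\}_{m\ge0}$ is a basis of the polynomials and, by the symmetry relation $C_n^{(\lambda)}(x)=(-1)^nC_n^{(\lambda)}(-x)$, the polynomial $C_n^{(\lambda)}$ has the same parity as $n$, there is a unique expansion
\begin{equation*}
C_n^{(\lambda)}(x)=\sum_{k=0}^{\lfloor n/2\rfloor}A_{n,k}\,C_{n-2k}^{(\mu)}(x),
\end{equation*}
in which only indices congruent to $n$ modulo $2$ occur; it therefore suffices to identify $A_{n,k}$. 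Multiplying by $(1-x^2)^{\mu-1/2}C_{n-2k}^{(\mu)}(x)$ and using orthogonality with normalization constant $h_{n-2k}^{(\mu)}$ gives
\begin{equation*}
A_{n,k}=\frac{1}{h_{n-2k}^{(\mu)}}\int_{-1}^{1}(1-x^2)^{\mu-1/2}\,C_n^{(\lambda)}(x)\,C_{n-2k}^{(\mu)}(x)\ud x .
\end{equation*}

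First I would insert the Rodrigues formula $(1-x^2)^{\mu-1/2}C_{m}^{(\mu)}(x)=c_m\,\tfrac{d^m}{dx^m}(1-x^2)^{m+\mu-1/2}$ (with $m=n-2k$ and $c_m$ an explicit constant) and integrate by parts $m$ times. For $\mu>0$ the factor $(1-x^2)^{m+\mu-1/2}$ vanishes together with its first $m-1$ derivatives at $x=\pm1$, so all boundary terms disappear and the derivatives land on $C_n^{(\lambda)}$. Next I would apply the differentiation rule $\tfrac{d^{j}}{dx^{j}}C_n^{(\lambda)}(x)=2^{j}(\lambda)_jC_{n-j}^{(\lambda+j)}(x)$ with $j=n-2k$, which collapses the $(n-2k)$-fold derivative to the single polynomial $2^{n-2k}(\lambda)_{n-2k}C_{2k}^{(\lambda+n-2k)}(x)$. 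This reduces everything to the moment integral
\begin{equation*}
\int_{-1}^{1}(1-x^2)^{n-2k+\mu-1/2}\,C_{2k}^{(\lambda+n-2k)}(x)\ud x ,
\end{equation*}
which I would evaluate by expanding $C_{2k}^{(\lambda+n-2k)}$ in powers of $x$ and integrating each monomial against the weight through the Beta integral $\int_{-1}^{1}(1-x^2)^{\nu-1/2}x^{2p}\ud x=\Gamma(p+\tfrac12)\Gamma(\nu+\tfrac12)/\Gamma(p+\nu+1)$, here with $\nu=n-2k+\mu$.

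The main obstacle is the final bookkeeping. The moment integral emerges as a terminating Gauss-type hypergeometric sum, and collecting it together with $c_m$, $h_{n-2k}^{(\mu)}$ and $(\lambda)_{n-2k}$ requires reducing a ratio of several Gamma functions and Pochhammer symbols to the compact form $A_{n,k}=(\lambda)_{n-k}(\lambda-\mu)_k(n+\mu-2k)/[(\mu+1)_{n-k}\,k!\,\mu]$. I would do this by writing every Pochhammer factor as a Gamma quotient, cancelling, and recognising the surviving sum as an instance of a Chu--Vandermonde (terminating Gauss) evaluation. The factor $(\lambda-\mu)_k$ arises precisely as $(\alpha-\nu)_k$ with $\alpha=\lambda+n-2k$ and $\nu=n-2k+\mu$, so that $\alpha-\nu=\lambda-\mu$; the identities $(\mu+1)_{n-k}\mu=(\mu)_{n-k+1}$ and $(\lambda)_{n-2k}(\lambda+n-2k)_k=(\lambda)_{n-k}$ are what make the answer telescope. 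Since both sides are, for fixed $n$, rational in $\mu$ and polynomial in $\lambda$, it suffices to establish the identity for $\mu>0$ and then invoke analytic continuation to cover all admissible $\mu>-\tfrac12$, $\mu\neq0$.

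An alternative, entirely self-contained route would avoid the Rodrigues formula and the Beta integral: substitute the ansatz into the three-term recurrence for $C_n^{(\lambda)}$, re-expand each $x\,C_{n-2k}^{(\mu)}$ by the recurrence for $C_m^{(\mu)}$, and match the coefficient of each $C_{n+1-2j}^{(\mu)}$ to obtain a two-index recurrence for the $A_{n,k}$. One then verifies that the proposed formula satisfies this recurrence together with the initial data $A_{0,0}=1$ and $A_{1,0}=\lambda/\mu$, so that induction on $n$ concludes. In either route the genuine work is the same Pochhammer identity; the recurrence version trades the integral evaluation for a lengthier but elementary algebraic verification that uses only the relations already recorded in Section~2.
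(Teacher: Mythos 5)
Your proposal is sound, but it cannot be measured against ``the paper's own proof'' in any substantive way, because the paper does not prove this lemma: its proof consists of the single line ``See \cite[p.~360]{andrews2000special}''. What you outline is essentially the classical derivation found in such references, so in effect you are supplying the argument the paper omits. Your chain of steps checks out: parity plus the basis property of $\{C_m^{(\mu)}\}_{m\ge0}$ (valid since $\mu>-\tfrac12$, $\mu\neq0$ gives $\deg C_m^{(\mu)}=m$) justifies the ansatz; orthogonality identifies $A_{n,k}$ with the stated integral; Rodrigues' formula plus $n-2k$ integrations by parts and the rule $\tfrac{d^{j}}{dx^{j}}C_n^{(\lambda)}=2^{j}(\lambda)_j C_{n-j}^{(\lambda+j)}$ reduce everything to your moment integral (note the boundary terms already vanish for all $\mu>-\tfrac12$, not only $\mu>0$, since each of the first $n-2k-1$ derivatives of $(1-x^2)^{n-2k+\mu-1/2}$ retains a factor $(1-x^2)^{\mu+1/2}$, so the positivity assumption and the continuation step could even be dispensed with). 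The bookkeeping you defer does close up as you predict: expanding $C_{2k}^{(\lambda+n-2k)}$ in monomials, applying your Beta integral, and using the Legendre duplication formula yields a terminating series proportional to ${}_2\mathrm{F}_1\left(-k,\,\lambda+n-k;\,n-2k+\mu+1;\,1\right)$, which Chu--Vandermonde evaluates to $(\mu-\lambda-k+1)_k/(n-2k+\mu+1)_k=(-1)^k(\lambda-\mu)_k/(n-2k+\mu+1)_k$, producing exactly the factor $(\lambda-\mu)_k$; and your two telescoping identities $(\mu+1)_{n-k}\,\mu=(\mu)_{n-k+1}$ and $(\lambda)_{n-2k}(\lambda+n-2k)_k=(\lambda)_{n-k}$ are both correct. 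The alternative route via the three-term recurrence would also work, but it is strictly more laborious and proves nothing extra; the orthogonality--Rodrigues route is the one worth writing out, and it buys the paper a self-contained proof in place of a citation.
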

\begin{proof}
See \cite[p.~360]{andrews2000special}.
\end{proof}

Based on the above lemma, we are now able to derive the contour
integral expression of Gegenbauer coefficients. Before that, we
introduce the Bernstein ellipse $\mathcal{E}_{\rho}$ which is
defined by
\begin{equation}
 \mathcal{E}_{\rho} =  \left\{ z \in \mathbb{C} ~~\bigg| ~~z =
 \frac{1}{2} \big( \rho e^{i \theta} + \rho^{-1} e^{-i \theta} \big),~~ \rho >1,~  ~0 \leq
\theta  \leq   2\pi  \right\}.
\end{equation}
Throughout this paper, we define the positive direction of contour
integrals as the counterclockwise direction.
\begin{theorem}\label{thm:gegenbauer contour integral}
Suppose that $f$ is analytic inside and on the ellipse
$\mathcal{E}_{\rho}$, then for each $n \geq 0$,
\begin{align}\label{eq:contour gegenbauer coefficient}
a_n^{\lambda} & = \frac{ c_{n,\lambda} }{i \pi }
\oint_{\mathcal{E}_{\rho}} \frac{f(z)}{ (z \pm \sqrt{z^2 - 1})^{n+1}
} {}_2\mathrm{ F}_1\left[\begin{matrix} n + 1, & 1 - \lambda;
\\   n + \lambda + 1; \hspace{-1cm} &\end{matrix}  \frac{1}{( z \pm \sqrt{z^2 - 1}
)^{2}} \right]  dz,
\end{align}
where the sign is chosen so that $|z \pm \sqrt{z^2 - 1}|>1$ and
\begin{align}\label{eq:gegenbauer constant}
c_{n,\lambda} = \frac{\Gamma(\lambda)
\Gamma(n+1)}{\Gamma(n+\lambda)},
\end{align}
and $\Gamma(z)$ is the gamma function. The Gauss hypergeometric
function ${}_2 \mathrm{F}_1$ is defined by
\[
{}_2 \mathrm{F}_1 \left[\begin{matrix} a_1,~ a_2;& \\  b_1;
\end{matrix} \hspace{-.25cm} z \right] = \sum_{k = 0}^{\infty} \frac{ (a_1)_k
(a_2)_k }{ (b_1)_k } \frac{ z^k }{ k! },
\]
where $(z)_k$ denotes the Pochhammer symbol defined by $(z)_{k} =
(z)_{k-1} (z + k - 1)$ for $k \geq 1$ and $(z)_0 = 1$.
\end{theorem}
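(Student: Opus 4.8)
The plan is to route the whole computation through the Chebyshev polynomials of the second kind, exploiting the fact that $U_m=C_m^{(1)}$ is the special case of Lemma \ref{lemma:conversion gegenbauer} in which the source index equals $1$, and that the second-kind coefficients themselves admit a clean contour representation. First I would establish that representation. Writing $w(z)=z\pm\sqrt{z^2-1}$ with the sign chosen so that $|w|>1$ off $[-1,1]$, the Joukowski substitution $z=\tfrac12(w+w^{-1})$ together with $x=\cos\theta$ gives the kernel expansion
\[
\frac{1}{z-x}=2\sum_{m=0}^{\infty}\frac{U_m(x)}{w(z)^{m+1}},
\]
which converges geometrically for $z\in\mathcal{E}_{\rho}$ and $x\in[-1,1]$ since there $|w(z)|=\rho>1$. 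Inserting this into the Cauchy integral $f(x)=\frac{1}{2\pi i}\oint_{\mathcal{E}_{\rho}}\frac{f(z)}{z-x}\ud z$ and matching the result against the second-kind expansion $f=\sum_m b_m U_m$ yields the representation $b_m=\frac{1}{i\pi}\oint_{\mathcal{E}_{\rho}} f(z)\,w(z)^{-(m+1)}\ud z$.

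Next I would convert from second-kind coefficients to Gegenbauer coefficients. Applying Lemma \ref{lemma:conversion gegenbauer} with source index $1$ and target index $\lambda$ gives the finite expansion
\[
U_m(x)=\sum_{k=0}^{\lfloor m/2\rfloor}\frac{(m-k)!\,(1-\lambda)_k\,(m+\lambda-2k)}{(\lambda+1)_{m-k}\,k!\,\lambda}\,C_{m-2k}^{(\lambda)}(x).
\]
Substituting this into $f=\sum_m b_m U_m$, relabelling $n=m-2k$ (so $m=n+2k$), and collecting the coefficient of $C_n^{(\lambda)}$ produces
\[
a_n^{\lambda}=\sum_{k=0}^{\infty} b_{n+2k}\,\frac{(n+k)!\,(1-\lambda)_k\,(n+\lambda)}{(\lambda+1)_{n+k}\,k!\,\lambda}.
\]

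Finally I would insert the contour formula for $b_{n+2k}$, interchange summation and integration, and factor out $w^{-(n+1)}$, leaving the inner series $\sum_{k\ge0}\frac{(n+k)!\,(1-\lambda)_k\,(n+\lambda)}{(\lambda+1)_{n+k}\,k!\,\lambda}\,w^{-2k}$. Using the identities $(n+k)!=n!\,(n+1)_k$ and $(\lambda+1)_{n+k}=(\lambda+1)_n\,(n+\lambda+1)_k$, the $k$-independent prefactor collapses exactly to $c_{n,\lambda}=\Gamma(\lambda)\Gamma(n+1)/\Gamma(n+\lambda)$, while the surviving sum is precisely ${}_2\mathrm{F}_1[\,n+1,\,1-\lambda;\,n+\lambda+1;\,w^{-2}]$, delivering \eqref{eq:contour gegenbauer coefficient}.

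The hard part will be the analytic justification rather than the algebra. I must verify the geometric decay $b_m=O(\rho^{-m})$ for $f$ analytic on and inside $\mathcal{E}_{\rho}$, so that the series defining $a_n^{\lambda}$ converges and the sum–integral interchange is legitimate; this can be handled by dominated convergence using $|w|=\rho$ uniformly on $\mathcal{E}_{\rho}$, together with the fact that the connection coefficients grow only polynomially in $k$. Convergence of the resulting ${}_2\mathrm{F}_1$ series is then immediate since $|w^{-2}|=\rho^{-2}<1$. Throughout, the branch $w=z\pm\sqrt{z^2-1}$ with $|w|>1$ must be tracked consistently around the entire contour so that all the geometric bounds hold simultaneously.
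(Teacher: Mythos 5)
Your proposal is correct and follows essentially the same route as the paper's own proof: expand $f$ in Chebyshev polynomials of the second kind, use the contour representation of the coefficients $b_m$, apply Lemma \ref{lemma:conversion gegenbauer} with source parameter $1$ and target $\lambda$ to obtain the connection coefficients, and resum the resulting series into the ${}_2\mathrm{F}_1$ after the prefactor collapses to $c_{n,\lambda}$. The only differences are cosmetic: you derive the representation of $b_m$ from the generating function of $U_m$ (the paper cites \cite{wang2014fast}), and you make explicit the dominated-convergence justification for the sum--integral interchange, which the paper performs without comment.
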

\begin{proof}
First, we expand the function in terms of the Chebyshev polynomials
of the second kind,
\begin{align}\label{eq:chebyshev expansion second}
f(x) = \sum_{j=0}^{\infty} b_j U_j(x),  \quad b_j = \frac{2}{\pi}
\int_{-1}^{1} \sqrt{1-x^2} f(x) U_j(x) dx.
\end{align}
Alternatively, the above Chebyshev coefficients can be expressed by
the following contour integral representation \cite{wang2014fast}
\begin{equation}\label{eq:chebyshev coefficients second}
b_j = \frac{1}{\pi i } \oint_{ \mathcal{E}_{\rho} } \frac{f(z) }{ (z
\pm \sqrt{z^2 - 1})^{j+1} } dz, \quad j\geq 0,
\end{equation}
where the sign is chosen so that $|z \pm \sqrt{z^2 - 1}|>1$.
Substituting the above Chebyshev expansion into the Gegenbauer
coefficients yields
\begin{align}\label{eq:gegenbauer and chebyshev}
a_n^{\lambda} & = \frac{1}{h_n^{(\lambda)}} \int_{-1}^{1}
(1-x^2)^{\lambda-\frac{1}{2}} \left( \sum_{j=0}^{\infty} b_j U_j(x)
\right) C_n^{(\lambda)}(x) dx
\nonumber \\
& = \sum_{m=0}^{\infty} b_{n+2m} \sigma_{n+2m, n}^{( \lambda )},
\end{align}
where we have used the parity of the Gegenbauer and Chebyshev
polynomials and
\begin{align}\label{eq:connection coefficients}
\sigma_{n+2m, n}^{( \lambda )} = \frac{1}{h_n^{(\lambda)}}
\int_{-1}^{1} (1-x^2)^{\lambda-\frac{1}{2}} U_{n+2m}(x)
C_n^{(\lambda)}(x) dx.
\end{align}
Note that $U_n(x)$ is a special case of Gegenbauer polynomial, e.g.,
$U_n(x) = C_n^{(1)}(x)$. From Lemma \ref{lemma:conversion
gegenbauer} we obtain
\begin{align*}
U_{n+2m}(x) = \sum_{k=0}^{\floor{ \frac{n+2m}{2} } } \frac{
(1)_{n+2m-k} (1 - \lambda)_{k} (n+2m+\lambda-2k) }{
(\lambda+1)_{n+2m-k} k! \lambda} C_{n+2m-2k}^{(\lambda)}(x).
\end{align*}
Substituting this into \eqref{eq:connection coefficients} and using
the orthogonality of Gegenbauer polynomials yields
\begin{align}\label{eq:connection coefficients formula}
\sigma_{n+2m, n}^{( \lambda )} &= \frac{ (1)_{n+m} (1 - \lambda)_{m}
(n+\lambda) }{ (\lambda+1)_{n+m} m! \lambda} \nonumber \\
& = \frac{\Gamma(\lambda) \Gamma(n+1)}{\Gamma(n+\lambda)}
\frac{(n+1)_m (1-\lambda)_m}{(n+\lambda+1)_m m!}.
\end{align}
Combining this with equation \eqref{eq:gegenbauer and chebyshev}
gives
\begin{align*}
a_n^{\lambda} & = \frac{\Gamma(\lambda)
\Gamma(n+1)}{\Gamma(n+\lambda)} \frac{1}{\pi i} \oint_{
\mathcal{E}_{\rho}} \frac{ f(z)}{(z \pm \sqrt{z^2 - 1})^{n+1}}
\sum_{m=0}^{\infty} \frac{ (n+1)_m (1-\lambda)_m }{ (n+\lambda+1)_m
m!} \frac{1}{(z \pm \sqrt{z^2 -
1})^{2m}} dz \nonumber \\
& = \frac{\Gamma(\lambda) \Gamma(n+1)}{\Gamma(n+\lambda)}
\frac{1}{\pi i} \oint_{ \mathcal{E}_{\rho}} \frac{ f(z)}{(z \pm
\sqrt{z^2 - 1})^{n+1}} {}_2\mathrm{ F}_1\left[\begin{matrix} n + 1,
& 1 - \lambda;
\\   n + \lambda + 1; \hspace{-1cm} &\end{matrix} \frac{1}{( z \pm \sqrt{z^2 - 1}
)^{2}} \right]  dz.
\end{align*}
This completes the proof.
\end{proof}

The following corollaries are immediate consequences of Theorem
\ref{thm:gegenbauer contour integral}.
\begin{corollary}\label{corollary:integer lambda}
When $\lambda$ is a positive integer, the contour integral
expression of the Gegenbauer coefficients can be further represented
as
\begin{align}
a_n^{\lambda} = \frac{c_{n,\lambda}}{i \pi} \sum_{k=0}^{\lambda - 1}
\frac{ (n+1)_k (1-\lambda)_k }{ (n+\lambda+1)_k k! } \oint_{
\mathcal{\mathcal{E}_{\rho}} } \frac{f(z)}{ (z \pm \sqrt{z^2 -
1})^{n+2k+1} } dz, \quad  n \geq 0.
\end{align}
\begin{proof}
Note that the Gauss hypergeometric function on the right hand side
of \eqref{eq:contour gegenbauer coefficient} reduces to a finite sum
when $\lambda$ is a positive integer,
\[
{}_2\mathrm{ F}_1\left[\begin{matrix} n + 1,~ 1 - \lambda; &
\\   n + \lambda + 1;  &\end{matrix} \hspace{-.25cm}  \frac{1}{( z \pm \sqrt{z^2 - 1}
)^{2}} \right] = \sum_{k=0}^{\lambda-1} \frac{ (n+1)_k (1-\lambda)_k
}{ (n+\lambda+1)_k k! } \frac{1}{(z \pm \sqrt{z^2-1})^{2k}}.
\]
The desired result follows.
\end{proof}
\end{corollary}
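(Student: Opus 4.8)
The plan is to read this off directly from Theorem~\ref{thm:gegenbauer contour integral}, the only new ingredient being that an integer value of $\lambda$ makes the Gauss hypergeometric factor in \eqref{eq:contour gegenbauer coefficient} degenerate into a polynomial. First I would look at the numerator parameters of the ${}_2\mathrm{F}_1$, namely $n+1$ and $1-\lambda$. For $\lambda\in\{1,2,3,\dots\}$ the second of these is a non-positive integer, and the associated Pochhammer symbol factorizes as $(1-\lambda)_k=\prod_{j=0}^{k-1}(1-\lambda+j)$; the factor indexed by $j=\lambda-1$ equals zero, so $(1-\lambda)_k=0$ for every $k\ge\lambda$. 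Consequently the defining series of the hypergeometric function truncates after exactly $\lambda$ terms, giving
\[
{}_2\mathrm{F}_1\!\left[\begin{matrix} n+1,~ 1-\lambda;& \\ n+\lambda+1; \end{matrix}\hspace{-.25cm} w\right]=\sum_{k=0}^{\lambda-1}\frac{(n+1)_k(1-\lambda)_k}{(n+\lambda+1)_k\,k!}\,w^k,
\]
where $w=(z\pm\sqrt{z^2-1})^{-2}$.

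The remaining step is pure bookkeeping: I would substitute this finite polynomial into \eqref{eq:contour gegenbauer coefficient}, note that $w^k=(z\pm\sqrt{z^2-1})^{-2k}$, and interchange the now finite sum with the contour integral. Merging the two powers of $z\pm\sqrt{z^2-1}$ through $(z\pm\sqrt{z^2-1})^{-(n+1)}(z\pm\sqrt{z^2-1})^{-2k}=(z\pm\sqrt{z^2-1})^{-(n+2k+1)}$ and factoring out the constant $c_{n,\lambda}/(i\pi)$ reproduces exactly the claimed representation, with the integrand $f(z)/(z\pm\sqrt{z^2-1})^{n+2k+1}$.

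As for obstacles, there are essentially none of an analytic character. The exchange of summation and integration needs no convergence argument because the sum is finite, so the entire content is the termination-of-series observation. The one point deserving a moment's care is the location of the vanishing factor: verifying that $(1-\lambda)_k$ first vanishes at $k=\lambda$, and not earlier, is what pins the summation range to $0\le k\le\lambda-1$ and thereby yields precisely $\lambda$ surviving terms. Everything else is inherited verbatim from Theorem~\ref{thm:gegenbauer contour integral}.
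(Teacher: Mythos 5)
Your proposal is correct and follows exactly the paper's own route: the Pochhammer symbol $(1-\lambda)_k$ vanishes for $k\geq\lambda$, so the ${}_2\mathrm{F}_1$ in \eqref{eq:contour gegenbauer coefficient} truncates to a polynomial of degree $\lambda-1$ in $(z\pm\sqrt{z^2-1})^{-2}$, and substituting this finite sum into the contour integral gives the result. The only difference is that you spell out the vanishing of the Pochhammer factor and the merging of powers, which the paper leaves implicit.
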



The Cauchy transform of $\frac{1}{2} (1 - x^2)^{\lambda -
\frac{1}{2}} C_n^{(\lambda)}(x)$ is defined by
\begin{equation}\label{def:cauchy transform gegenbauer}
Q_n^{(\lambda)}(z) = \frac{1}{2} \int_{-1}^{1} \frac{ (1 -
x^2)^{\lambda - \frac{1}{2}} C_n^{(\lambda)}(x) }{z - x} dx, \quad
z\in \mathbb{C}\setminus [-1,1],
\end{equation}
which appears in the remainder of Gauss-Gegenbauer quadrature (see
\cite{gautschi1983error}). The proof of our above theorem leads to
an explicit form of this function which has not been found in the
literature. We state it in the following.
\begin{corollary}\label{corollary:gegenbauer function second}
For each $n \geq 0$, we have
\begin{align}
Q_n^{(\lambda)}(z) =  \frac{ c_{n,\lambda} h_n^{(\lambda)}}{ ( z \pm
\sqrt{z^2 - 1} )^{n+1} } {}_2\mathrm{ F}_1\left[\begin{matrix} n +
1,~ 1 - \lambda;&
\\   n + \lambda + 1;  &\end{matrix} \hspace{-.25cm} \frac{1}{( z \pm \sqrt{z^2 - 1}
)^{2}} \right],
\end{align}
where $c_{n,\lambda}$ and $h_n^{(\lambda)}$ are defined as in
\eqref{eq:gegenbauer constant} and \eqref{eq:normalization constant}
respectively and the sign is chosen such that $|z \pm \sqrt{z^2 -
1}| > 1$.
\begin{proof}
By setting $z = \frac{1}{2}(u + u^{-1})$ and thus $ u = z \pm
\sqrt{z^2 - 1} $ and the sign is chosen so that $|u|>1$. Recall the
generating function of $U_n(x)$ (see, e.g.,
\cite[Eqn.~(18.12.10)]{olver2010nist}), we obtain
\begin{align}
\frac{1}{z - x} &=  \frac{2}{u} \frac{1}{1 - 2xu^{-1} + u^{-2}} \nonumber \\
&= 2 \sum_{k=0}^{\infty} \frac{U_k(x)}{u^{k+1}}.
\end{align}
Substituting this into the integral expression of
$Q_n^{(\lambda)}(z)$ and using \eqref{eq:connection coefficients}
and \eqref{eq:connection coefficients formula} gives the desired
result.
\end{proof}
\end{corollary}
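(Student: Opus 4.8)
The plan is to reuse the orthogonality computation already carried out in the proof of Theorem \ref{thm:gegenbauer contour integral}, but now with the Cauchy kernel $1/(z-x)$ playing the role that $f$ did there. First I would introduce the change of variable $z = \tfrac{1}{2}(u+u^{-1})$, so that $u = z \pm \sqrt{z^2-1}$ with the branch chosen so that $|u|>1$; this is legitimate precisely because $z \in \mathbb{C}\setminus[-1,1]$. Using the generating function of the Chebyshev polynomials of the second kind, the Cauchy kernel expands as
\[
\frac{1}{z-x} = \frac{2}{u}\,\frac{1}{1 - 2xu^{-1} + u^{-2}} = 2\sum_{k=0}^{\infty} \frac{U_k(x)}{u^{k+1}},
\]
a series that converges uniformly for $x \in [-1,1]$ since $|u|>1$ while $|U_k(x)| \leq k+1$ on the interval, so the Weierstrass $M$-test applies.

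Next I would substitute this expansion into the definition \eqref{def:cauchy transform gegenbauer} of $Q_n^{(\lambda)}(z)$ and interchange summation and integration, which the uniform convergence together with the integrability of the weight justifies. This leaves
\[
Q_n^{(\lambda)}(z) = \sum_{k=0}^{\infty} \frac{1}{u^{k+1}} \int_{-1}^{1} (1-x^2)^{\lambda-\frac{1}{2}} C_n^{(\lambda)}(x) U_k(x)\, dx.
\]
At this point I would invoke a parity-and-range argument: because $(1-x^2)^{\lambda-\frac{1}{2}}$ is even while $C_n^{(\lambda)}$ and $U_k$ carry parities $(-1)^n$ and $(-1)^k$, the integral vanishes unless $k \equiv n \pmod 2$; and since the connection formula of Lemma \ref{lemma:conversion gegenbauer} expresses $U_k$ as a combination of $C_j^{(\lambda)}$ with $j \leq k$ descending in steps of two, orthogonality against $C_n^{(\lambda)}$ forces $k \geq n$. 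Hence only the indices $k = n+2m$, $m \geq 0$, survive.

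For these surviving terms the integral is exactly $h_n^{(\lambda)}\,\sigma_{n+2m,n}^{(\lambda)}$ in the notation of \eqref{eq:connection coefficients}, whose closed form was already computed in \eqref{eq:connection coefficients formula}; substituting it gives
\[
\int_{-1}^{1} (1-x^2)^{\lambda-\frac{1}{2}} C_n^{(\lambda)}(x) U_{n+2m}(x)\, dx = h_n^{(\lambda)}\, c_{n,\lambda}\, \frac{(n+1)_m (1-\lambda)_m}{(n+\lambda+1)_m\, m!}.
\]
Collecting the common factor $u^{-(n+1)}$ and summing over $m$ then assembles the Gauss hypergeometric series in the variable $u^{-2} = (z\pm\sqrt{z^2-1})^{-2}$, which is precisely the claimed formula. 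The only genuine subtlety, and the step I would document most carefully, is the termwise interchange combined with the parity/range reduction that collapses the full sum over $k$ to the even-shifted indices $n+2m$; once that is secured the remaining manipulation coincides verbatim with the one in the proof of Theorem \ref{thm:gegenbauer contour integral}, which is exactly why the explicit form of $Q_n^{(\lambda)}(z)$ falls out immediately.
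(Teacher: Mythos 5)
Your proposal is correct and follows essentially the same route as the paper's own proof: expand the Cauchy kernel via the generating function of $U_k$ after the substitution $u = z \pm \sqrt{z^2-1}$, integrate term by term, and identify the surviving integrals with $h_n^{(\lambda)}\sigma_{n+2m,n}^{(\lambda)}$ from \eqref{eq:connection coefficients} and \eqref{eq:connection coefficients formula}. The additional details you supply — the Weierstrass $M$-test justification for the interchange and the explicit parity/degree argument collapsing the sum to $k = n+2m$ — are exactly the steps the paper leaves implicit, and they are carried out correctly.
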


\begin{remark}\label{remark:asymptotic of c}
An interesting question, which is relevant for our subsequent
analysis, is the asymptotic behaviour of the constant
$c_{n,\lambda}$ as $n\rightarrow\infty$. By using the asymptotic
expansion of the ratio of gamma functions
\cite[C.4.3]{andrews2000special}
\begin{align}\label{eq:asymptotic ratio gamma function}
\frac{\Gamma(z+a)}{\Gamma(z+b)} \sim z^{a-b} \left[ 1 +
\frac{(a-b)(a+b-1)}{2z} + \mathcal{O}(z^{-2}) \right], \quad
z\rightarrow\infty,
\end{align}
we have, for fixed $\lambda$, that
\begin{align}\label{eq:asymptotic of c}
c_{n,\lambda} & = \frac{\Gamma(\lambda)
\Gamma(n+1)}{\Gamma(n+\lambda)}   \nonumber \\
& = \Gamma(\lambda) n^{1-\lambda} \left( 1 + \frac{\lambda (1 -
\lambda)}{2n} + \mathcal{O}(n^{-2}) \right) , \quad
n\rightarrow\infty.
\end{align}
Clearly, when $-\frac{1}{2} < \lambda < 1$ and $\lambda \neq 0$, the
constant $c_{n,\lambda}$ grows algebraically as $n$ grows. For
$\lambda > 1$, however, it decays algebraically as $n$ grows. On the
other hand, the situation will be greatly different if $\lambda$
varies with respect to $n$. For example, to remove the Gibbs
phenomenon, it was proposed to employ the truncated Gegenbauer
expansion by choosing $\lambda = \alpha n$ for some constant
$\alpha>0$ (see
\cite{gottlieb1995gibbs1,gottlieb1995gibbs2,gottlieb1996gibbs,gottlieb1997gibbs,gottlieb1992gibbs}
for more details). In this case, by using the asymptotic behaviour
of the gamma function, we find
\begin{align}
c_{n,\lambda} & = \left(
\frac{\alpha^{\alpha}}{(\alpha+1)^{\alpha+1}} \right)^{n} \left(
\sqrt{2\pi n \left(\frac{\alpha+1}{\alpha} \right) } +
\mathcal{O}(n^{-\frac{1}{2}} ) \right), \quad  n\rightarrow\infty.
\end{align}
Note that
\[
0< \frac{\alpha^{\alpha}}{(\alpha+1)^{\alpha+1}} < 1, \quad \alpha >
0,
\]
this implies that $c_{n,\lambda}$ decays exponentially as $n$ grows.
Meanwhile, we point out that the behaviour of $c_{n,\lambda}$ only
depends on $n$ and $\lambda$ but not on the function $f$.
\end{remark}

\section{Optimal estimates for the Gegenbauer coefficients and error bounds for the truncated Gegenbauer
expansion}\label{sec:estimate gegenbauer coefficients} An important
application of the contour integral expression is that it can be
used to establish some rigorous bounds on the rate of decay of
Gegenbauer coefficients. In this section, we shall establish optimal
and computable estimates for the Gegenbauer coefficients. Comparing
with existing studies, we show that our results are sharper.
Further, we apply these estimates to establish some error bounds of
the truncated Gegenbauer expansion in the uniform norm.
\subsection{Optimal estimates for the Gegenbauer coefficients}
The following theorem gives computable estimates for the Gegenbauer
coefficients. These estimates are optimal in the sense that
improvement in any negative power of $n$ is impossible for the $n$th
Gegenbauer coefficient $a_n^{\lambda}$.

\begin{theorem}\label{thm:decay rate gegenbauer coefficient}
Suppose that $f$ is analytic inside and on the Bernstein ellipse
$\mathcal{E}_{\rho}$ with $\rho
> 1$. When $- \frac{1}{2} < \lambda \leq 1$ and $\lambda \neq 0$, we
have
\begin{align}\label{eq:gengebauer coefficients bound I}
\left| a_n^{\lambda} \right| \leq \frac{ |c_{n,\lambda}| M
L(\mathcal{E}_{\rho} ) }{\pi \rho^{n+1}} {}_2\mathrm{
F}_1\left[\begin{matrix} n + 1, ~ 1 - \lambda;&
\\   n + \lambda + 1;  &\end{matrix} \hspace{-.25cm}  \frac{1}{ \rho^2 } \right],
\quad n \geq 0,
\end{align}
where $c_{n,\lambda}$ is defined by \eqref{eq:gegenbauer constant}
and $M = \max_{z\in \mathcal{E}_{\rho}} |f(z)|$ and
$L(\mathcal{E}_{\rho} )$ denotes the length of the circumference of
the ellipse $\mathcal{E}_{\rho}$. When $\lambda
> 1$, we have
\begin{align}\label{eq:gengebauer coefficients bound II}
\left| a_n^{\lambda} \right| \leq \frac{ c_{n,\lambda} M
L(\mathcal{E}_{\rho} ) }{\pi \rho^{n+1}} {}_2\mathrm{
F}_1\left[\begin{matrix} n + 1,~ 1 - \lambda;  &
\\  n + \lambda + 1; &\end{matrix} \hspace{-.4cm} -\frac{1}{ \rho^2 }
\right], \quad n\geq 0.
\end{align}
Finally, we point out that, apart from constant factors, the above
two bounds are optimal in the sense that these bounds on the right
hand side of \eqref{eq:gengebauer coefficients bound I} and
\eqref{eq:gengebauer coefficients bound II} can not be improved in
any negative power of $n$.
\end{theorem}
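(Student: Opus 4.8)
The plan is to estimate the contour integral \eqref{eq:contour gegenbauer coefficient} directly by the $ML$-inequality and then to control the Gauss hypergeometric factor on the contour. On $\mathcal{E}_\rho$ one has $|z\pm\sqrt{z^2-1}|=\rho$, so, setting $w=(z\pm\sqrt{z^2-1})^{-2}$, we get $|w|=\rho^{-2}$, and as $z$ runs over the ellipse $w$ sweeps out the full circle $|w|=\rho^{-2}$. Bounding $|f(z)|\le M$ and $|z\pm\sqrt{z^2-1}|^{-(n+1)}=\rho^{-(n+1)}$ yields
\[
\left|a_n^{\lambda}\right|\le\frac{|c_{n,\lambda}|\,M\,L(\mathcal{E}_\rho)}{\pi\rho^{n+1}}\max_{|w|=\rho^{-2}}\left|{}_2\mathrm{F}_1(n+1,1-\lambda;n+\lambda+1;w)\right|,
\]
so everything reduces to locating the maximum of $|{}_2\mathrm{F}_1|$ on the circle $|w|=\rho^{-2}$. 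Writing the series coefficients as $t_k=\frac{(n+1)_k(1-\lambda)_k}{(n+\lambda+1)_k\,k!}$ and noting $n+1>0$, $n+\lambda+1>0$, the sign of $t_k$ is that of $(1-\lambda)_k=\prod_{j=1}^{k}(j-\lambda)$.

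For $-\tfrac12<\lambda\le1$ (with $\lambda\ne0$) every factor $j-\lambda$ with $j\ge1$ is nonnegative, so $t_k\ge0$ for all $k$. Hence $|{}_2\mathrm{F}_1(w)|\le\sum_k t_k|w|^k={}_2\mathrm{F}_1(\rho^{-2})$, the maximum being attained at the real point $w=\rho^{-2}$; this gives \eqref{eq:gengebauer coefficients bound I} (the modulus on $c_{n,\lambda}$ accounting for $\lambda<0$, where $\Gamma(\lambda)<0$). The delicate case is $\lambda>1$: here the $t_k$ are not of one sign and, for large $k$, do not alternate either, so the naive triangle inequality overshoots. The statement encoded in \eqref{eq:gengebauer coefficients bound II} is the sharper assertion that the maximum of $|{}_2\mathrm{F}_1|$ on $|w|=\rho^{-2}$ is attained at the \emph{negative} real point, i.e. $\max_{|w|=\rho^{-2}}|{}_2\mathrm{F}_1(w)|={}_2\mathrm{F}_1(-\rho^{-2})$. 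I would prove this by passing to a representation in which the maximiser is visible: Euler's transformation gives ${}_2\mathrm{F}_1(n+1,1-\lambda;n+\lambda+1;w)=(1-w)^{2\lambda-1}\,{}_2\mathrm{F}_1(\lambda,n+2\lambda;n+\lambda+1;w)$, whose second factor has strictly positive coefficients while $|1-w|$ peaks at $w=-\rho^{-2}$. Equivalently, by Corollary~\ref{corollary:gegenbauer function second} the quantity to control is $\max_{z\in\mathcal{E}_\rho}|Q_n^{(\lambda)}(z)|$, and the claim is that this maximum sits at $z=\pm\tfrac{i}{2}(\rho-\rho^{-1})$ (the top and bottom of the ellipse, which correspond exactly to $w=-\rho^{-2}$); inserting the generating-function expansion of the Cauchy kernel used in Corollary~\ref{corollary:gegenbauer function second} and showing that the phases align for maximal modulus precisely there would close the argument. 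This max-location step is the main obstacle.

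For optimality I would exploit that the right-hand sides of \eqref{eq:gengebauer coefficients bound I}--\eqref{eq:gengebauer coefficients bound II} are, up to the factor $M L(\mathcal{E}_\rho)/\pi$, exactly the modulus of a genuine Gegenbauer coefficient. Letting $n\to\infty$ with $\lambda,\rho$ fixed, $\frac{(n+1)_k}{(n+\lambda+1)_k}\to1$ for each $k$, so the two hypergeometric factors converge to the binomial sums $\sum_k\frac{(1-\lambda)_k}{k!}(\pm\rho^{-2})^k$, namely $(1-\rho^{-2})^{\lambda-1}$ and $(1+\rho^{-2})^{\lambda-1}$ respectively; combined with $c_{n,\lambda}\sim\Gamma(\lambda)\,n^{1-\lambda}$ from Remark~\ref{remark:asymptotic of c}, both bounds are of exact order $n^{1-\lambda}\rho^{-n}$. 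To see the power $n^{1-\lambda}$ cannot be lowered, I would test against $f(x)=(z_0-x)^{-1}$ with $z_0\in\mathcal{E}_{\rho_\ast}$, $\rho_\ast>\rho$: then $a_n^{\lambda}=\frac{2}{h_n^{(\lambda)}}Q_n^{(\lambda)}(z_0)$, which by Corollary~\ref{corollary:gegenbauer function second} equals $2c_{n,\lambda}\,\rho_\ast^{-(n+1)}\,{}_2\mathrm{F}_1(n+1,1-\lambda;n+\lambda+1;w_\ast)$, with $w_\ast=\rho_\ast^{-2}$ (taking $z_0$ real, for bound I) or $w_\ast=-\rho_\ast^{-2}$ (taking $z_0=\tfrac{i}{2}(\rho_\ast-\rho_\ast^{-1})$ paired with its conjugate so that $f$ is real, for bound II). Hence $|a_n^{\lambda}|\sim(\text{const})\,n^{1-\lambda}\rho_\ast^{-n}$, carrying exactly the power $n^{1-\lambda}$ present in the bounds. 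Letting $\rho\uparrow\rho_\ast$ then shows that no factor $n^{-\varepsilon}$ with $\varepsilon>0$ can be inserted on the right-hand side without eventually violating the inequality for this $f$, which is the asserted optimality.
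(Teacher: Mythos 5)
Your reduction of \eqref{eq:contour gegenbauer coefficient} by the $ML$-inequality to the maximum of $|{}_2\mathrm{F}_1|$ on the circle $|w|=\rho^{-2}$, and your proof of \eqref{eq:gengebauer coefficients bound I} from the nonnegativity of the Taylor coefficients when $-\tfrac12<\lambda\le1$, are exactly the paper's argument (the paper leaves the positivity observation as ``easy to see''; you make it explicit). Your optimality argument is also essentially the paper's: it tests the bound against a simple pole, uses the closed form of its coefficients coming from contour deformation (cf.\ \eqref{eq:pole function gegenbauer coefficients}), and then invokes $c_{n,\lambda}\sim\Gamma(\lambda)n^{1-\lambda}$ together with the large-$n$ limit of the hypergeometric factor (Lemma \ref{lemma:asymptotic of hypergeometric}). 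Be aware, though, that both your version and the paper's gloss over the same quantifier issue: for this test function $M=M(\rho)\to\infty$ as $\rho\uparrow\rho_\ast$, so ``letting $\rho\uparrow\rho_\ast$'' requires balancing $(\rho/\rho_\ast)^{n}$ against $M(\rho)$, a balance neither write-up carries out.

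The genuine gap is the case $\lambda>1$, i.e.\ the claim that $\max_{|w|=\rho^{-2}}\bigl|{}_2\mathrm{F}_1(n+1,1-\lambda;n+\lambda+1;w)\bigr|$ is attained at $w=-\rho^{-2}$; you flag this yourself as ``the main obstacle,'' and the route you propose does not close it. Writing $F(w)=(1-w)^{2\lambda-1}G(w)$ with $G(w)={}_2\mathrm{F}_1(\lambda,n+2\lambda;n+\lambda+1;w)$ by Euler's transformation, the factor $|1-w|^{2\lambda-1}$ peaks at $w=-\rho^{-2}$ but the positive-coefficient factor $|G(w)|$ peaks at the antipodal point $w=+\rho^{-2}$; bounding each factor by its own maximum therefore only gives
\[
\max_{|w|=\rho^{-2}}|F(w)|\le\bigl(1+\rho^{-2}\bigr)^{2\lambda-1}G\bigl(\rho^{-2}\bigr),
\]
which is strictly larger than the claimed value $F(-\rho^{-2})=(1+\rho^{-2})^{2\lambda-1}G(-\rho^{-2})$, because $G$ has positive coefficients and hence $G(-\rho^{-2})<G(\rho^{-2})$. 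So this argument yields a strictly weaker inequality than \eqref{eq:gengebauer coefficients bound II}. The ingredient you are missing is Euler's \emph{integral} representation \eqref{eq:hypergeometric function}: for $\lambda>1$,
\[
{}_2\mathrm{F}_1(n+1,1-\lambda;n+\lambda+1;w)=\frac{\Gamma(n+\lambda+1)}{\Gamma(n+1)\Gamma(\lambda)}\int_0^1 t^n(1-t)^{\lambda-1}(1-wt)^{\lambda-1}\,dt,
\]
and since $\lambda-1>0$ one has $|1-wt|^{\lambda-1}\le(1+t\rho^{-2})^{\lambda-1}$ pointwise in $t\in[0,1]$ for $|w|=\rho^{-2}$, with equality when $w=-\rho^{-2}$; integrating gives $|F(w)|\le F(-\rho^{-2})$ in one line, which is the paper's proof. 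Your alternative suggestion---that the phases in the expansion of $Q_n^{(\lambda)}$ align at $z=\pm\tfrac{i}{2}(\rho-\rho^{-1})$---also cannot work termwise: for non-integer $\lambda>1$ the Taylor coefficients of $F$ are neither of one sign nor alternating (the sign of $(1-\lambda)_k$ is eventually constant after an initial alternation), so no single point on the circle aligns all phases; the location of the maximum is genuinely a consequence of the integral representation.
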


\begin{proof}
When $\lambda > -\frac{1}{2}$, from \eqref{eq:contour gegenbauer
coefficient}, it is clear that
\begin{align}\label{ineq:hypergeometric max bound}
\left| a_n^{\lambda} \right| \leq \frac{ |c_{n,\lambda}| M
L(\mathcal{E}_{\rho} ) }{\pi \rho^{n+1}} \max_{u\in
\mathcal{C}_{\rho}} \left| {}_2\mathrm{ F}_1\left[\begin{matrix} n +
1, & 1 - \lambda;
\\   n + \lambda + 1; \hspace{-1.2cm} &\end{matrix} \frac{1}{ u^2 } \right]
\right|,
\end{align}
where $\mathcal{C}_{\rho}$ denotes the circle $|u| = \rho$ and we
have used the fact that $u\in \mathcal{C}_{\rho}$ when we set $u =
z\pm \sqrt{z^2-1}$. We now turn to determine exactly where on the
circle the absolute value of the Gauss hypergeometric function takes
its maximum value. When $-\frac{1}{2} < \lambda \leq 1$ and $\lambda
\neq 0$, it is easy to see that the absolute value of the Gauss
hypergeometric function on the right hand side of above inequality
takes its maximum value at $u = \pm\rho$, this proves the inequality
\eqref{eq:gengebauer coefficients bound I}. For the case
$\lambda>1$, recall the Euler's integral representation of the Gauss
hypergeometric function (see \cite[p.~65]{andrews2000special}), we
have
\begin{align}\label{eq:hypergeometric function}
{}_2\mathrm{ F}_1\left[\begin{matrix} n + 1, & 1 - \lambda;
\\  n + \lambda + 1; \hspace{-1cm} &\end{matrix} \frac{1}{u^2}
\right] & = \frac{\Gamma(n+\lambda+1)}{\Gamma(n+1) \Gamma(\lambda)}
\int_{0}^{1} t^n (1 - t)^{\lambda-1} \left(1 -
\frac{t}{u^2}\right)^{\lambda-1} dt.
\end{align}
Hence
\begin{align}\label{ineq:hypergeometric bound}
\left| {}_2\mathrm{ F}_1\left[\begin{matrix} n + 1,~  1 - \lambda;&
\\  n + \lambda + 1;  &\end{matrix} \hspace{-.25cm} \frac{1}{u^2}
\right] \right| & \leq \frac{\Gamma(n+\lambda+1)}{\Gamma(n+1)
\Gamma(\lambda)} \int_{0}^{1} t^n (1 - t)^{\lambda-1} \left( 1 +
\frac{t}{\rho^2} \right)^{\lambda-1} dt \nonumber \\
& = {}_2\mathrm{ F}_1\left[\begin{matrix} n + 1,~  1 - \lambda; &
\\  n + \lambda + 1;  &\end{matrix} \hspace{-.4cm} -\frac{1}{\rho^2}
\right].
\end{align}
This implies that the absolute value of the Gauss hypergeometric
function takes its maximum value at $u = \pm i\rho$. This proves
\eqref{eq:gengebauer coefficients bound II}.

To prove the optimal property, we first prove the case
\eqref{eq:gengebauer coefficients bound I}. Consider the following
function
\begin{equation}\label{eq:real pole function}
f(x) = \frac{1}{x-b}, \quad b>1.
\end{equation}
Obviously, the function $f(x)$ has a simple pole at $z = b$ with
residue one. If we deform the contour of $a_n^{\lambda}$ as an
ellipse $\mathcal{E}_{R}$ in the positive direction and $R
> b + \sqrt{b^2 - 1}$, together with a sufficiently small circle with center at $z
= b$ in the negative direction and these two curves are connected by
a cross cut. Note that the integral along the cross cut vanishes, we
find that the Gegenbauer coefficient $a_n^{\lambda}$ is exactly the
difference between the contour integral on the right hand side of
\eqref{eq:contour gegenbauer coefficient} along $\mathcal{E}_{R}$
and the same integral along the small circle. Furthermore, note that
the contour integral along the ellipse $\mathcal{E}_{R}$ vanishes as
$R$ tends to infinity and the contour integral along the small
circle can be calculated explicitly by the Cauchy's integral
formula, we obtain
\begin{equation}\label{eq:pole function gegenbauer coefficients}
a_n^{\lambda} = - \frac{ 2c_{n,\lambda} }{ (b + \sqrt{b^2 -
1})^{n+1} } {}_2\mathrm{ F}_1\left[\begin{matrix} n + 1,~ 1-\lambda;
&
\\  n +\lambda + 1;  &\end{matrix} \hspace{-.25cm}  \frac{1}{ (b + \sqrt{b^2 - 1}
)^{2} } \right], \quad n \geq 0.
\end{equation}
Suppose that the bound on the right hand side of
\eqref{eq:gengebauer coefficients bound I} can be further improved
in some negative powers of $n$. Specifically, we suppose that the
Gegenbauer coefficient $a_n^{\lambda}$ further satisfies
\begin{align*}\label{ineq:pole coefficient}
\left| a_n^{\lambda} \right| \leq d_n \frac{ c_{n,\lambda} M
L(\mathcal{E}_{\rho} ) }{\pi \rho^{n+1}} {}_2\mathrm{
F}_1\left[\begin{matrix} n + 1,~ 1 - \lambda; &
\\  n + \lambda + 1;  &\end{matrix} \hspace{-.25cm}  \frac{1}{ \rho^2 } \right],
\end{align*}
where $d_n = \mathcal{O}\left( n^{-\delta} \right)$ for some
$\delta>0$. Setting $\rho = b + \sqrt{b^2 - 1} - \epsilon$ for any
$\epsilon>0$ and dividing both sides of the above inequality by the
bound of \eqref{eq:gengebauer coefficients bound I} yields
\[
\frac{2\pi}{ M L(\mathcal{E}_{\rho} ) } \left(\frac{b + \sqrt{b^2 -
1} - \epsilon}{b + \sqrt{b^2 - 1}} \right)^{n+1} \frac{ {}_2\mathrm{
F}_1\left[\begin{matrix} n + 1,~ 1-\lambda; &
\\  n +\lambda + 1;  &\end{matrix} \hspace{-.25cm} \frac{1}{ (b + \sqrt{b^2 - 1}
)^{2} } \right] }{ {}_2\mathrm{ F}_1\left[\begin{matrix} n + 1,~ 1 -
\lambda;&
\\   n + \lambda + 1;  &\end{matrix} \hspace{-.25cm}  \frac{1}{ (b + \sqrt{b^2 -
1} - \epsilon )^2 } \right] }  \leq d_n.
\]
Letting $n$ tend to infinity, note that both ${}_2\mathrm{ F}_1$ on
the left hand side of the above inequality tend to some bounded
constants (see Lemma \ref{lemma:asymptotic of hypergeometric}
below), we can always choose a sufficiently small $\epsilon$ such
that the left hand side is greater than the right hand side. This
results in an obvious contradiction. Thus, the bound
\eqref{eq:gengebauer coefficients bound I} is optimal in the sense
that improvement in any negative power of $n$ is impossible. This
proves the case \eqref{eq:gengebauer coefficients bound I}. The case
\eqref{eq:gengebauer coefficients bound II} can be handled in a
similar way.
This completes the proof.
\end{proof}

The optimal estimates in Theorem \ref{thm:decay rate gegenbauer
coefficient} may be computationally expensive since they require the
computation of the Gauss hypergeometric function. Now we present
explicit estimates for the Gegenbauer coefficients and they are
achieved by finding explicit upper bounds for the Gauss
hypergeometric function and the constant $c_{n,\lambda}$.

The following inequality will be useful.
\begin{lemma}\label{lemma:inequality ratio gamma}
Let $n\geq1$ and $a,b\in \mathbb{R}$. For $n+a>1$ and $n+b>1$, we
have
\begin{align}
\frac{\Gamma(n+a)}{\Gamma(n+b)} \leq \Upsilon_{n}^{a,b} n^{a-b},
\end{align}
where
\begin{align}
\Upsilon_{n}^{a,b} = \exp\left( \frac{a-b}{2(n+b-1)} +
\frac{1}{12(n+a-1)} + \frac{(a-1)(a-b)}{n} \right).
\end{align}
\end{lemma}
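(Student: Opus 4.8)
The plan is to take logarithms and invoke a sharp, explicitly controlled form of Stirling's formula. The shifted arguments $n+a-1$ and $n+b-1$ inside $\Upsilon_n^{a,b}$ point directly to Robbins' refinement
\[
\Gamma(z+1) = \sqrt{2\pi z}\, z^{z} e^{-z} e^{r(z)}, \qquad 0 < r(z) < \frac{1}{12 z} \quad (z>0),
\]
applied at $z = p := n+a-1$ and $z = q := n+b-1$; the hypotheses $n+a>1$, $n+b>1$ guarantee $p,q>0$. Writing $\Gamma(n+a)=\Gamma(p+1)$, $\Gamma(n+b)=\Gamma(q+1)$ and taking logarithms, I would reduce the claim to the estimate $\ln\frac{\Gamma(n+a)}{\Gamma(n+b)} - (a-b)\ln n \le \ln\Upsilon_n^{a,b}$.

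Substituting the two Stirling expressions, using $p-q=a-b$, and regrouping via $p\ln p - q\ln q - (p-q)\ln n = p\ln\frac{p}{n} - q\ln\frac{q}{n}$, the left-hand side becomes the sum of three pieces,
\[
\underbrace{\tfrac12\ln\tfrac{p}{q}}_{\mathrm{(I)}} \;+\; \underbrace{\Big(p\ln\tfrac{p}{n}-q\ln\tfrac{q}{n}-(p-q)\Big)}_{\mathrm{(II)}} \;+\; \underbrace{\big(r(p)-r(q)\big)}_{\mathrm{(III)}} .
\]
Pieces (I) and (III) should reproduce the first two terms of $\ln\Upsilon_n^{a,b}$ at once. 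For (I), the elementary bound $\ln(1+x)\le x$ gives $\tfrac12\ln\tfrac{p}{q}=\tfrac12\ln\!\big(1+\tfrac{p-q}{q}\big)\le \tfrac{p-q}{2q}=\tfrac{a-b}{2(n+b-1)}$; for (III), the one-sided bounds on Robbins' remainder give $r(p)-r(q) < r(p) < \tfrac{1}{12p}=\tfrac{1}{12(n+a-1)}$, using only $r(q)>0$.

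The substance of the argument is the middle term (II), which must be controlled by the remaining contribution $\frac{(a-1)(a-b)}{n}$. Putting $\alpha=a-1$ and $\beta=b-1$, so that $p=n+\alpha$ and $q=n+\beta$, and introducing $h(t)=(n+t)\ln(1+t/n)-t$, a short computation gives the convenient identity $h'(t)=\ln(1+t/n)$, whence $\mathrm{(II)}=h(\alpha)-h(\beta)=\int_\beta^\alpha \ln(1+s/n)\,ds$. Since $\frac{\alpha(\alpha-\beta)}{n}=\int_\beta^\alpha \frac{\alpha}{n}\,ds$, the desired inequality is equivalent to $\int_\beta^\alpha\big[\ln(1+s/n)-\tfrac{\alpha}{n}\big]\,ds\le 0$. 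When $\alpha\ge\beta$ this follows immediately from monotonicity: for every $s\le\alpha$ one has $\ln(1+s/n)\le\ln(1+\alpha/n)\le \alpha/n$, so the integrand is nonpositive throughout $[\beta,\alpha]$. Collecting (I)--(III) then yields exactly $\ln\Upsilon_n^{a,b}$, and exponentiating completes the proof.

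The step I expect to be the main obstacle is precisely this comparison in (II): the bound depends on the endpoint value $\alpha/n$ dominating $\ln(1+s/n)$ over the whole range of integration, which hinges on the ordering of the arguments. For $a\ge b$ the monotonicity argument above is clean; the reversed ordering $a<b$ cannot be handled by the same pointwise estimate and would require a separate treatment---for instance exploiting the convexity of $h$ (note $h''(t)=1/(n+t)>0$) to relocate the comparison point---so isolating and disposing of that case is where the real care is needed. Everything else reduces to repeated use of $\ln(1+x)\le x$ and the positivity of the Stirling remainder.
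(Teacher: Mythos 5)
First, a point of reference: the paper does not actually prove this lemma --- its ``proof'' is a citation to Lemma~2.1 of \cite{zhao2013sharp} --- so there is no internal argument to compare yours against, and the Robbins--Stirling route you take is the natural one. Your decomposition into (I), (II), (III) is algebraically correct, the bounds $\mathrm{(I)}\le \frac{a-b}{2(n+b-1)}$ and $\mathrm{(III)}<\frac{1}{12(n+a-1)}$ are valid for all admissible $a,b$, and your monotonicity argument for (II) is complete when $a\ge b$, so you have a correct proof in that case. The case $a<b$ that you flag as ``where the real care is needed'' is, however, a genuine gap --- and it is one that cannot be closed by convexity of $h$ or by any other device, because both the intermediate estimate $\mathrm{(II)}\le\frac{(a-1)(a-b)}{n}$ and the lemma itself are \emph{false} there. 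The mechanism is visible in your own final step: the elementary bound $\ln\bigl(1+\frac{a-1}{n}\bigr)\le\frac{a-1}{n}$ gets multiplied by the factor $a-b$, which is negative when $a<b$, so that step reverses direction.

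Concretely, take $n=1$, $a=2$, $b=3$ (so $n+a>1$ and $n+b>1$ hold). Then
\begin{equation*}
\frac{\Gamma(n+a)}{\Gamma(n+b)}=\frac{\Gamma(3)}{\Gamma(4)}=\frac13\approx 0.3333,
\qquad
\Upsilon_1^{2,3}\,n^{a-b}=\exp\Bigl(-\tfrac16+\tfrac1{24}-1\Bigr)=e^{-9/8}\approx 0.3247,
\end{equation*}
so the stated inequality fails. In your notation, $\mathrm{(II)}=h(1)-h(2)=2\ln 2-3\ln 3+1\approx-0.9096$, which exceeds the target $\frac{(a-1)(a-b)}{n}=-1$, and the slack in (I) and (III) (about $0.036$ and $0.028$ in the exponent) cannot absorb the shortfall of about $0.090$. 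Two remarks to put this in perspective. First, your integral representation in fact yields $\mathrm{(II)}\le(a-b)\ln\bigl(1+\frac{a-1}{n}\bigr)$ in \emph{both} orderings (bound the integrand by its value at the endpoint $s=\alpha$); with this as the third term of $\Upsilon_n^{a,b}$ the lemma is true for all admissible $a,b$, and the paper's form follows from it only when $a\ge b$. Second, every invocation of the lemma in this paper has $a=1$ (the factors $\Upsilon_n^{1,\lambda}$ and $\Upsilon_n^{1,3/2}$), and for $a=1$ your argument does close the remaining case for every $b$: with $\alpha=0$ one needs $\int_0^\beta\ln(1+s/n)\,ds\ge0$, which holds for either sign of $\beta$ because the integrand has the sign of $s$. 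So the paper's downstream results are unaffected, but the lemma as stated --- and any proof of it --- requires either the hypothesis $a\ge b$ or a corrected constant.
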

\begin{proof}
See \cite[Lemma~2.1]{zhao2013sharp}.
\end{proof}

We now give explicit bounds for the Gegenbauer coefficients. These
bounds depend on the parameters $n$, $\rho$ and $\lambda$
explicitly.
\begin{theorem}\label{thm:explicit estimate}
Under the same assumptions of Theorem \ref{thm:decay rate gegenbauer
coefficient}. For $\lambda>0$ and any $n\geq1$, we have the
following explicit estimates
\begin{align}\label{eq:explicit estimate}
\left| a_n^{\lambda} \right| \leq \left\{\begin{array}{cc}
                                          {\displaystyle \Lambda(n,\rho,\lambda) \left(1 - \frac{1}{\rho^2}
\right)^{\lambda-1} \frac{n^{1-\lambda}}{ \rho^{n+1}} } , & \mbox{if
$0<\lambda\leq1$}, \\ [15pt]
                                          {\displaystyle \Lambda(n,\rho,\lambda) \left( 1 + \frac{1}{\rho^2}
\right)^{\lambda-1} \frac{n^{1-\lambda}}{ \rho^{n+1}} }, & \mbox{if
$\lambda>1$}.
                                        \end{array}
                                        \right.
\end{align}
where
\begin{align}
\Lambda(n,\rho,\lambda) = \frac{\Gamma(\lambda) M
\Upsilon_n^{1,\lambda}}{\pi} \left[ 2 \left(\rho + \frac{1}{\rho}
\right) + 2 \left( \frac{\pi}{2} - 1 \right) \left(\rho -
\frac{1}{\rho} \right) \right].
\end{align}
\end{theorem}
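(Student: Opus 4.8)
The plan is to start from the optimal bounds in Theorem \ref{thm:decay rate gegenbauer coefficient} and to replace each of the three $n$- and $\rho$-dependent ingredients on their right-hand sides---the constant $c_{n,\lambda}$, the perimeter $L(\mathcal{E}_{\rho})$, and the Gauss hypergeometric function---by an explicit elementary upper bound. Since $\lambda>0$ forces $c_{n,\lambda}>0$, I may drop the absolute value and treat the two cases $0<\lambda\le1$ and $\lambda>1$ in parallel, the only difference being the argument $\pm\rho^{-2}$ of the ${}_2\mathrm{F}_1$.

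For the constant, I would apply Lemma \ref{lemma:inequality ratio gamma} with $a=1$ and $b=\lambda$ to the ratio in \eqref{eq:gegenbauer constant}; the hypotheses $n+a>1$ and $n+b>1$ hold for every $n\ge1$ and $\lambda>0$, giving $c_{n,\lambda}\le \Gamma(\lambda)\,\Upsilon_n^{1,\lambda}\,n^{1-\lambda}$, which accounts for both the factor $\Gamma(\lambda)\Upsilon_n^{1,\lambda}$ in $\Lambda$ and the algebraic factor $n^{1-\lambda}$. For the hypergeometric factor I would reuse the Euler integral representation \eqref{eq:hypergeometric function}, valid precisely because $\lambda>0$. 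In the case $0<\lambda\le1$ the exponent $\lambda-1\le0$, so the kernel $(1-t\rho^{-2})^{\lambda-1}$ is increasing in $t$ on $[0,1]$ and is maximised at $t=1$, whence it is bounded by $(1-\rho^{-2})^{\lambda-1}$; pulling this constant out and recognising the remaining integral as the Beta function $B(n+1,\lambda)=\Gamma(n+1)\Gamma(\lambda)/\Gamma(n+\lambda+1)$ cancels the prefactor exactly and yields ${}_2\mathrm{F}_1\le(1-\rho^{-2})^{\lambda-1}$. The case $\lambda>1$ is identical with the kernel $(1+t\rho^{-2})^{\lambda-1}$, now with positive exponent, again maximised at $t=1$, producing the factor $(1+\rho^{-2})^{\lambda-1}$.

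The step I expect to be the real obstacle is the explicit bound on the perimeter $L(\mathcal{E}_{\rho})$, since the ellipse circumference has no elementary closed form. Writing the semi-axes as $a=\tfrac12(\rho+\rho^{-1})$ and $b=\tfrac12(\rho-\rho^{-1})$ and parametrising, I have $L(\mathcal{E}_{\rho})=4\int_0^{\pi/2}\sqrt{a^2\sin^2t+b^2\cos^2t}\,dt$. The key elementary inequality I would establish is
\begin{equation*}
\sqrt{a^2\sin^2t+b^2\cos^2t}\le b+(a-b)\sin t,\qquad t\in[0,\tfrac{\pi}{2}],
\end{equation*}
which follows by squaring, since the difference of the two squares equals $2b(a-b)\sin t\,(1-\sin t)\ge0$ when $a\ge b>0$ and $\sin t\in[0,1]$. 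Integrating the right-hand side over $[0,\pi/2]$ gives $\tfrac{\pi}{2}b+(a-b)$, so that $L(\mathcal{E}_{\rho})\le 4a+(2\pi-4)b=2(\rho+\rho^{-1})+2(\tfrac{\pi}{2}-1)(\rho-\rho^{-1})$, which is exactly the bracketed factor appearing in $\Lambda(n,\rho,\lambda)$. As a sanity check, this bound is sharp in both the circular limit $b\to a$, where it returns $2\pi a$, and the degenerate limit $b\to0$, where it returns $4a$.

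Finally I would assemble the three estimates: substituting $c_{n,\lambda}\le\Gamma(\lambda)\Upsilon_n^{1,\lambda}n^{1-\lambda}$, the perimeter bound, and the hypergeometric bound into \eqref{eq:gengebauer coefficients bound I} and \eqref{eq:gengebauer coefficients bound II} respectively. The constant $\Gamma(\lambda)M\Upsilon_n^{1,\lambda}/\pi$ together with the perimeter bracket collect precisely into $\Lambda(n,\rho,\lambda)$, while the remaining factors are $n^{1-\lambda}\rho^{-(n+1)}$ times $(1-\rho^{-2})^{\lambda-1}$ (for $0<\lambda\le1$) or $(1+\rho^{-2})^{\lambda-1}$ (for $\lambda>1$), which is exactly \eqref{eq:explicit estimate}.
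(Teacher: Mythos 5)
Your proposal is correct, and its overall skeleton coincides with the paper's proof: both arguments obtain \eqref{eq:explicit estimate} by inserting three separate bounds into \eqref{eq:gengebauer coefficients bound I} and \eqref{eq:gengebauer coefficients bound II}, namely $c_{n,\lambda}\leq\Gamma(\lambda)\Upsilon_n^{1,\lambda}n^{1-\lambda}$ from Lemma \ref{lemma:inequality ratio gamma} (your choice $a=1$, $b=\lambda$ and the check of its hypotheses are exactly what the paper does implicitly), the bound on the Gauss hypergeometric function via the Euler integral representation \eqref{eq:hypergeometric function} (your monotonicity-of-the-kernel plus Beta-function cancellation is precisely the computation behind \eqref{eq:integral expression}, giving $(1-\rho^{-2})^{\lambda-1}$ for $0<\lambda\leq1$ and $(1+\rho^{-2})^{\lambda-1}$ for $\lambda>1$), and a bound on the perimeter $L(\mathcal{E}_\rho)$. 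The one genuine difference is the perimeter step: the paper simply cites \cite{jameson2014inequalities} for the inequality \eqref{ineq:bound ellipse}, whereas you prove it from scratch via the pointwise inequality $\sqrt{a^2\sin^2t+b^2\cos^2t}\leq b+(a-b)\sin t$ on $[0,\tfrac{\pi}{2}]$; this is correct, since the difference of squares equals $2b(a-b)\sin t\,(1-\sin t)\geq0$ for $a\geq b\geq 0$, and integrating gives $4a+(2\pi-4)b=2(\rho+\rho^{-1})+2(\tfrac{\pi}{2}-1)(\rho-\rho^{-1})$, which is exactly the bracket in $\Lambda(n,\rho,\lambda)$. What your route buys is self-containedness: the only external input left is Lemma \ref{lemma:inequality ratio gamma}, and your two limiting sanity checks ($b\to a$ and $b\to0$) recover the paper's remark that \eqref{ineq:bound ellipse} becomes an equality at $\rho=1$ and as $\rho\to\infty$.
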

\begin{proof}
From \cite[Thm.~2.2.1]{andrews2000special}, it is clear that the
Euler's integral representation of the Gauss hypergeometric function
in \eqref{eq:hypergeometric function} is valid for all $\lambda>0$.
Therefore, for any $-1<x<1$
\begin{align}\label{eq:integral expression}
{}_2\mathrm{ F}_1\left[\begin{matrix} n + 1, ~ 1 - \lambda;&
\\   n + \lambda + 1;  &\end{matrix} \hspace{-.25cm}  x \right] & = \frac{\Gamma(n+\lambda+1)}{\Gamma(n+1) \Gamma(\lambda)}
\int_{0}^{1} t^n (1 - t)^{\lambda-1} (1-xt)^{\lambda-1} dt \nonumber
\\
& \leq \left\{\begin{array}{cc}
                                          {\displaystyle (1-|x|)^{\lambda-1} } , &
\mbox{if $0<\lambda\leq1$}, \\ [5pt]
                                          {\displaystyle (1+|x|)^{\lambda-1} }, &
\mbox{if $\lambda>1$}.
                                        \end{array}
                                        \right.
\end{align}
For the constant $c_{n,\lambda}$, by means of Lemma
\ref{lemma:inequality ratio gamma} we have $c_{n,\lambda}\leq
\Gamma(\lambda) \Upsilon_n^{1,\lambda} n^{1-\lambda}$. Moreover, the
perimeter of the ellipse $\mathcal{E}_{\rho}$ satisfies (see
\cite[Thm.~5]{jameson2014inequalities})
\begin{align}\label{ineq:bound ellipse}
L(\mathcal{E}_{\rho}) \leq 2 \left(\rho + \frac{1}{\rho} \right) + 2
\left( \frac{\pi}{2} - 1 \right) \left(\rho - \frac{1}{\rho}
\right), \quad \rho\geq1,
\end{align}
where the above inequality becomes an equality if $\rho=1$ or
$\rho\rightarrow\infty$. Combining these bounds with Theorem
\ref{thm:decay rate gegenbauer coefficient} gives us the desired
results.
\end{proof}

\begin{remark}
The restriction of $\lambda>0$ in Theorem \ref{thm:explicit
estimate} is due to the use of the Euler's integral representation
of the Gauss hypergeometric function. When $-\frac{1}{2}<\lambda<0$,
numerical experiments show that for any $-1<x<1$
\begin{align*}
{}_2\mathrm{ F}_1\left[\begin{matrix} n + 1, ~ 1 - \lambda;&
\\   n + \lambda + 1;  &\end{matrix} \hspace{-.25cm}  x \right] \leq
D_n(\lambda,\rho)(1-x)^{\lambda-1},
\end{align*}
where $D_n(\lambda,\rho)\approx1$ for large $n$.
\end{remark}


One of the most important cases of Gegenbauer expansion is the
Legendre expansion which is defined by
\begin{align}\label{eq:legendre expansion}
f(x) = \sum_{n= 0}^{\infty} a_n^L P_n(x), \quad  a_n^L = \left(n +
\frac{1}{2} \right) \int_{-1}^{1} f(x) P_n(x) dx.
\end{align}
In view of the first equality of \eqref{eq:gegenbauer and legendre},
so that $a_n^{L} = a_n^{\frac{1}{2}}$ for $n \geq 0$. As a direct
consequence of Theorem \ref{thm:explicit estimate}, we derive the
following explicit estimate for the Legendre coefficients.
\begin{corollary}
Under the same assumptions of Theorem \ref{thm:decay rate gegenbauer
coefficient}. For the Legendre coefficients $a_n^L$, we have
\begin{align}\label{eq:bound for legendre coefficient}
\left| a_n^{L} \right| \leq
\frac{\Lambda(n,\rho,\frac{1}{2})}{\sqrt{\rho^2-1}}
\frac{\sqrt{n}}{\rho^{n}} , \quad n \geq 1.
\end{align}
\end{corollary}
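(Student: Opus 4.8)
The plan is to specialize Theorem~\ref{thm:explicit estimate} to the parameter value $\lambda = \tfrac{1}{2}$, at which the Gegenbauer coefficients coincide with the Legendre coefficients. First I would invoke the identity $a_n^{L} = a_n^{\frac{1}{2}}$, which is recorded just after \eqref{eq:gegenbauer and legendre} and rests on the first equality $P_n(x) = C_n^{(1/2)}(x)$ there. This reduces the task to reading off the bound for $a_n^{\lambda}$ at $\lambda = \tfrac{1}{2}$ and simplifying.

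Since $0 < \tfrac{1}{2} \leq 1$, the applicable branch is the first line of \eqref{eq:explicit estimate}, which yields
\[
\left| a_n^{\frac{1}{2}} \right| \leq \Lambda\!\left(n,\rho,\tfrac{1}{2}\right) \left(1 - \frac{1}{\rho^2}\right)^{-\frac{1}{2}} \frac{n^{\frac{1}{2}}}{\rho^{n+1}}, \qquad n \geq 1.
\]
The remaining work is purely algebraic. I would rewrite the exponent $\lambda - 1 = -\tfrac{1}{2}$ factor as
\[
\left(1 - \frac{1}{\rho^2}\right)^{-\frac{1}{2}} = \left(\frac{\rho^2 - 1}{\rho^2}\right)^{-\frac{1}{2}} = \frac{\rho}{\sqrt{\rho^2 - 1}},
\]
then absorb the surplus factor $\rho$ into $\rho^{-(n+1)}$ to produce $\rho^{-n}$, and use $n^{1/2} = \sqrt{n}$. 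Collecting the pieces collapses the right-hand side to $\Lambda(n,\rho,\tfrac{1}{2})\,\sqrt{n}\big/\big(\sqrt{\rho^2-1}\,\rho^{n}\big)$, which is precisely \eqref{eq:bound for legendre coefficient}.

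There is no genuine obstacle here, as the corollary is a direct substitution into a theorem already proved. The only point demanding care is the handling of the $\left(1 - \rho^{-2}\right)^{\lambda-1}$ factor: one must track that $\lambda - 1 = -\tfrac{1}{2}$ is a negative exponent, so that it contributes $\rho/\sqrt{\rho^2-1}$ rather than its reciprocal, and that the extra power of $\rho$ so produced cancels one factor from $\rho^{n+1}$. I would also note that $\Lambda(n,\rho,\tfrac{1}{2})$ is carried along symbolically, so no further expansion of $\Upsilon_n^{1,1/2}$ or of the perimeter bound \eqref{ineq:bound ellipse} is required.
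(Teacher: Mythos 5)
Your proposal is correct and follows the paper's own proof exactly: the paper likewise obtains \eqref{eq:bound for legendre coefficient} by setting $\lambda = \frac{1}{2}$ in \eqref{eq:explicit estimate}, using $a_n^L = a_n^{\frac{1}{2}}$, and simplifying $\left(1-\rho^{-2}\right)^{-\frac{1}{2}} = \rho/\sqrt{\rho^2-1}$ so that the extra factor of $\rho$ cancels one power in $\rho^{n+1}$. No differences worth noting.
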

\begin{proof}
It follows immediately from \eqref{eq:explicit estimate} by setting
$\lambda = \frac{1}{2}$.
\end{proof}

In \cite[Thm.~2.7]{zhao2013sharp}, the authors derived the following
bound for the Legendre coefficients
\begin{equation}\label{eq:zhao's bound for legendre}
\left| a_n^{L} \right| \leq \frac{M\sqrt{\pi n}}{\rho^n} \left( 1 +
\frac{n+2}{2n+3} \frac{1}{\rho^2 - 1} \right) \exp\left(
\frac{8n-1}{12n(2n-1)} \right), \quad n\geq 1.
\end{equation}
In the left part of Figure \ref{fig:comparison with zhao}, we
compare these two bounds for several values of $\rho$. It is clear
to see that our bound is sharper when $\rho$ is close to one. For
large values of $\rho$, we can see that both bounds are almost the
same.

\begin{remark}
In \cite[Eqn.~(2.35)]{zhao2013sharp}, an estimate for the Gegenbauer
coefficients was given. Altering their result into our setting, it
can be written explicitly as
\begin{align}\label{eq:zhao's bound for gegenbauer}
|a_n^{\lambda}| & \leq A_n^{\lambda} M \left[
\frac{\sqrt{\pi}}{2^{2\lambda-1}} +
\frac{\Gamma(\lambda+\frac{1}{2})}{\sqrt{\Gamma(2\lambda+1)}}
\frac{2\sqrt{2}}{\rho^2-1} \right] \frac{\sqrt{n}}{\rho^n},
\end{align}
where $M$ is defined as in Theorem \ref{thm:decay rate gegenbauer
coefficient} and
\[
A_n^{\lambda} = \frac{2^{4\lambda-2} \Gamma(\lambda+\frac{1}{2})
\Gamma(\lambda)^2}{\pi \Gamma(2\lambda)}
\frac{\Gamma(n+\lambda+\frac{1}{2})}{\Gamma(n+2\lambda)} \max\left\{
\Upsilon_{n}^{2\lambda,\frac{3}{2}} \Upsilon_{2n}^{2,2\lambda},
\sqrt{\frac{n+\lambda}{n}}
\sqrt{\Upsilon_{n}^{1,\lambda+\frac{1}{2}}
\Upsilon_{n}^{2\lambda,\lambda+\frac{1}{2}} } \right\}.
\]
In the right part of Figure \ref{fig:comparison with zhao}, we
present the ratio of our bound \eqref{eq:explicit estimate} to the
above bound for the case $\lambda= \frac{7}{2}$. We observe that our
bound is always superior, especially when $\rho$ is close to one.
Compared with the Legendre case, we also observe that our bound is
tighter than the above bound for large $\lambda$. In Figure
\ref{fig:comparison with zhao2} we further present a comparison of
our bound \eqref{eq:explicit estimate} to the above bound for two
larger values of $\lambda$. Clearly, we can see that the our bound
becomes increasingly sharper than the bound \eqref{eq:zhao's bound
for gegenbauer} with increasing $\lambda$.
\end{remark}


\begin{figure}[ht]
\centering
\includegraphics[width=6.8cm]{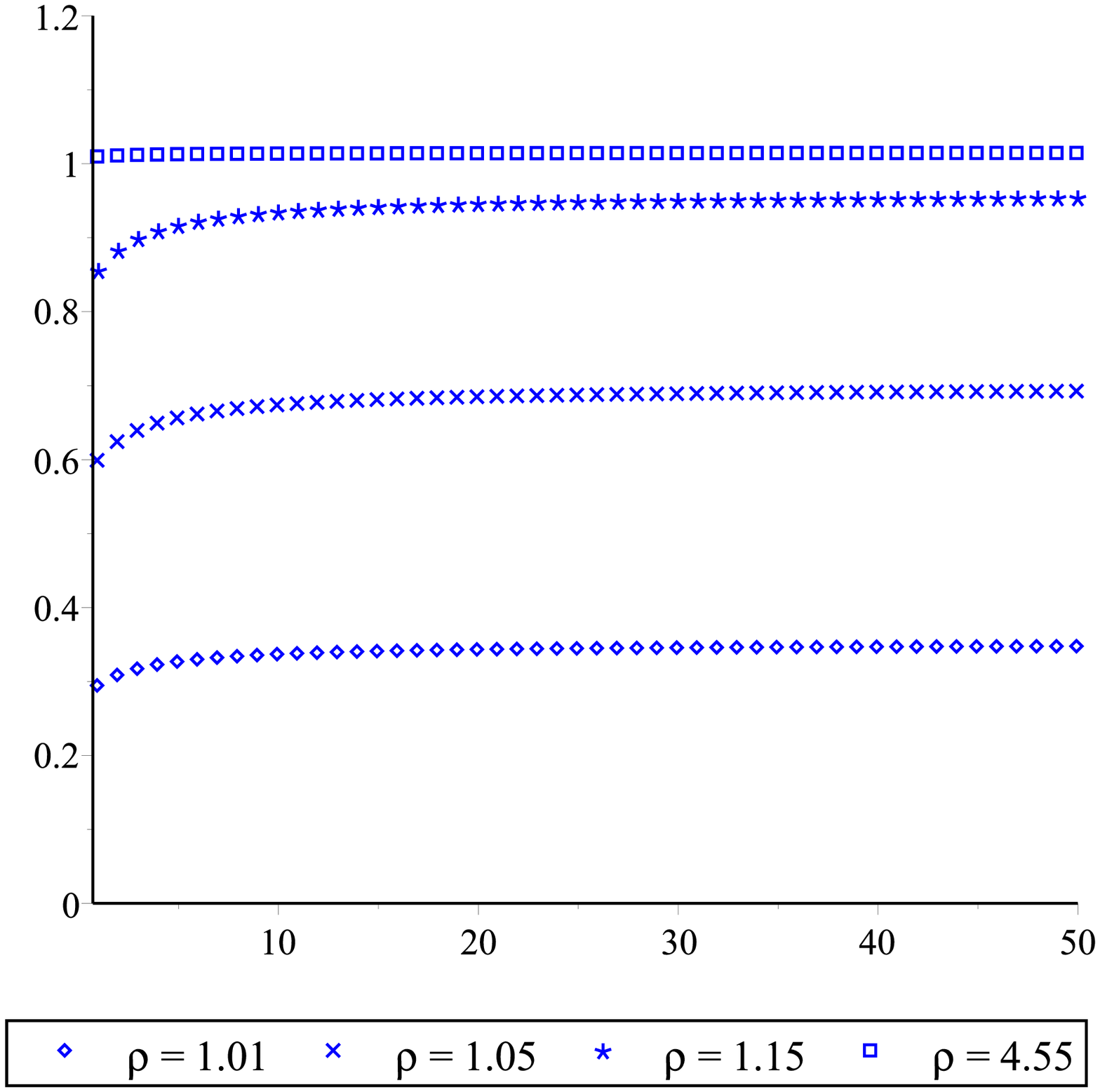}\qquad
\includegraphics[width=6.8cm]{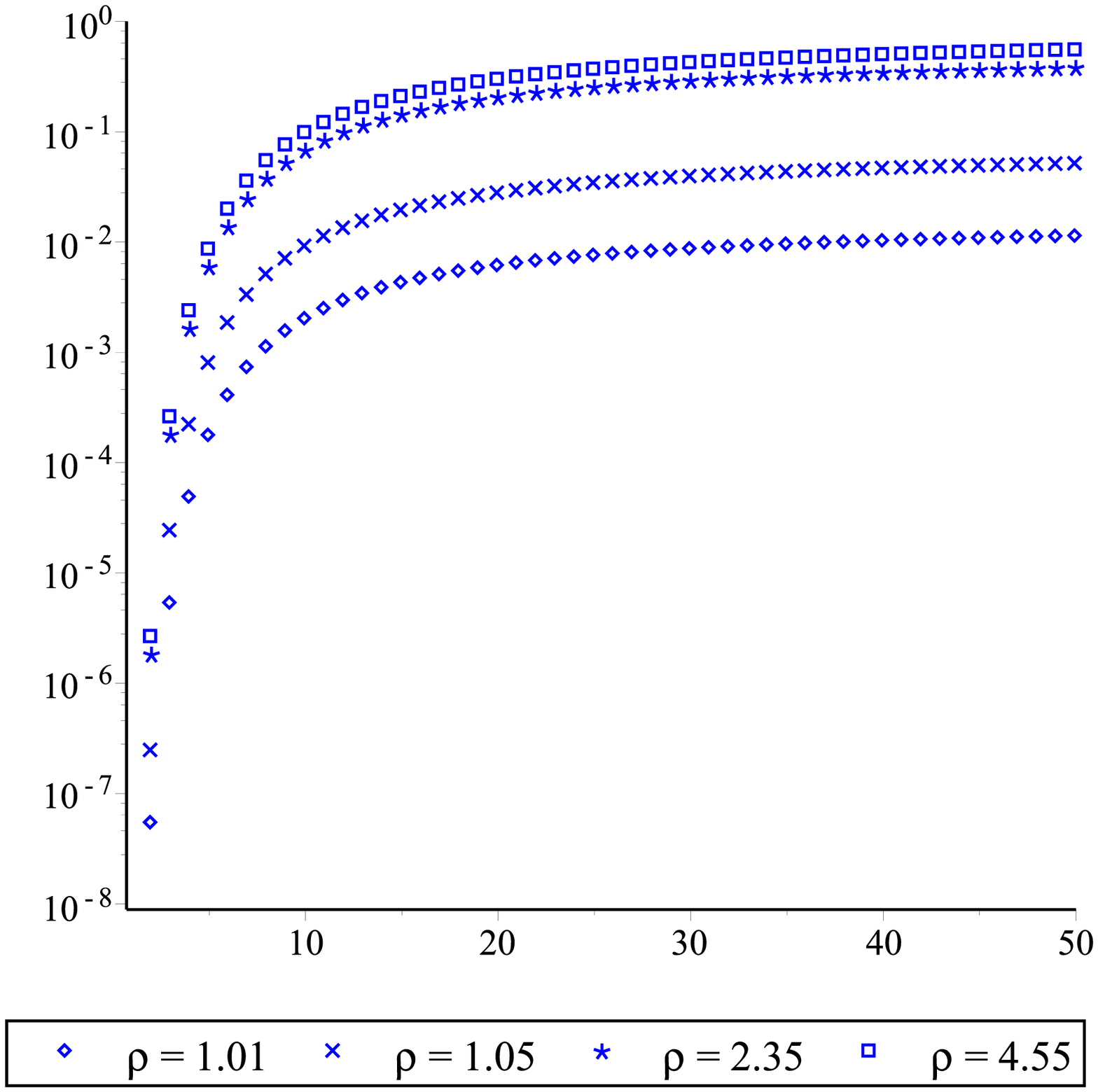}
\caption{The left panel shows the ratio of our bound \eqref{eq:bound
for legendre coefficient} to the bound \eqref{eq:zhao's bound for
legendre} for the Legendre coefficients. The right panel shows the
ratio of our bound \eqref{eq:explicit estimate} to the bound
\eqref{eq:zhao's bound for gegenbauer} for $\lambda = \frac{7}{2}$.
Here $n$ ranges from $1$ to $50$.} \label{fig:comparison with zhao}
\end{figure}

\begin{figure}[ht]
\centering
\includegraphics[width=6.8cm]{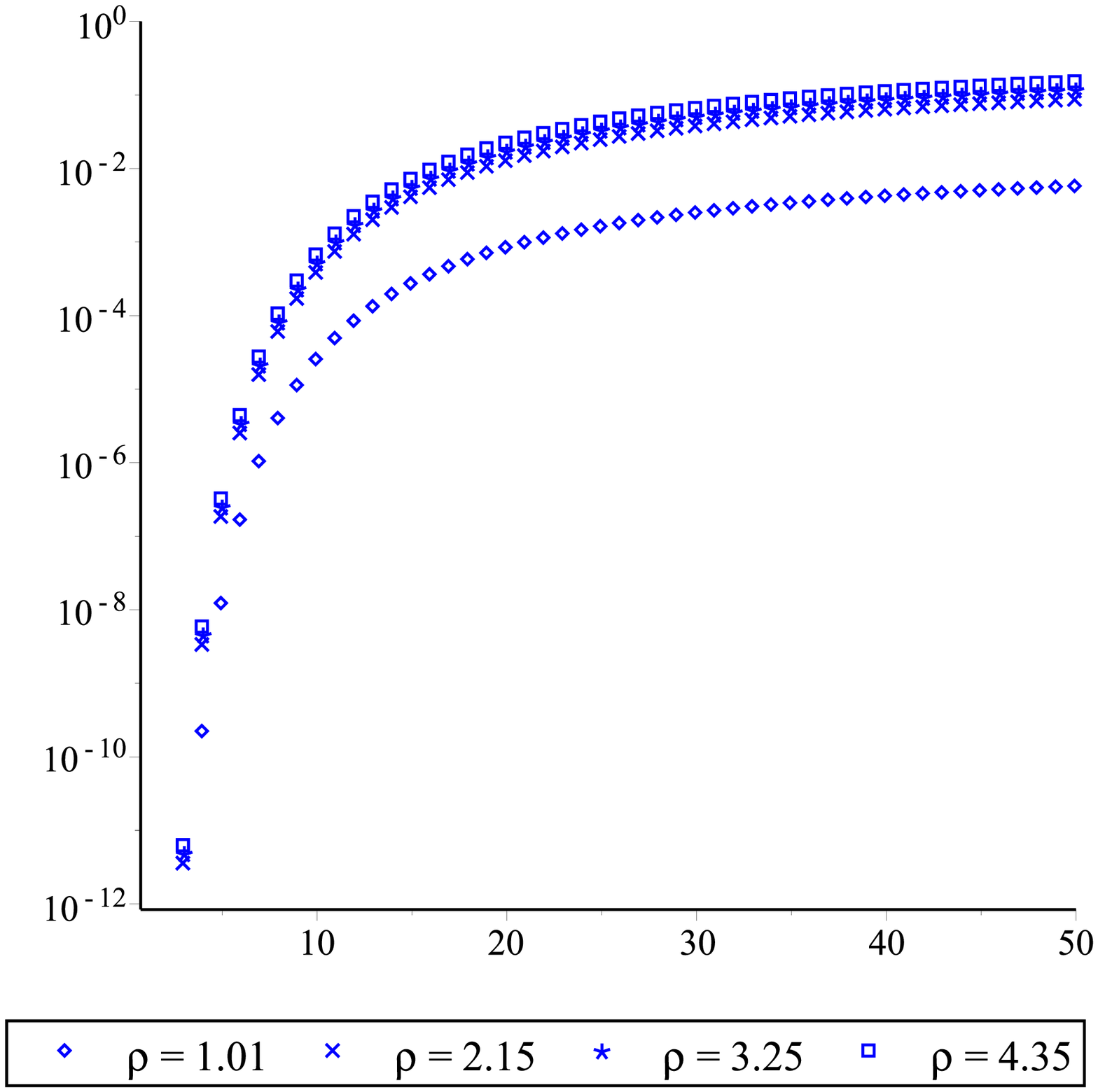}\qquad
\includegraphics[width=6.8cm]{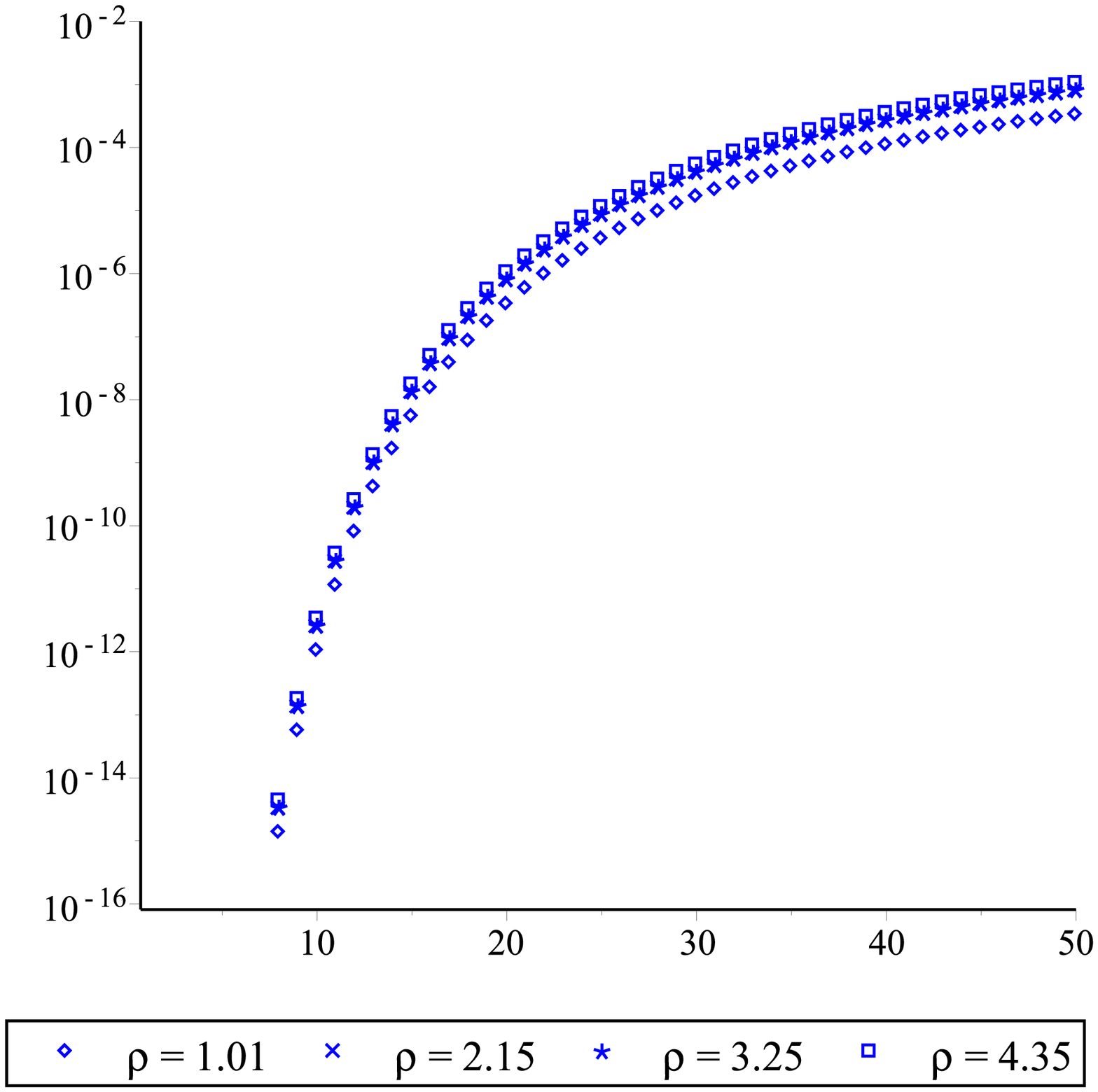}
\caption{The ratio of our bound \eqref{eq:explicit estimate} to the
bound \eqref{eq:zhao's bound for gegenbauer} for $\lambda =
\frac{11}{2}$ (left) and $\lambda=\frac{19}{2}$ (right). Here $n$
ranges from $1$ to $50$.} \label{fig:comparison with zhao2}
\end{figure}

\begin{remark}
For the function $Q_n^{(\lambda)}(z)$ which is defined in
\eqref{corollary:gegenbauer function second}, we use the proof of
Theorem \ref{thm:decay rate gegenbauer coefficient}, so that
\begin{align}\label{ineq:bound for gegenbauer transform}
\max_{z \in \mathcal{E}_{\rho}} |Q_n^{(\lambda)}(z)|
 = \left\{\begin{array}{ccc}
                                          {\displaystyle \frac{c_{n,\lambda} h_n^{(\lambda)}}{\rho^{n+1}} {}_2\mathrm{
F}_1\left[\begin{matrix} n + 1, & 1 - \lambda;
\\   n + \lambda + 1; \hspace{-1.2cm} &\end{matrix} \frac{1}{ \rho^2 } \right] },   & \mbox{$\textstyle -\frac{1}{2} < \lambda
\leq 1 $ and $\lambda \neq 0$},\\ [15pt]
                                          {\displaystyle \frac{c_{n,\lambda} h_n^{(\lambda)}}{\rho^{n+1}} {}_2\mathrm{
F}_1\left[\begin{matrix} n + 1, & 1 - \lambda;
\\   n + \lambda + 1; \hspace{-1.2cm} &\end{matrix} -\frac{1}{ \rho^2 } \right] },      & \mbox{$\lambda>1$.}
                                        \end{array}
                                        \right.
\end{align}
For the former case, the maximum value can be attained at $z = \pm
\frac{1}{2}(\rho + \rho^{-1})$. For the latter, the maximum value
can be attained at $z = \pm \frac{i}{2}(\rho - \rho^{-1})$. We
remark that \eqref{ineq:bound for gegenbauer transform} is very
useful in establishing some uniform and explicit bounds for
$Q_n^{(\lambda)}(z)$ or rigorous bounds for Gauss-Gegenbauer
quadrature (see, for example,
\cite{gautschi1983error,lederman2015analytical,zhao2013sharp}). For
example, when $\lambda = \frac{1}{2}$, $Q_n^{(\frac{1}{2})}(z)$ is
exactly the Legendre function of the second kind. As is well-known,
$Q_n^{(\frac{1}{2})}(x)$ is strictly monotonically decreasing on
$(1,\infty)$. Therefore, for $x>1+\delta$ and $\delta>0$, by setting
$\frac{1}{2}(\rho + \rho^{-1}) = 1 + \delta$ and using Lemma
\ref{lemma:inequality ratio gamma} and \eqref{eq:integral
expression}, we have
\begin{align}\label{eq:bound for legendre second}
|Q_n^{(\frac{1}{2})}(x) | < Q_n^{(\frac{1}{2})}(1+\delta) =
\sqrt{\pi} \frac{ \Gamma(n+1) }{ \Gamma(n+\frac{3}{2}) }
{}_2\mathrm{ F}_1\left[\begin{matrix} n + 1, & \frac{1}{2};
\\   n + \frac{3}{2}; \hspace{-0.25cm} &\end{matrix} \frac{1}{ \hat{\delta}^2 } \right] \frac{1}{
\hat{\delta}^{n+1}} \leq \frac{\sqrt{\pi}
\Upsilon_{n}^{1,\frac{3}{2}}}{\hat{\delta}^{n}
\sqrt{n(\hat{\delta}^2-1)}},
\end{align}
where $\hat{\delta} = 1 + \delta + \sqrt{(1+\delta)^2 - 1}$. In a
very recent paper of Lederman and Rokhlin
\cite[Lemma~3.1]{lederman2015analytical}, the authors provided the
following bound for $Q_n^{(\frac{1}{2})}(x)$,
\begin{equation}\label{eq:bound of rokhlin}
|Q_n^{(\frac{1}{2})}(x) | < \left( \log\left( 2
\frac{1+\tilde{\delta}}{\tilde{\delta}} \right) + 1 \right) \left(
\frac{1}{1+\tilde{\delta}} \right)^{n+1},
\end{equation}
where $\tilde{\delta} = \sqrt{(1 + \delta)^2 - 1}$. Figure
\ref{fig:comparison with rokhlin} illustrates the comparison of the
above two bounds. Clearly, we can see that our bound \eqref{eq:bound
for legendre second} is much sharper, especially when $n$ is large.
\end{remark}

\begin{figure}[ht]
\centering
\includegraphics[width=6.8cm]{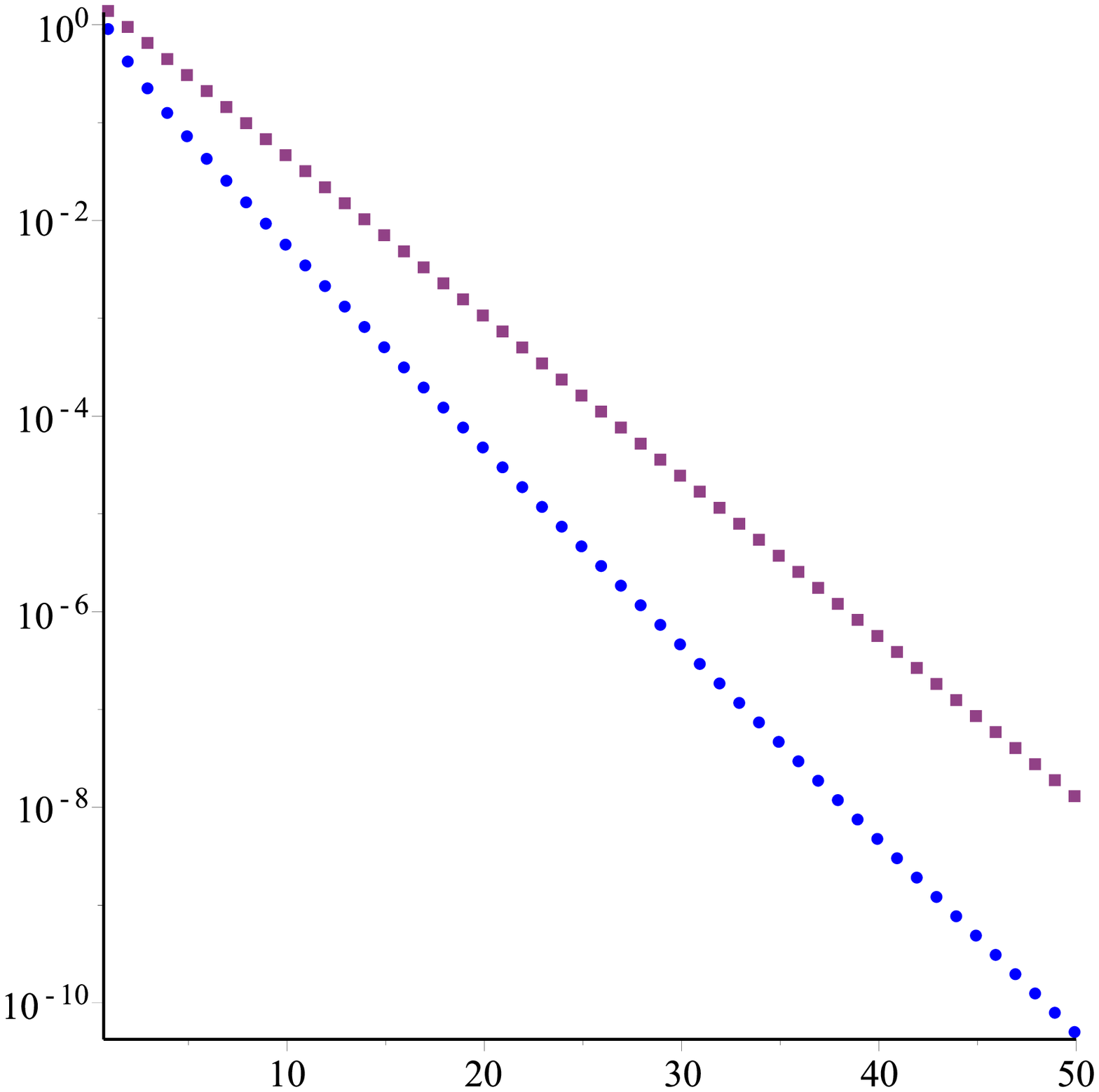}\qquad
\includegraphics[width=6.8cm]{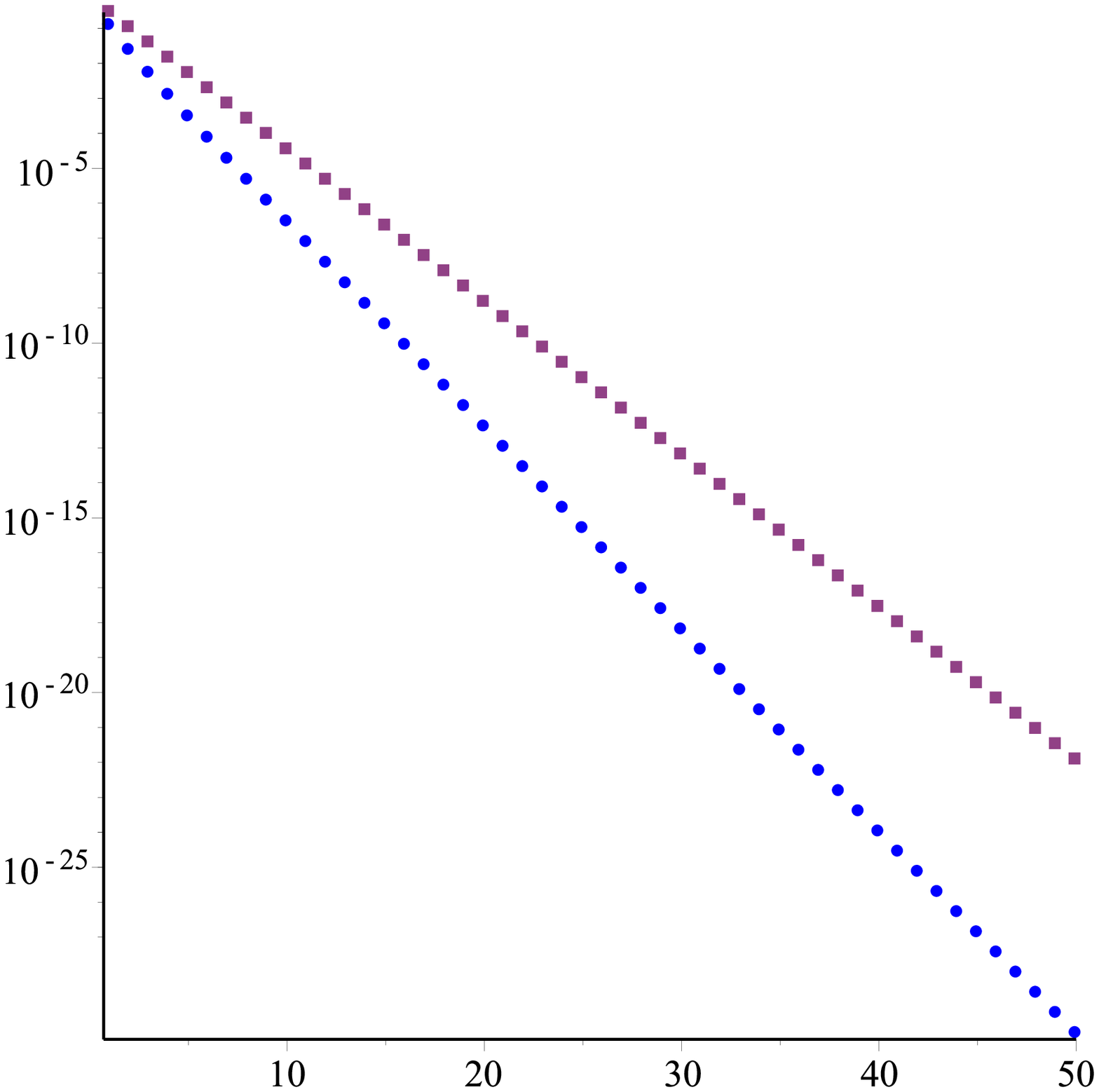}
\caption{Comparison of our bound \eqref{eq:bound for legendre
second} (dots) with the bound \eqref{eq:bound of rokhlin} (boxes)
for $\delta = 0.1$ (left) and $\delta = 1$ (right). Here $n$ ranges
from $1$ to $50$.} \label{fig:comparison with rokhlin}
\end{figure}

\subsection{Error bounds for truncated Gegenbauer expansion}
Having established bounds for the Gegenbauer coefficients, we can
immediately derive error bounds for the truncated Gegenbauer
expansion. Let
\begin{equation*}
f_N(x) = \sum_{n=0}^{N-1} a_n^{\lambda} C_n^{(\lambda)}(x).
\end{equation*}
We now give the first main result of this section.
\begin{theorem}\label{thm:error bound gegenbauer expansion}
Suppose that $f$ is analytic inside and on the Bernstein ellipse
$\mathcal{E}_{\rho}$ with $\rho
> 1$. When $0 < \lambda \leq 1$, the error of the truncated Gegenbauer expansion can be
bounded by
\begin{align}\label{ineq:bound gegenbauer series one}
\left| f(x) - f_N(x) \right| \leq  \frac{ M L(\mathcal{E}_{\rho})
\Gamma(\lambda) }{\pi \Gamma(2\lambda)} \sum_{n=N}^{\infty}
\frac{\Gamma(n+2\lambda)}{\Gamma(n+\lambda) \rho^{n+1}} {}_2\mathrm{
F}_1\left[\begin{matrix} n + 1,~ 1 - \lambda; &
\\  n + \lambda + 1;  &\end{matrix} \hspace{-.25cm} \frac{1}{ \rho^2 }
\right],
\end{align}
where $M$ and $L(\mathcal{E}_{\rho} )$ are defined as in Theorem
\ref{thm:decay rate gegenbauer coefficient}. In particular, when
$\lambda = 1$, the above bound can be given explicitly by
\begin{equation}\label{ineq:bound gegenbauer series two}
| f(x) - f_N(x) | \leq \frac{ M L(\mathcal{E}_{\rho}) }{\pi \rho^{N}
} \left( \frac{N(\rho - 1) + \rho }{ (\rho - 1)^2 } \right).
\end{equation}
When $\lambda
> 1$, the error of the truncated Gegenbauer series can be bounded by
\begin{align}\label{ineq:bound gegenbauer series three}
\left| f(x) - f_N(x) \right| \leq \frac{ M L(\mathcal{E}_{\rho})
\Gamma(\lambda) }{\pi \Gamma(2\lambda)} \sum_{n=N}^{\infty}
\frac{\Gamma(n+2\lambda)}{\Gamma(n+\lambda) \rho^{n+1}} {}_2\mathrm{
F}_1\left[\begin{matrix} n + 1, ~ 1 - \lambda; &
\\  n + \lambda + 1;  &\end{matrix} \hspace{-.4cm} -\frac{1}{ \rho^2 }
\right].
\end{align}
\end{theorem}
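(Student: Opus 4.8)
The plan is to majorize the tail $f(x)-f_N(x)=\sum_{n=N}^{\infty}a_n^{\lambda}C_n^{(\lambda)}(x)$ term by term, feeding in the coefficient estimates of Theorem~\ref{thm:decay rate gegenbauer coefficient} together with the pointwise bound \eqref{eq:gegenbauer inequality}. Because the hypothesis here is $\lambda>0$, the inequality $|C_n^{(\lambda)}(x)|\le C_n^{(\lambda)}(1)$ is available for every $|x|\le1$ (this is precisely why the range $-\tfrac12<\lambda<0$ is excluded from the statement). Hence the triangle inequality gives, uniformly in $x\in[-1,1]$,
\[
|f(x)-f_N(x)|\le\sum_{n=N}^{\infty}\bigl|a_n^{\lambda}\bigr|\,C_n^{(\lambda)}(1),
\]
and I would then insert the normalization \eqref{eq:normalization gegenbauer}, namely $C_n^{(\lambda)}(1)=\Gamma(n+2\lambda)/(n!\,\Gamma(2\lambda))$, along with the relevant coefficient bound.

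The step that makes everything telescope cleanly is a single gamma-function identity. Using \eqref{eq:gegenbauer constant} and $\Gamma(n+1)=n!$,
\[
c_{n,\lambda}\,C_n^{(\lambda)}(1)=\frac{\Gamma(\lambda)\,n!}{\Gamma(n+\lambda)}\cdot\frac{\Gamma(n+2\lambda)}{n!\,\Gamma(2\lambda)}=\frac{\Gamma(\lambda)}{\Gamma(2\lambda)}\,\frac{\Gamma(n+2\lambda)}{\Gamma(n+\lambda)}.
\]
For $0<\lambda\le1$ one has $|c_{n,\lambda}|=c_{n,\lambda}$, so substituting bound \eqref{eq:gengebauer coefficients bound I} and summing over $n\ge N$ reproduces \eqref{ineq:bound gegenbauer series one} immediately; substituting \eqref{eq:gengebauer coefficients bound II} instead gives \eqref{ineq:bound gegenbauer series three} verbatim, the sole difference being the argument $-1/\rho^2$ in the hypergeometric factor. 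Convergence of both tail series is not an issue: the ${}_2\mathrm{F}_1$ factors are bounded uniformly in $n$ (by $(1-\rho^{-2})^{\lambda-1}$ and $(1+\rho^{-2})^{\lambda-1}$ respectively, via the Euler representation recorded in \eqref{eq:integral expression}), while $\Gamma(n+2\lambda)/\Gamma(n+\lambda)=O(n^{\lambda})$ by \eqref{eq:asymptotic ratio gamma function}, which is overwhelmed by the geometric decay $\rho^{-n}$.

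For the explicit case \eqref{ineq:bound gegenbauer series two} I would specialize \eqref{ineq:bound gegenbauer series one} to $\lambda=1$. Then $(1-\lambda)_k=(0)_k$ vanishes for all $k\ge1$, so the ${}_2\mathrm{F}_1$ truncates to its $k=0$ term and equals $1$; moreover $\Gamma(\lambda)/\Gamma(2\lambda)=1$ and $\Gamma(n+2)/\Gamma(n+1)=n+1$. The bound collapses to
\[
|f(x)-f_N(x)|\le\frac{M\,L(\mathcal{E}_{\rho})}{\pi}\sum_{n=N}^{\infty}\frac{n+1}{\rho^{n+1}}.
\]
The one genuine computation is the closed form of this arithmetic–geometric tail: writing $r=1/\rho$ and shifting to $m=n+1$, I would use $\sum_{m\ge M}m\,r^{m}=r\frac{d}{dr}\bigl(r^{M}/(1-r)\bigr)=r^{M}\bigl(M-(M-1)r\bigr)/(1-r)^2$ with $M=N+1$, then simplify via $1-r=(\rho-1)/\rho$ to obtain $\rho^{-N}\bigl(N(\rho-1)+\rho\bigr)/(\rho-1)^2$, which is precisely \eqref{ineq:bound gegenbauer series two}.

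I do not anticipate a real obstacle: the whole argument is termwise majorization plus the gamma bookkeeping above. The only places wanting a little care are the exact summation of the arithmetic–geometric series in the $\lambda=1$ case and checking that the collapse of the hypergeometric factor to $1$ there is justified (through the vanishing of $(1-\lambda)_k$) rather than merely asserted.
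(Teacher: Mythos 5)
Your proposal is correct and follows essentially the same route as the paper's own proof: termwise majorization of the tail via the inequality $|C_n^{(\lambda)}(x)|\le C_n^{(\lambda)}(1)$, substitution of the coefficient bounds \eqref{eq:gengebauer coefficients bound I} and \eqref{eq:gengebauer coefficients bound II} together with the normalization \eqref{eq:normalization gegenbauer}, and for $\lambda=1$ the collapse of the ${}_2\mathrm{F}_1$ factor to $1$ followed by explicit summation of $\sum_{n\ge N}(n+1)\rho^{-(n+1)}$. Your added remarks (justifying the hypergeometric truncation via $(0)_k=0$, checking convergence of the tail series, and carrying out the arithmetic--geometric summation in detail) merely make explicit steps the paper leaves implicit.
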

\begin{proof}
Using \eqref{eq:gegenbauer inequality} we have
\begin{align}\label{eq:bound of gegenbauer series}
\left| f(x) - f_N(x) \right| &= \left| \sum_{n = N}^{\infty}
a_n^{\lambda} C_n^{(\lambda)}(x) \right| \leq \sum_{n = N}^{\infty}
\left| a_n^{\lambda} \right| C_n^{(\lambda)}(1).
\end{align}
When $0 < \lambda \leq 1$, by applying \eqref{eq:normalization
gegenbauer} and \eqref{eq:gengebauer coefficients bound I}, one
finds
\begin{align*}
\left| f(x) - f_N(x) \right| & \leq \sum_{n = N}^{\infty} \frac{
c_{n,\lambda} M L(\mathcal{E}_{\rho} ) }{\pi \rho^{n+1}}
{}_2\mathrm{ F}_1\left[\begin{matrix} n + 1,~  1 - \lambda; &
\\  n + \lambda + 1;  &\end{matrix} \hspace{-.25cm} \frac{1}{ \rho^2 } \right]
C_n^{(\lambda)}(1) \nonumber \\
& = \frac{ M L(\mathcal{E}_{\rho}) \Gamma(\lambda) }{\pi
\Gamma(2\lambda)} \sum_{n=N}^{\infty}
\frac{\Gamma(n+2\lambda)}{\Gamma(n+\lambda) \rho^{n+1}} {}_2\mathrm{
F}_1\left[\begin{matrix} n + 1,~ 1 - \lambda; &
\\   n + \lambda + 1;  &\end{matrix} \hspace{-.25cm} \frac{1}{ \rho^2 }
\right].
\end{align*}
This proves \eqref{ineq:bound gegenbauer series one}. When $\lambda
= 1$, note that the Gauss hypergeometric function on the right hand
side of the last equation is reduced to one and the above inequality
can be further simplified as
\begin{align*}
\left| f(x) - f_N(x) \right| & \leq \frac{ M L(\mathcal{E}_{\rho})
}{\pi } \sum_{n=N}^{\infty} \frac{n+1}{\rho^{n+1}} \nonumber \\
&=\frac{ M L(\mathcal{E}_{\rho}) }{\pi \rho^{N} } \left(
\frac{N(\rho - 1) + \rho }{ (\rho - 1)^2 } \right).
\end{align*}
This proves \eqref{ineq:bound gegenbauer series two}. Similarly, the
case $\lambda > 1$ can be proved with the use of
\eqref{eq:gengebauer coefficients bound II} and \eqref{eq:bound of
gegenbauer series}. This completes the proof.

\end{proof}


The bound for the case $\lambda>1$ is very useful for analyzing the
convergence of the diagonal Gegenbauer approximation (see, e.g.,
\cite{boyd2005trouble}). In what follows, we consider to establish a
new upper bound which is simpler but surely less sharp than
\eqref{ineq:bound gegenbauer series three}.

\begin{theorem}\label{thm:gegenbauer simple bound}
Under the same assumptions of Theorem \ref{thm:error bound
gegenbauer expansion}. If $\lambda>1$ and $\rho >
\frac{(N+2\lambda)(N+\lambda+1)}{(N+\lambda)(N+1)}$, then
\begin{equation}\label{ineq:bound gegenbauer series four}
\left| f(x) - f_N(x) \right| \leq \frac{ \Gamma(\lambda)
\Gamma(N+2\lambda) }{ \Gamma(2\lambda) \Gamma(N+\lambda) }
\left(1+\frac{1}{\rho^2} \right)^{\lambda-1} \frac{C}{\rho^N} .
\end{equation}
where $C$ is defined by
\[
C = \frac{M L(\mathcal{E}_{\rho}) }{\pi}
 \left( \frac{
(N+\lambda)(N+1) }{ \rho(N+\lambda)(N+1) - (N+2\lambda)(N+\lambda+1)
} \right).
\]
\end{theorem}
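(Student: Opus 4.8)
The plan is to start from the series bound \eqref{ineq:bound gegenbauer series three} of Theorem \ref{thm:error bound gegenbauer expansion} and to dominate it by a geometric series. Write the $n$th summand as
$$s_n = \frac{\Gamma(n+2\lambda)}{\Gamma(n+\lambda)\,\rho^{n+1}}\,{}_2\mathrm{F}_1\!\left[n+1,\,1-\lambda;\,n+\lambda+1;\,-\tfrac{1}{\rho^2}\right],$$
so that \eqref{ineq:bound gegenbauer series three} reads $|f(x)-f_N(x)|\le \frac{M L(\mathcal{E}_\rho)\Gamma(\lambda)}{\pi\Gamma(2\lambda)}\sum_{n=N}^\infty s_n$. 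The whole argument reduces to bounding the ratio $s_{n+1}/s_n$ by a single constant $r<1$ and then summing; the hypothesis on $\rho$ will turn out to be exactly the condition $r<1$.

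The key step is to estimate the ratio of consecutive hypergeometric factors via Euler's integral representation \eqref{eq:integral expression}. With $x=-1/\rho^2$ it gives
$${}_2\mathrm{F}_1\!\left[n+1,\,1-\lambda;\,n+\lambda+1;\,-\tfrac{1}{\rho^2}\right]=\frac{\Gamma(n+\lambda+1)}{\Gamma(n+1)\Gamma(\lambda)}\,I_n,\qquad I_n:=\int_0^1 t^n(1-t)^{\lambda-1}\Big(1+\tfrac{t}{\rho^2}\Big)^{\lambda-1}\,dt.$$
Since $t^{n+1}\le t^n$ on $(0,1)$ we have $I_{n+1}\le I_n$, so the hypergeometric factor grows by at most $(n+\lambda+1)/(n+1)$ when $n$ is incremented. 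Multiplying this by the elementary gamma ratio $(n+2\lambda)/(n+\lambda)$ and the factor $1/\rho$ coming from $\rho^{-(n+1)}$ yields
$$\frac{s_{n+1}}{s_n}\le \frac{(n+2\lambda)(n+\lambda+1)}{\rho\,(n+\lambda)(n+1)}.$$

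I would then check that the right-hand side decreases in $n$; writing it as $\rho^{-1}\big(1+\lambda(2n+2\lambda+1)/[(n+\lambda)(n+1)]\big)$ reduces this to the monotonicity of a single rational function, whose derivative has a manifestly negative numerator for $\lambda>0$ and $n\ge1$. Hence the supremum over $n\ge N$ is attained at $n=N$, giving $s_{n+1}/s_n\le r$ for all $n\ge N$ with $r=\frac{(N+2\lambda)(N+\lambda+1)}{\rho(N+\lambda)(N+1)}$, and the stated hypothesis $\rho>\frac{(N+2\lambda)(N+\lambda+1)}{(N+\lambda)(N+1)}$ is precisely $r<1$. Therefore $s_n\le s_N\,r^{\,n-N}$ and $\sum_{n\ge N}s_n\le s_N/(1-r)$. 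Bounding the leading hypergeometric factor in $s_N$ by the constant $(1+1/\rho^2)^{\lambda-1}$ (again from \eqref{eq:integral expression}, valid for $\lambda>1$) and using $\frac{1}{\rho^{N+1}(1-r)}=\frac{1}{\rho^N}\cdot\frac{(N+\lambda)(N+1)}{\rho(N+\lambda)(N+1)-(N+2\lambda)(N+\lambda+1)}$, then reinstating the prefactor $\frac{M L(\mathcal{E}_\rho)\Gamma(\lambda)}{\pi\Gamma(2\lambda)}$, reproduces exactly the asserted estimate with the stated constant $C$.

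The main obstacle I anticipate is the ratio estimate for the hypergeometric factors: proving $I_{n+1}\le I_n$ cleanly (immediate from the integral form) and, more delicately, confirming that the combined ratio $(n+2\lambda)(n+\lambda+1)/[\rho(n+\lambda)(n+1)]$ is monotone so that its maximum sits at $n=N$. Once this single controlling ratio $r$ is secured, everything else is a geometric summation and routine algebraic simplification, and the deliberate use of the cruder factor $(n+\lambda+1)/(n+1)$ is what makes the resulting bound simpler, at the cost of sharpness, than \eqref{ineq:bound gegenbauer series three}.
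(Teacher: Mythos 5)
Your proposal is correct and follows essentially the same route as the paper's proof: the paper also defines the summand $H(n)$, bounds $H(n+1)/H(n)$ by $\frac{(n+2\lambda)(n+\lambda+1)}{\rho(n+\lambda)(n+1)}$ via Euler's integral representation, dominates the tail by a geometric series with ratio $r=\frac{(N+2\lambda)(N+\lambda+1)}{\rho(N+\lambda)(N+1)}$, and finishes with the bound ${}_2\mathrm{F}_1 \le (1+\rho^{-2})^{\lambda-1}$. The only difference is cosmetic: you explicitly verify the monotonicity in $n$ of the ratio bound (which the paper asserts without proof), a worthwhile detail but not a different argument.
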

\begin{proof}
Define
\begin{align}\label{def:auxiliary function}
H(n) = \frac{\Gamma(n+2\lambda)}{\Gamma(n+\lambda) \rho^{n+1}}
{}_2\mathrm{ F}_1\left[\begin{matrix} n + 1, ~ 1 - \lambda; &
\\  n + \lambda + 1;  &\end{matrix} \hspace{-.4cm} -\frac{1}{ \rho^2 }
\right].
\end{align}
Then, we have for $n\geq N$ that
\[
\frac{H(n+1)}{H(n)} = \frac{n+2\lambda}{\rho( n+\lambda)} \frac{
{}_2\mathrm{ F}_1\left[\begin{matrix} n + 2, ~ 1 - \lambda; &
\\  n + \lambda + 2;  &\end{matrix} \hspace{-.4cm} -\frac{1}{ \rho^2 }
\right] }{ {}_2\mathrm{ F}_1\left[\begin{matrix} n + 1, ~ 1 -
\lambda; &
\\  n + \lambda + 1;  &\end{matrix} \hspace{-.4cm} -\frac{1}{ \rho^2 }
\right] }.
\]
Applying \eqref{eq:hypergeometric function} to the ratio of Gauss
hypergeometric functions yields
\begin{align*}
\frac{ {}_2\mathrm{ F}_1\left[\begin{matrix} n + 2, ~ 1 - \lambda; &
\\  n + \lambda + 2;  &\end{matrix} \hspace{-.4cm} -\frac{1}{ \rho^2 }
\right] }{ {}_2\mathrm{ F}_1\left[\begin{matrix} n + 1, ~ 1 -
\lambda; &
\\  n + \lambda + 1;  &\end{matrix} \hspace{-.4cm} -\frac{1}{ \rho^2 }
\right] } & = \frac{n+\lambda+1}{n+1} \frac{ \int_{0}^{1}
t^{n+1}(1-t)^{1-\lambda} (1+\frac{t}{\rho^2})^{\lambda-1}dt }{
\int_{0}^{1} t^{n}(1-t)^{1-\lambda}
(1+\frac{t}{\rho^2})^{\lambda-1}dt } \leq \frac{n+\lambda+1}{n+1}.
\end{align*}
Therefore,
\[
\frac{H(n+1)}{H(n)} \leq \frac{( n+2\lambda) (n+\lambda+1) }{ \rho
(n+\lambda)(n+1) } \leq \frac{( N+2\lambda) (N+\lambda+1) }{ \rho(
N+\lambda)(N+1) }.
\]
Owing to the assumption on $\rho$, we see that the last bound is
less than one. Combining this with \eqref{ineq:bound gegenbauer
series three} gives
\begin{align*}
|f(x) - f_N(x)| & \leq \frac{ M L(\mathcal{E}_{\rho})
\Gamma(\lambda) }{\pi \Gamma(2\lambda)} \sum_{n=N}^{\infty} H(n) \nonumber \\
& \leq \frac{ M L(\mathcal{E}_{\rho}) \Gamma(\lambda) }{\pi
\Gamma(2\lambda)} H(N) \sum_{m=0}^{\infty} \left( \frac{(
N+2\lambda) (N+\lambda+1) }{ \rho(
N+\lambda)(N+1) } \right)^m \\
&= \frac{ M L(\mathcal{E}_{\rho}) \Gamma(\lambda) }{\pi
\Gamma(2\lambda)} \frac{ H(N) \rho
(N+\lambda)(N+1)}{\rho(N+\lambda)(N+1)-(N+2\lambda)(N+\lambda+1)}.
\nonumber
\end{align*}
Combining this with \eqref{eq:integral expression} gives the desired
result. This completes the proof.
\end{proof}

To overcome the Gibbs phenomenon, a Gegenbauer reconstruction method
was developed in
\cite{gottlieb1995gibbs1,gottlieb1995gibbs2,gottlieb1996gibbs,gottlieb1997gibbs,gottlieb1992gibbs}
and this method required to analyze the convergence of the diagonal
Gegenbauer approximation, i.e., $\lambda$ depends linearly on $N$.
We also refer the reader to \cite{boyd2005trouble} for further
discussion. In this case, our bound \eqref{ineq:bound gegenbauer
series four} provides a rigorous and computable bound.
\begin{corollary}\label{corollary:bound for Shu's method}
If $\lambda = \gamma N>1$ and $\gamma$ is a positive constant and
$\rho>\frac{(1+2\gamma)( (1+\gamma) N + 1)}{(1+\gamma)(N+1)}$, then
the error of the truncated Gegenbauer series can be bounded by
\begin{equation}\label{ineq:bound for Shu's method}
|f(x) - f_N(x)| \leq \frac{ \Gamma(\gamma N) \Gamma((1+2\gamma)N)}{
\Gamma(2\gamma N) \Gamma((1+\gamma)N)} \left(1+\frac{1}{\rho^2}
\right)^{\gamma N-1} \frac{\tilde{C}}{\rho^N},
\end{equation}
where
\[
\tilde{C} = \frac{ M L(\mathcal{E}_{\rho}) }{\pi} \left( \frac{
(1+\gamma)(N+1) }{ \rho(1+\gamma)(N+1) - (1+2\gamma)( (1+\gamma)N +
1 ) } \right).
\]
\end{corollary}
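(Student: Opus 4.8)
The plan is to derive the corollary as a direct specialization of Theorem~\ref{thm:gegenbauer simple bound}, taking $\lambda=\gamma N$. First I would note that for each fixed $N$ the number $\lambda=\gamma N$ is simply one admissible value of the parameter $\lambda>1$ in that theorem; the geometric-series argument used to prove Theorem~\ref{thm:gegenbauer simple bound} is carried out for a single fixed pair $(\lambda,N)$ and nowhere requires $\lambda$ to stay bounded as $N$ grows. Consequently the theorem applies without modification, and the remaining work is purely the algebraic bookkeeping of the substitution.

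The second step is to check that the admissibility condition on $\rho$ transcribes correctly. Substituting $\lambda=\gamma N$ gives $N+2\lambda=(1+2\gamma)N$, $N+\lambda+1=(1+\gamma)N+1$, and $N+\lambda=(1+\gamma)N$, so that
\[
\frac{(N+2\lambda)(N+\lambda+1)}{(N+\lambda)(N+1)}
=\frac{(1+2\gamma)N\,\bigl((1+\gamma)N+1\bigr)}{(1+\gamma)N\,(N+1)}
=\frac{(1+2\gamma)\bigl((1+\gamma)N+1\bigr)}{(1+\gamma)(N+1)},
\]
which is exactly the threshold stated in the corollary. Thus the hypothesis $\rho>\tfrac{(1+2\gamma)((1+\gamma)N+1)}{(1+\gamma)(N+1)}$ coincides with the hypothesis required by Theorem~\ref{thm:gegenbauer simple bound}.

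Finally I would substitute $\lambda=\gamma N$ into the conclusion. The gamma-function prefactor $\tfrac{\Gamma(\lambda)\Gamma(N+2\lambda)}{\Gamma(2\lambda)\Gamma(N+\lambda)}$ becomes $\tfrac{\Gamma(\gamma N)\Gamma((1+2\gamma)N)}{\Gamma(2\gamma N)\Gamma((1+\gamma)N)}$, and the factor $(1+\rho^{-2})^{\lambda-1}$ becomes $(1+\rho^{-2})^{\gamma N-1}$, matching the asserted bound. For the constant, I would insert $(N+\lambda)(N+1)=(1+\gamma)N(N+1)$ and $(N+2\lambda)(N+\lambda+1)=(1+2\gamma)N((1+\gamma)N+1)$ into the definition of $C$ and cancel the common factor $N$ from numerator and denominator, which yields precisely $\tilde{C}$. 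I do not expect any genuine obstacle here: the statement is an instance of the preceding theorem and the proof is entirely this verification. The only point meriting a word of care is that the $\rho$-threshold is strictly larger than $1$, so that the bound is non-vacuous; this holds since the numerator minus the denominator of the threshold equals $2\gamma(1+\gamma)N+\gamma>0$ for every $\gamma>0$ and $N\geq1$.
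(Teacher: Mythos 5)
Your proposal is correct and follows exactly the paper's route: the paper likewise proves this corollary by setting $\lambda=\gamma N$ in Theorem~\ref{thm:gegenbauer simple bound}, with the threshold on $\rho$ and the constant $\tilde{C}$ arising from the same cancellation of the common factor $N$ that you carry out. Your added checks (that the theorem needs no uniformity in $\lambda$ as $N$ grows, and that the threshold exceeds $1$) are sound but not part of the paper's one-line verification.
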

\begin{proof}
If follows directly from Theorem \ref{thm:gegenbauer simple bound}
by setting $\lambda = \gamma N$.
\end{proof}

\begin{example}
We illustrate the error bound \eqref{ineq:bound for Shu's method}
for the function $f(x) = \frac{1}{x-2}$. It is easy to check that
this function is analytic inside the ellipse $\mathcal{E}_{\rho}$
with $1<\rho<2+\sqrt{3}\approx 3.732$ and
\[
\max_{z\in \mathcal{E}_{\rho}}|f(z)| \leq M =  \frac{2\rho}{(
2+\sqrt{3}-\rho )( \rho-(2-\sqrt{3}) )}.
\]
We compare our bound \eqref{ineq:bound for Shu's method} with the
maximum error of the truncated Gegenbauer series
\[
\max_{x\in[-1,1]}|f(x) - f_N(x)|
\] which is measured at $1000$
equispaced points in $[-1,1]$. In our computations, we choose $\rho
= 3.6$ and the perimeter of the ellipse is evaluated by the
following formula \cite[Eqn.~(19.9.9)]{olver2010nist}
\begin{equation}\label{eq:length of ellipse}
L(\mathcal{E}_{\rho} ) = \frac{4 }{\epsilon} E(\epsilon),
\end{equation}
where $\epsilon = \frac{2}{\rho + \rho^{-1}}$ and $E(z)$ is the
complete ellipse integral of the second kind, although we remark
that simple approximation formulas are also available (see, for
example, \cite[Eqn.~(19.9.10)]{olver2010nist}). We test two values
$\gamma = 0.25$ and $\gamma=0.125$ and it is easy to check that
these values of $\rho$ and $\gamma$ satisfy the assumptions of
Corollary \ref{corollary:bound for Shu's method} when $N \geq 8$.
All computations were performed using {\sc Maple} with $100$ digits
arithmetic. Numerical results are presented in Figure
\ref{fig:comparison with exact maximum error}, which show that our
bound is tight.
\end{example}

\begin{figure}[ht]
\centering
\includegraphics[width=6.8cm]{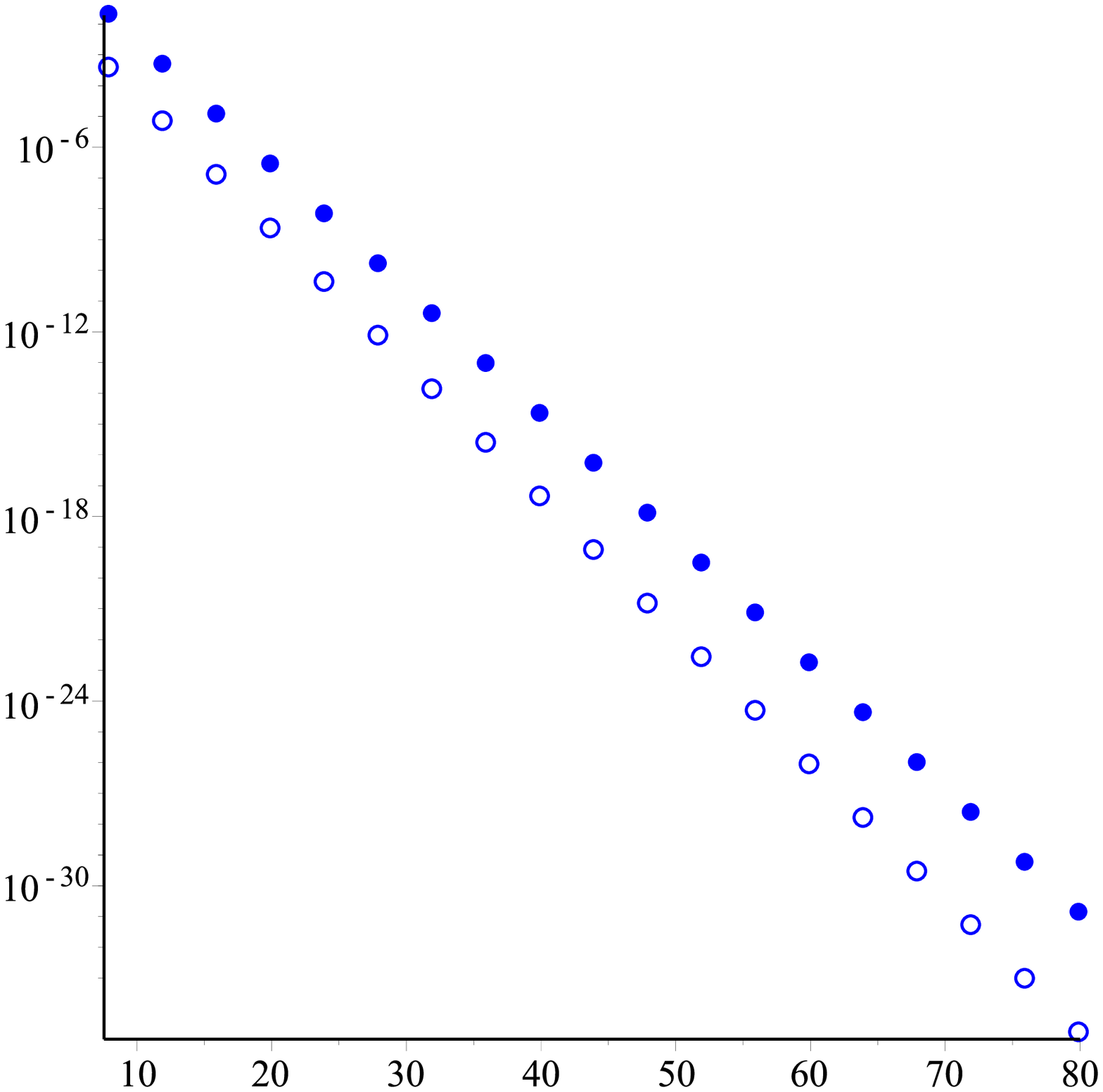}\qquad
\includegraphics[width=6.8cm]{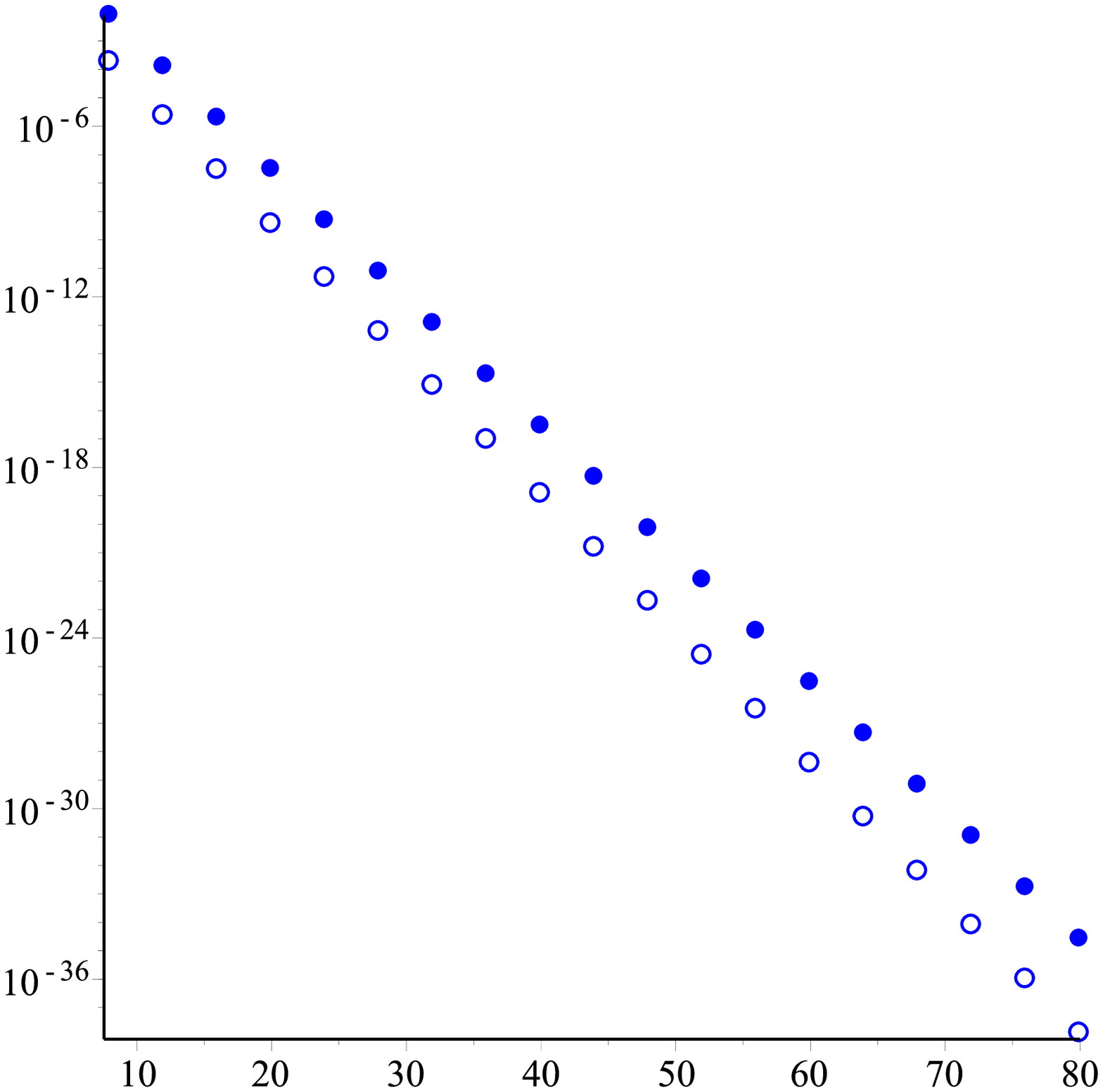}
\caption{Comparison of our bound \eqref{ineq:bound for Shu's method}
(dots) with the maximum error (circles) for $\gamma = 0.25$ (left)
and $\gamma = 0.125$ (right). Here $N$ ranges from $8$ to $80$.}
\label{fig:comparison with exact maximum error}
\end{figure}

\begin{remark}
It was demonstrated in \cite{boyd2005trouble} that the singularities
of $f(x)$ in the complex plane can ruin the convergence of the
diagonal Gegenbauer approximation method unless the constant
$\gamma$ is sufficiently small. This means that, to ensure the
convergence of the diagonal Gegenbauer approximation method, there
should be a lower bound on $\rho$ for each $\gamma$. The condition
of $\rho$ in Corollary \ref{corollary:bound for Shu's method}
provides a lower bound.
\end{remark}

\begin{remark}
For each fixed $\gamma$, the error bound \eqref{ineq:bound for Shu's
method} can be further optimized as a function of $\rho$.
\end{remark}


\section{A comparison of Legendre and Chebyshev
coefficients}\label{sec:comparison} The most commonly used cases of
Gegenbauer expansion are the Chebyshev and Legendre expansions
which, up to normalization, correspond to the special cases $\lambda
= 0$ and $\lambda = \frac{1}{2}$, respectively. More specifically,
the Legendre expansion is defined in \eqref{eq:legendre expansion}
and the Chebyshev expansion of the first kind is defined by
\begin{align}\label{eq:chebyshev expansion first}
f(x) = \sum_{n = 0}^{\infty}{'} a_n^C T_n(x), \quad  a_n^C =
\frac{2}{\pi} \int_{-1}^{1} \frac{f(x) T_n(x) }{\sqrt{1 - x^2}} dx,
\end{align}
where the prime indicates that the first term of the sum is halved.

For both expansions, one particularly interesting question is the
comparison of the decay rates of the Chebyshev and Legendre
coefficients (see
\cite{boyd2013relationships,fox1968chebyshev,lanczos1952introduction,wang2012convergence}).
For few decades, a myth on this issue is the ``Lanczos-Fox-Parker"
proposition which states that the Chebyshev coefficient $a_n^C$
decays approximately $\sqrt{n\pi}/2$ faster than the Legendre
coefficient $a_n^L$ for large $n$. This proposition was first
advocated by Lanczos in \cite{lanczos1952introduction} and later by
Fox and Parker in their monograph \cite[p.~17]{fox1968chebyshev}.
The original deviation of this proposition was based on the use of
Rodrigues' formula of orthogonal polynomials and repeated
integration by parts, which is not rigorous enough (see
\cite[p.~130]{mason2003chebyshev}). Recently, Boyd and Petschek in
\cite{boyd2013relationships} considered this issue and showed that
the ``Lanczos-Fox-Parker" proposition is not true for the following
concrete examples
\begin{equation*}\label{eq:three excepts}
f(x) = \left\{\begin{array}{ccc}
                                          {\displaystyle \exp(r T_m(x))},   & \mbox{$\textstyle  r>0$ and
                                          $m$ is a large integer},\\ [6pt]
                                          {\displaystyle \frac{1}{r - x}},   & \mbox{$r > 1$},
                                          \\ [12pt]
                                          {\displaystyle (1-x)^{\phi}},      & \mbox{$\phi>0$ is not an integer.}
                                        \end{array}
                                        \right.
\end{equation*}
Even these counterexamples were given, however, a precise result on
the comparison of Legendre and Chebyshev coefficients is still
nontrivial. In this section, inspired by these exceptions, we
analyze this issue based on the contour integral expression of
Gegenbauer coefficients. To this aim, we define the ratio of the
$n$th Legendre coefficient to the $n$th Chebyshev coefficient as
\begin{equation}
\gamma_n = \frac{ a_n^L }{ a_n^C }, \quad  a_n^C \neq 0, \quad n
\geq 0.
\end{equation}
We shall discuss the asymptotic behaviour of $\gamma_n$.

Now we give the relation between the Gegenbauer coefficients and the
Chebyshev and Legendre coefficients which will be used in our
analysis.
\begin{lemma}\label{lemma:legendre and chebyshev}
Suppose that $a_n^L$, $a_n^C$ and $a_n^{\lambda}$ denote the
Legendre, Chebyshev and Gegenbauer coefficients of the the same
function $f(x)$, which are defined by \eqref{eq:legendre expansion},
\eqref{eq:chebyshev expansion first}, and \eqref{eq:gegenbauer
coefficients}, respectively. Then $a_n^L = a_n^{\frac{1}{2}}$ for
each $n \geq 0$, and
\begin{align*}\label{eq:chebyshev and gegenbauer}
a_n^C = \frac{2}{n} \lim_{\lambda\rightarrow0^{+}} \lambda
a_n^{\lambda}, \quad n \geq 1,
\end{align*}
and $a_0^C = 2 \lim_{\lambda\rightarrow0^{+}} a_0^{\lambda}$.
\end{lemma}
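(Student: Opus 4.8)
The plan is to establish the two relations by directly comparing the integral definitions of the coefficients, exploiting the limiting relations between Gegenbauer polynomials and the classical Legendre and Chebyshev polynomials that are already recorded in Section~2. For the Legendre case, the statement $a_n^L = a_n^{\frac{1}{2}}$ is immediate: by the first identity in \eqref{eq:gegenbauer and legendre} we have $P_n(x) = C_n^{(1/2)}(x)$, and the weight $(1-x^2)^{\lambda-1/2}$ reduces to the constant weight $1$ when $\lambda = \frac{1}{2}$. Comparing the definition \eqref{eq:gegenbauer coefficients} of $a_n^{\lambda}$ at $\lambda = \frac{1}{2}$ with the Legendre coefficient in \eqref{eq:legendre expansion}, the only thing to verify is that the normalizing constant agrees, i.e.\ that $1/h_n^{(1/2)} = n + \frac{1}{2}$; this follows from substituting $\lambda = \frac{1}{2}$ into \eqref{eq:normalization constant} and simplifying the resulting gamma quotient.

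For the Chebyshev relation, the strategy is to take the limit $\lambda \to 0^+$ inside the integral representation of $\lambda\, a_n^{\lambda}$. First I would write out $\lambda\, a_n^{\lambda}$ explicitly from \eqref{eq:gegenbauer coefficients}, namely
\begin{equation*}
\lambda\, a_n^{\lambda} = \frac{\lambda}{h_n^{(\lambda)}} \int_{-1}^{1} (1-x^2)^{\lambda - \frac{1}{2}} f(x)\, C_n^{(\lambda)}(x)\, dx,
\end{equation*}
and then factor the integrand as $\lambda^{-1} C_n^{(\lambda)}(x)$ multiplied by $\lambda^2/h_n^{(\lambda)}$. The key analytic input is the limiting formula \eqref{eq:gegenbauer and chebyshev first}, which gives $\lim_{\lambda\to0^+}\lambda^{-1} C_n^{(\lambda)}(x) = \frac{2}{n} T_n(x)$ for $n\geq 1$, together with the fact that the weight tends to $(1-x^2)^{-1/2}$. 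It then remains to compute $\lim_{\lambda\to0^+}\lambda^2/h_n^{(\lambda)}$ using \eqref{eq:normalization constant}: the factor $\Gamma(\lambda)^2$ in the denominator of $h_n^{(\lambda)}$ behaves like $\lambda^{-2}$ as $\lambda\to0^+$, so $\lambda^2/h_n^{(\lambda)}$ converges to a finite nonzero constant that I would evaluate by setting $\lambda=0$ in the remaining gamma factors. Collecting the constants should reproduce precisely the factor $\frac{2}{\pi}$ appearing in the Chebyshev coefficient \eqref{eq:chebyshev expansion first}, yielding $\frac{2}{n}\lim_{\lambda\to0^+}\lambda\, a_n^{\lambda} = a_n^C$.

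The $n=0$ case must be handled separately, since \eqref{eq:gegenbauer and chebyshev first} applies only for $n\geq 1$ and $C_0^{(0)}(x)=1$ without any $\lambda^{-1}$ scaling; here one examines $\lim_{\lambda\to0^+} a_0^{\lambda}$ directly, using $C_0^{(\lambda)}(x)=1$ and the limit of $1/h_0^{(\lambda)}$, and matches it against $a_0^C$ as defined by \eqref{eq:chebyshev expansion first}. The main obstacle I anticipate is the bookkeeping of the gamma-function constants in the limit $\lambda\to0^+$: one must track the $\Gamma(\lambda)^{-2}\sim\lambda^2$ cancellation against the explicit $\lambda$ (or $\lambda^2$) prefactors carefully, and confirm that the surviving $2^{1-2\lambda}\pi$ and $\Gamma(n+2\lambda)/(\Gamma(n+1)(n+\lambda))$ terms combine to the correct rational multiple of $\pi^{-1}$. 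A minor technical point is justifying the interchange of limit and integral, which is routine given that $f$ is continuous on $[-1,1]$ and the scaled polynomials converge uniformly, but it should be noted for completeness.
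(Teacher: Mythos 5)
Your proposal is correct and follows essentially the same route as the paper's (much terser) proof: the Legendre case via $P_n(x)=C_n^{(1/2)}(x)$ from \eqref{eq:gegenbauer and legendre} together with $1/h_n^{(1/2)}=n+\tfrac12$, and the Chebyshev case via the limit \eqref{eq:gegenbauer and chebyshev first} with the $\Gamma(\lambda)^{2}\sim\lambda^{-2}$ cancellation in \eqref{eq:normalization constant}, treating $n=0$ separately through $C_0^{(0)}(x)=1$. The only difference is that you carry out explicitly the constant bookkeeping and the limit--integral interchange that the paper leaves implicit, which is a sound elaboration rather than a different argument.
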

\begin{proof}
The Legendre case follows from \eqref{eq:gegenbauer and legendre}.
As for the Chebyshev case, it follows directly from
\eqref{eq:gegenbauer and chebyshev first} for $n \geq 1$ and the
fact that $C_0^{(0)}(x) = 1$ for $n = 0$.
\end{proof}

For simplicity of presentation, we assume that $f(z)$ has a single
singularity such as a pole or branch point at $z = z_0$ in the
complex plane, although it is not difficult to extend our results to
the case that $f(z)$ has finite numbers of singularities. We
distinguish three cases $z_0 \in \mathbb{C}\setminus [-1,1]$, $z_0 =
\pm 1$ and $z_0 \in (-1,1)$.

\subsection{The case $z_0 \in \mathbb{C}\setminus [-1,1]$}
Before embarking on our analysis, we first introduce some helpful
lemmas.
\begin{lemma}\label{lemma:asymptotic legendre constant}
We have
\begin{equation}
c_{n,\frac{1}{2}} = \sqrt{n \pi} \left( 1 + \frac{1}{8n} +
\mathcal{O}(n^{-2}) \right), \quad n\rightarrow\infty.
\end{equation}
\end{lemma}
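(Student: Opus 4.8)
The plan is to recognize this as a direct specialization of the asymptotic expansion already recorded in Remark \ref{remark:asymptotic of c}. Recall from \eqref{eq:gegenbauer constant} that $c_{n,\lambda} = \Gamma(\lambda)\Gamma(n+1)/\Gamma(n+\lambda)$, so that $c_{n,\frac{1}{2}} = \sqrt{\pi}\,\Gamma(n+1)/\Gamma(n+\frac{1}{2})$ upon using $\Gamma(\frac{1}{2}) = \sqrt{\pi}$. The whole statement therefore reduces to the large-$n$ behaviour of the ratio $\Gamma(n+1)/\Gamma(n+\frac{1}{2})$, for which I would invoke the asymptotic expansion of the ratio of gamma functions \eqref{eq:asymptotic ratio gamma function}.

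Concretely, first I would apply \eqref{eq:asymptotic ratio gamma function} with $z = n$, $a = 1$ and $b = \frac{1}{2}$. The leading exponent is $a - b = \frac{1}{2}$, giving the factor $n^{1/2} = \sqrt{n}$, while the first subleading coefficient is $(a-b)(a+b-1)/2 = (\frac{1}{2})(\frac{1}{2})/2 = \frac{1}{8}$. Hence $\Gamma(n+1)/\Gamma(n+\frac{1}{2}) = \sqrt{n}\,\bigl(1 + \frac{1}{8n} + \mathcal{O}(n^{-2})\bigr)$, and multiplying by $\Gamma(\frac{1}{2}) = \sqrt{\pi}$ yields $c_{n,\frac{1}{2}} = \sqrt{n\pi}\,\bigl(1 + \frac{1}{8n} + \mathcal{O}(n^{-2})\bigr)$, which is precisely the claim. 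Equivalently, one may simply set $\lambda = \frac{1}{2}$ in the already-derived expansion \eqref{eq:asymptotic of c}: the coefficient $\lambda(1-\lambda)/2$ appearing there evaluates to $\frac{1}{8}$, reproducing the same result without rederiving anything.

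Since everything follows by substituting fixed numerical values into a formula quoted earlier, there is no genuine obstacle here; the only point requiring any care is the correct bookkeeping of the parameters $(a,b)$ (or $\lambda$) and verifying that the subleading arithmetic indeed produces $\frac{1}{8}$ rather than, say, the sign or factor errors that commonly creep into such specializations. I would also note that \eqref{eq:asymptotic ratio gamma function} is an asymptotic expansion in the Poincar\'e sense, so the $\mathcal{O}(n^{-2})$ remainder is inherited directly and no separate error estimate is needed.
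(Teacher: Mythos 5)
Your proposal is correct and follows exactly the paper's own route: the paper proves this lemma by citing Remark \ref{remark:asymptotic of c}, i.e., by specializing the gamma-ratio expansion \eqref{eq:asymptotic ratio gamma function} (equivalently \eqref{eq:asymptotic of c}) to $\lambda = \frac{1}{2}$, with $\Gamma(\frac{1}{2}) = \sqrt{\pi}$ and $\lambda(1-\lambda)/2 = \frac{1}{8}$. Your parameter bookkeeping $(a,b)=(1,\frac{1}{2})$ and the resulting coefficient $\frac{1}{8}$ are both correct.
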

\begin{proof}
This is a direct consequence of Remark \ref{remark:asymptotic of c}.
\end{proof}

\begin{lemma}\label{lemma:asymptotic of hypergeometric}
For fixed $x \neq 1$, we have
\begin{align}
{}_2\mathrm{ F}_1\left[\begin{matrix} a + \lambda,~  b; &
\\  c + \lambda; &\end{matrix} \hspace{-.25cm} x \right] = (1 - x)^{-b} \left[ 1 + \frac{b(c-a)}{c+\lambda} \left( \frac{x}{1-x} \right) + \mathcal{O}(\lambda^{-2})
\right],
\end{align}
as $ \lambda\rightarrow\infty$.
\end{lemma}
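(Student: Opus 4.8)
The plan is to work directly from the defining power series of the Gauss hypergeometric function,
\[
{}_2\mathrm{F}_1\left[\begin{matrix} a + \lambda, & b; \\ c + \lambda; \hspace{-1cm} & \end{matrix} x \right] = \sum_{k=0}^{\infty} \frac{(a+\lambda)_k (b)_k}{(c+\lambda)_k \, k!} x^k,
\]
and to expand the $\lambda$-dependent Pochhammer ratio for large $\lambda$. Writing $\frac{(a+\lambda)_k}{(c+\lambda)_k} = \prod_{j=0}^{k-1} \bigl(1 + \frac{a-c}{c+\lambda+j}\bigr)$ and summing the logarithm gives $\frac{(a+\lambda)_k}{(c+\lambda)_k} = 1 + \frac{(a-c)\,k}{c+\lambda} + \mathcal{O}(\lambda^{-2})$. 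Substituting this into the series and collecting the two leading orders reduces everything to the two elementary sums $\sum_{k\ge 0} \frac{(b)_k}{k!} x^k = (1-x)^{-b}$ and $\sum_{k\ge 0} \frac{(b)_k}{k!} k\, x^k = b x (1-x)^{-b-1}$, the second being $x \frac{d}{dx}$ of the first. Factoring out $(1-x)^{-b}$ then produces the bracketed expansion whose first-order term is proportional to $\frac{1}{c+\lambda}\,\frac{x}{1-x}$ with numerator $b(a-c)$; the only delicate point at this stage is keeping the sign of $a-c$ straight when passing from the product to its first-order expansion, which pins down the displayed constant.

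The step I expect to be the main obstacle is the rigorous justification that the term-by-term expansion is legitimate, that is, that the aggregated remainder is genuinely $\mathcal{O}(\lambda^{-2})$ rather than merely $o(1)$ termwise. The subtlety is that the expansion of $\frac{(a+\lambda)_k}{(c+\lambda)_k}$ is only accurate when $k \ll \lambda$ and degrades once $k$ is comparable to $\lambda$. For $|x| < 1$ this is harmless: the weight $\frac{(b)_k}{k!}|x|^k$ decays geometrically, so I would bound the tail $k \gtrsim \lambda$ by an exponentially small quantity and control the head using the uniform estimate $\bigl| \frac{(a+\lambda)_k}{(c+\lambda)_k} - 1 - \frac{(a-c)k}{c+\lambda} \bigr| \le C\,k^2/\lambda^2$ valid for $k \le \lambda^{1/2}$, say. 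Summing against $\sum_k \frac{(b)_k}{k!} k^2 |x|^k < \infty$ then delivers the required $\mathcal{O}(\lambda^{-2})$ bound on the total error.

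Finally, to reach arbitrary fixed $x \neq 1$ and not only $|x|<1$, I would invoke Euler's integral representation, valid for $x \in \mathbb{C} \setminus [1,\infty)$ once $\lambda$ is large enough that $\Re(c+\lambda) > \Re(b) > 0$: after factoring out the $\lambda$th power $\bigl(\frac{1-t}{1-xt}\bigr)^{\lambda}$, whose logarithm has a maximum at $t=0$ with slope $-(1-x)$, the integral concentrates near $t = 0$ and Watson's lemma reproduces the same two leading coefficients. This integral route both extends the conclusion beyond the disc of convergence of the series and furnishes an independent check of the leading constant $(1-x)^{-b}$ and of the first-order correction obtained from the series manipulation.
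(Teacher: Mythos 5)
The paper gives no proof of this lemma at all: its ``proof'' is the single line ``See \cite{temme2003large}''. Your self-contained argument is therefore a genuinely different route, and its core is sound. For $|x|<1$, expanding $(a+\lambda)_k/(c+\lambda)_k = 1+\frac{(a-c)k}{c+\lambda}+\mathcal{O}(k^2/\lambda^2)$ and summing against $\frac{(b)_k}{k!}x^k$ is exactly the right mechanism, and your remainder scheme does make it rigorous, with one small repair: your head estimate covers $k\le\lambda^{1/2}$ while your tail estimate covers $k\gtrsim\lambda$, leaving the intermediate range unaddressed. This is harmless --- since $|x|<1$ and the Pochhammer ratio grows at most polynomially in $k$, the entire sum over $k\ge\lambda^{1/2}$ is exponentially small, so a single cut at $k=\lambda^{1/2}$ suffices. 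Note also that every invocation of the lemma in the paper has $|x|<1$, so this elementary part of your proof already covers all of the paper's uses.

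Two further points. First, the sign you obtain, $b(a-c)$, is correct, and it differs from the printed statement, which pairs $b(c-a)$ with $x/(1-x)$. You can confirm your sign with the exact example ${}_2\mathrm{F}_1(\lambda,1;\lambda+1;x)=\sum_{k\ge0}\frac{\lambda}{\lambda+k}x^k=(1-x)^{-1}\bigl[1-\frac{1}{\lambda}\frac{x}{1-x}+\mathcal{O}(\lambda^{-2})\bigr]$ (here $a=0$, $b=1$, $c=1$), or with $b=-1$, where the expansion with coefficient $b(a-c)$ terminates and is exact. The printed form looks like a transcription slip: by Pfaff's transformation, ${}_2\mathrm{F}_1(a+\lambda,b;c+\lambda;x)=(1-x)^{-b}\,{}_2\mathrm{F}_1\bigl(c-a,b;c+\lambda;\frac{x}{x-1}\bigr)$, and the large-denominator expansion of the right-hand side has correction term $\frac{b(c-a)}{c+\lambda}\cdot\frac{x}{x-1}$; rewriting $\frac{x}{x-1}$ as $-\frac{x}{1-x}$ flips the sign attached to $c-a$. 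This discrepancy is immaterial for the paper, which only ever uses the leading term $(1-x)^{-b}$ and the boundedness of the bracket, but your write-up should assert the corrected sign explicitly rather than leave it as a ``delicate point''. Second, the extension beyond $|x|<1$ is the weak link of your proposal. Besides requiring $\Re b>0$, the Watson's-lemma argument fails on the contour $[0,1]$ whenever $\Re x>1$ (and $\Im x\ne0$): near $t=0$ one has $\Re\log\frac{1-t}{1-xt}\approx t(\Re x-1)>0$, so the modulus of the integrand is not maximized at $t=0$ and a steepest-descent deformation into the complex $t$-plane is unavoidable. Pfaff's transformation gives a clean partial fix --- it moves the large parameter entirely to the denominator, where your series argument applies verbatim whenever $|x/(x-1)|<1$, i.e.\ $\Re x<\frac{1}{2}$ --- but covering every fixed $x\in\mathbb{C}\setminus[1,\infty)$ genuinely requires the deeper uniform asymptotics that the paper outsources to Temme.
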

\begin{proof}
See \cite{temme2003large}.
\end{proof}

Consider the following model function
\begin{align}\label{def:pole function}
f(x) = \frac{1}{x - z_0}, \quad z_0 \in \mathbb{C}\setminus [-1,1],
\end{align}
which has a simple pole at $x = z_0$. This function provides
valuable insights for the asymptotic behaviour of $\gamma_n$ for
large $n$. Our main result is stated in the following theorem.
\begin{theorem}\label{thm:asymptotic ratio for pole function}
For the function \eqref{def:pole function}, we have
\begin{align}\label{eq:asymptotic of ratio}
\gamma_n = g(z_0) \sqrt{n\pi} + \mathcal{O}(n^{-\frac{1}{2}}), \quad
n\rightarrow\infty,
\end{align}
where $g(z)$ is defined by
\begin{align}\label{def:function g}
g(z) = \sqrt{\frac{z^2 - 1}{ (z \pm \sqrt{z^2 - 1})^2 - 1 }},
\end{align}
and the sign is chosen such that $|z \pm \sqrt{z^2 - 1}|>1$ and
$g(z)>0$ if $z>1$.
\end{theorem}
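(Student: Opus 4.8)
The plan is to reduce everything to the explicit contour formula \eqref{eq:contour gegenbauer coefficient} evaluated on the single pole, and then pass to the large-$n$ asymptotics using the two auxiliary lemmas already in place. Write $w_0 = z_0 \pm \sqrt{z_0^2-1}$, with the branch fixed by $|w_0|>1$. First I would compute $a_n^\lambda$ for the model function \eqref{def:pole function}, now allowing $z_0$ to be any point of $\mathbb{C}\setminus[-1,1]$. Choosing $\rho$ close enough to $1$ that $z_0$ lies outside $\mathcal{E}_\rho$, I deform the contour in \eqref{eq:contour gegenbauer coefficient} outward to a large ellipse $\mathcal{E}_R$, exactly as in the optimality argument of Theorem \ref{thm:decay rate gegenbauer coefficient}. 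In the exterior of $\mathcal{E}_\rho$ the map $z\mapsto w=z+\sqrt{z^2-1}$ satisfies $|w|>1$, so $|w^{-2}|<1$ keeps the argument of the hypergeometric inside the unit disc and off its branch cut; the only singularity between the two ellipses is the simple pole at $z_0$, and the integral over $\mathcal{E}_R$ is $O(R^{-(n+1)})\to 0$. The residue at $z_0$ therefore reproduces \eqref{eq:pole function gegenbauer coefficients} verbatim, with $b+\sqrt{b^2-1}$ replaced by $w_0$, namely
\[
a_n^\lambda = -\frac{2 c_{n,\lambda}}{w_0^{n+1}}\,{}_2\mathrm{F}_1\bigl[n+1,\,1-\lambda;\,n+\lambda+1;\,w_0^{-2}\bigr].
\]
Setting $\lambda=\tfrac12$ gives $a_n^L=a_n^{1/2}$ at once.

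Next I would obtain $a_n^C$ from Lemma \ref{lemma:legendre and chebyshev} by letting $\lambda\to0^+$ in the formula above. Two confluences occur: $\lambda c_{n,\lambda}=\Gamma(\lambda+1)\,\Gamma(n+1)/\Gamma(n+\lambda)\to n$, and the top and bottom parameters of the hypergeometric both tend to $n+1$, collapsing it to ${}_2\mathrm{F}_1[n+1,1;n+1;w_0^{-2}]=(1-w_0^{-2})^{-1}$. Hence $a_n^C=-4\,w_0^{-(n+1)}(1-w_0^{-2})^{-1}$, and the powers $w_0^{-(n+1)}$ cancel in the ratio, leaving
\[
\gamma_n=\frac{c_{n,\frac12}\,(w_0^2-1)}{2w_0^2}\,{}_2\mathrm{F}_1\bigl[n+1,\tfrac12;\,n+\tfrac32;\,w_0^{-2}\bigr].
\]
It remains to insert the asymptotics. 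Lemma \ref{lemma:asymptotic legendre constant} gives $c_{n,\frac12}=\sqrt{n\pi}\,(1+O(n^{-1}))$, while Lemma \ref{lemma:asymptotic of hypergeometric}, applied with large parameter $n$ (so $a=1$, $b=\tfrac12$, $c=\tfrac32$, $x=w_0^{-2}$), gives ${}_2\mathrm{F}_1[n+1,\tfrac12;n+\tfrac32;w_0^{-2}]=(1-w_0^{-2})^{-1/2}(1+O(n^{-1}))$. Multiplying, the $w_0$-dependent prefactor simplifies to $\frac{\sqrt{w_0^2-1}}{2w_0}$; and since $z_0=\tfrac12(w_0+w_0^{-1})$ yields $z_0^2-1=\tfrac{(w_0^2-1)^2}{4w_0^2}$, this prefactor is exactly $g(z_0)=\sqrt{(z_0^2-1)/(w_0^2-1)}$ as in \eqref{def:function g}. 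Thus $\gamma_n=g(z_0)\sqrt{n\pi}\,(1+O(n^{-1}))=g(z_0)\sqrt{n\pi}+O(n^{-1/2})$, which is \eqref{eq:asymptotic of ratio}.

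The routine pieces are the residue evaluation and the two algebraic simplifications. The step I expect to require the most care is the confluent limit $\lambda\to0^+$ producing $a_n^C$: one must check that the degenerate identity ${}_2\mathrm{F}_1[a,1;a;x]=(1-x)^{-1}$ is applied legitimately (as the termwise limit of a series that converges since $|w_0^{-2}|<1$) and that the $1/\lambda$ blow-up of $\Gamma(\lambda)$ is exactly cancelled, so that $\lambda a_n^\lambda$ has a finite, nonzero limit. Tracking the error terms is the other delicate point: each factor carries a relative $O(n^{-1})$ correction, and because the leading term has size $\sqrt{n}$, these combine to the stated absolute error $O(n^{-1/2})$ rather than anything larger.
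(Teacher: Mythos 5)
Your proposal is correct and follows essentially the same route as the paper: it uses the explicit pole-function formula \eqref{eq:pole function gegenbauer coefficients} with $b=z_0$, specializes via Lemma \ref{lemma:legendre and chebyshev} to get $a_n^L$ (at $\lambda=\tfrac12$) and $a_n^C$ (as $\lambda\to0^+$), forms the ratio, and applies Lemmas \ref{lemma:asymptotic legendre constant} and \ref{lemma:asymptotic of hypergeometric}. The only difference is that you spell out two steps the paper leaves implicit — the contour/residue argument extending the formula to complex $z_0$ and the confluent limit producing $a_n^C$ — which is a welcome addition of rigor, not a different method.
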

\begin{proof}
If follows from \eqref{eq:real pole function} with $b = z_0$ that
\begin{equation*}
a_n^{\lambda} = - \frac{ 2c_{n,\lambda} }{ (z_0 \pm \sqrt{z_0^2 -
1})^{n+1} } {}_2\mathrm{ F}_1\left[\begin{matrix} n + 1,~ 1-\lambda;
& \\  n +\lambda + 1;  &\end{matrix} \hspace{-.25cm} \frac{1}{ (z_0
\pm \sqrt{z_0^2 - 1} )^{2} } \right], \quad n \geq 0.
\end{equation*}
By the Lemma \ref{lemma:legendre and chebyshev},
\begin{equation*}\label{eq:legendre coefficients for pole function}
a_n^L = - \frac{ 2c_{n,\frac{1}{2}} }{ (z_0 \pm \sqrt{z_0^2 -
1})^{n+1} } {}_2\mathrm{ F}_1\left[\begin{matrix} n + 1,~
\frac{1}{2}; &
\\  n + \frac{3}{2};  &\end{matrix} \hspace{-0.25cm} \frac{1}{ (z_0 \pm \sqrt{z_0^2 - 1}
)^{2} } \right], \quad n \geq 0,
\end{equation*}
and
\begin{equation*}\label{eq:chebyshev coefficients for pole
function} a_n^C = - \frac{2}{ \sqrt{z_0^2 - 1} (z_0 \pm \sqrt{z_0^2
- 1})^n }, \quad n \geq 0.
\end{equation*}
Now combining the above two equations, we see that
\begin{equation}
\gamma_n = \frac{c_{n,\frac{1}{2}} \sqrt{z_0^2-1}}{z_0 \pm
\sqrt{z_0^2-1}} {}_2\mathrm{ F}_1\left[\begin{matrix} n + 1,~
\frac{1}{2}; &
\\  n + \frac{3}{2}; &\end{matrix} \hspace{-0.25cm}  \frac{1}{ (z_0 \pm \sqrt{z_0^2 - 1}
)^{2} } \right], \quad n \geq 0.
\end{equation}
This together with Lemma \ref{lemma:asymptotic legendre constant}
and Lemma \ref{lemma:asymptotic of hypergeometric} gives the desired
result. 
\end{proof}

\begin{remark}\label{remark:other singularty}
We remark that the result \eqref{eq:asymptotic of ratio} still holds
if $z_0$ is a singularity of algebraic or logarithmic type. The key
point is that the Chebyshev and Legendre coefficients can be
estimated accurately by evaluating their contour integral
expressions at the singularity multiplied by some common factors.
For example, when $f(x) = (b-x)^{\alpha}$ where $b>1$ and
$\alpha>-1$ is not an integer. From \cite{elliott1964evaluation}, we
have that
\[
a_n^{C} = \frac{\xi(\alpha,b) }{ \sqrt{b^2 - 1}(b + \sqrt{b^2 -
1})^n } \left( 1 + \mathcal{O}(n^{-1}) \right), \quad n
\rightarrow\infty,
\]
where $\xi(\alpha,b)$ is defined by
\[
\xi(\alpha,b) = - \frac{ 2 \sin(\alpha\pi) (b^2 -
1)^{\frac{\alpha+1}{2}} \Gamma(\alpha+1) }{\pi n^{\alpha+1}}.
\]
Adapting the similar arguments and Lemma \ref{lemma:asymptotic
legendre constant} and Lemma \ref{lemma:asymptotic of
hypergeometric} gives
\[
a_n^L = \frac{\xi(\alpha,b) }{ \sqrt{b^2 - 1} (b + \sqrt{b^2 - 1})^n
} \left( \sqrt{n\pi} g(b) + \mathcal{O}(n^{-\frac{1}{2}}) \right),
\quad n \rightarrow\infty,
\]
where $g(b)$ is defined as in \eqref{def:function g}. Combining the
above two estimates, it is easy to check that \eqref{eq:asymptotic
of ratio} still holds; see Figure \ref{fig:comparison exterior
singularity} for numerical illustrations.


\end{remark}

\begin{figure}[ht]
\centering
\includegraphics[width=10cm, height = 8cm]{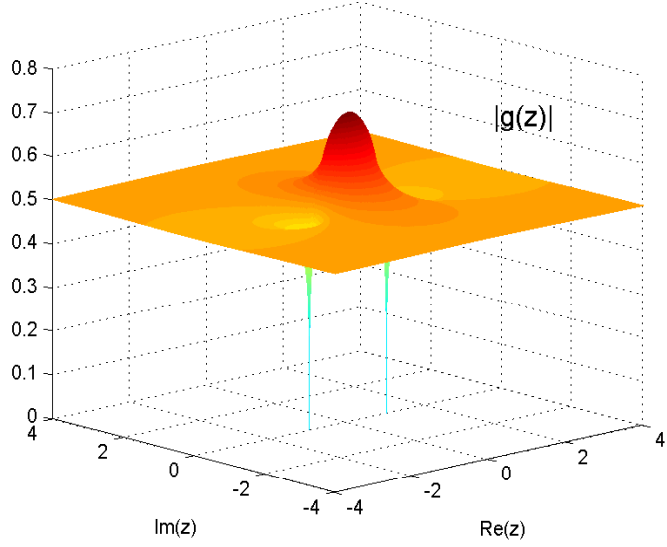}
\caption{Absolute value of the function $g(z)$ in the complex
plane.} \label{fig:absolute value g}
\end{figure}

\begin{remark}\label{remark:property g}
It is easily seen that $g(z) = 0$ when $z = \pm 1$; otherwise, $g(z)
\neq 0$. This leads us to consider that the asymptotic behaviour of
$\gamma_n$ will be greatly different if $z_0$ is an endpoint
singularity. Indeed, we shall show the ratio $\gamma_n$ tends to
some constants as $n$ grows in this case. Moreover, when $g(z_0)
\neq 0$, we can deduce that the Chebyshev coefficient $a_n^C$ decays
approximately $g(z_0) \sqrt{n\pi}$ faster than the Legendre
coefficient $a_n^L$.
\end{remark}

\begin{remark}
It is not difficult to verify that $\lim_{|z|\rightarrow\infty} g(z)
= \frac{1}{2}$ and $g(z) \neq \frac{1}{2}$ for any finite $|z|$,
which implies that the ``Lanczos-Fox-Parker'' proposition is always
false for functions that have a singularity in the complex plane.
Figure \ref{fig:absolute value g} illustrates the absolute value of
$g(z)$ in the complex plane. We can see that $|g(z)|$ converges to
$\frac{1}{2}$ as $|z|\rightarrow\infty$. Meanwhile, we also observe
that $|g(z)|$ attains its maximum value at $z=0$ and $|g(0)| =
\frac{1}{\sqrt{2}}$, which implies that the fastest possible rate of
growth of $\gamma_n$ will be close to $\sqrt{\frac{n\pi}{2}}$ as $n$
grows.
\end{remark}

\begin{remark}
When the pole $z_0$ is real, we have
\begin{align}
g(z_0) = \left\{\begin{array}{ccc}
                                          {\displaystyle  \sqrt{\frac{z_0^2 - 1}{ (z_0 + \sqrt{z_0^2 - 1})^2 - 1 }}  },   & \mbox{$z_0>1$, } \\ [14pt]
                                          {\displaystyle  \sqrt{\frac{z_0^2 - 1}{ (z_0 - \sqrt{z_0^2 - 1})^2 - 1 }}  },   & \mbox{$z_0 <
                                          -1$}.
                                        \end{array}
                                        \right.
\end{align}
It is easy to verify that $g(z_0)$ is monotonically decreasing on
the interval $(-\infty, -1)$ and is monotonically increasing on the
interval $(1, \infty)$ and $ \lim_{z_0 \rightarrow \pm1} g(z_0) = 0$
and $ \lim_{z_0 \rightarrow \pm\infty} g(z_0) = \frac{1}{2}$. If the
pole lies on the imaginary axis, e.g., $z_0 = ib$ where $b$ is a
real and nonzero constant, then
\begin{align}
g(z_0) = g(ib) = \left\{\begin{array}{ccc}
                                          {\displaystyle  \sqrt{\frac{b^2 + 1}{ (b + \sqrt{b^2 +1})^2 + 1 }}  },   & \mbox{$b>0$, }\\ [12pt]
                                          {\displaystyle  \sqrt{\frac{b^2 + 1}{ (b - \sqrt{b^2 +1})^2 + 1 }}  },   & \mbox{$b<0$}.
                                        \end{array}
                                        \right.
\end{align}
It can be seen that $g(z_0)$ is monotonically increasing when $b \in
(-\infty, 0)$ and is monotonically decreasing function when $b \in
(0,\infty)$ and
\begin{equation*}
\lim_{b \rightarrow \pm0 } g(ib) = \frac{1}{\sqrt{2}}, \quad \lim_{b
\rightarrow \pm\infty} g(ib) = \frac{1}{2}.
\end{equation*}
These properties can be confirmed from Figure \ref{fig:absolute
value g}.
\end{remark}

\begin{figure}[ht]
\centering
\includegraphics[width=6.8cm]{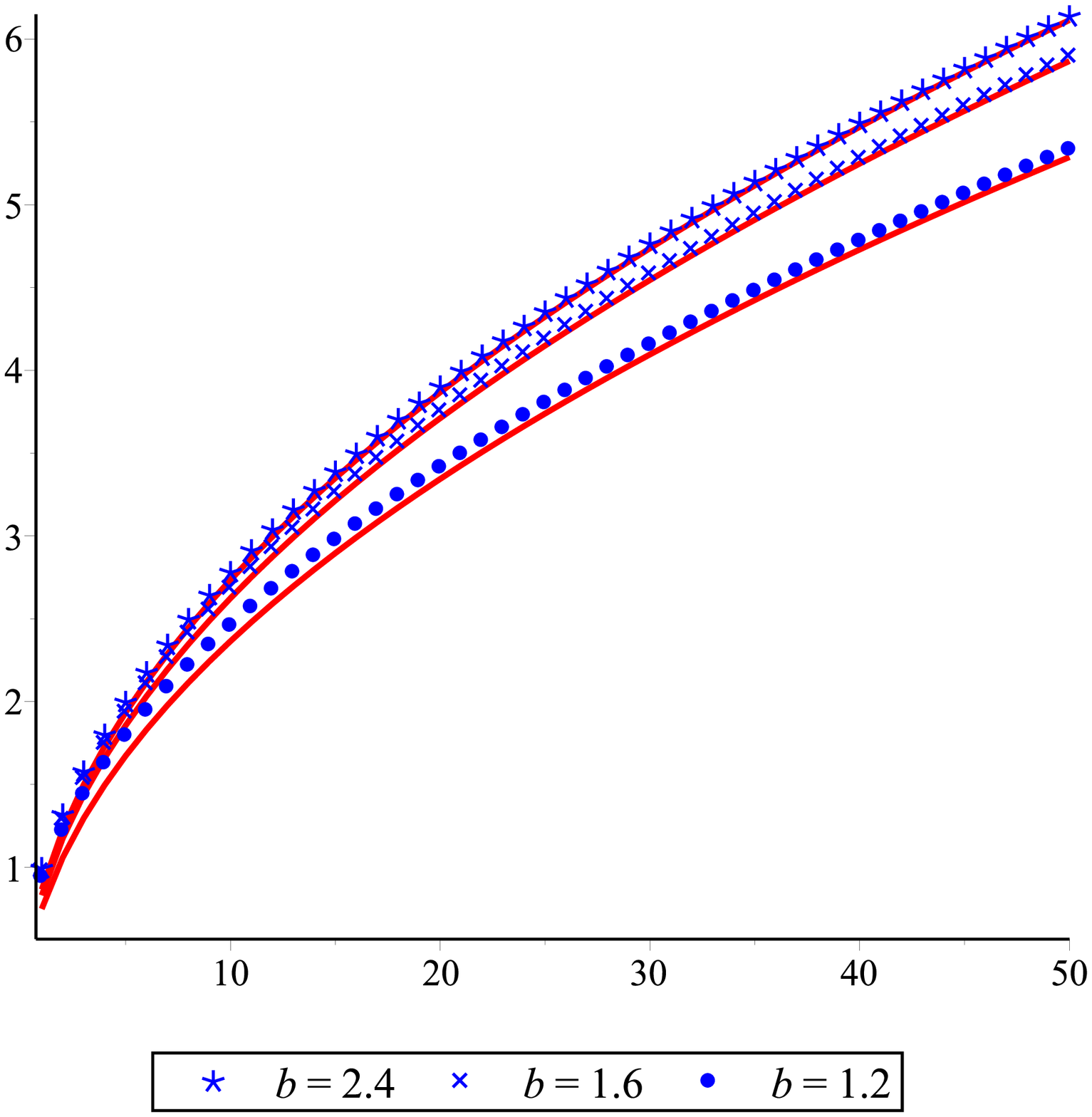}\qquad
\includegraphics[width=6.8cm]{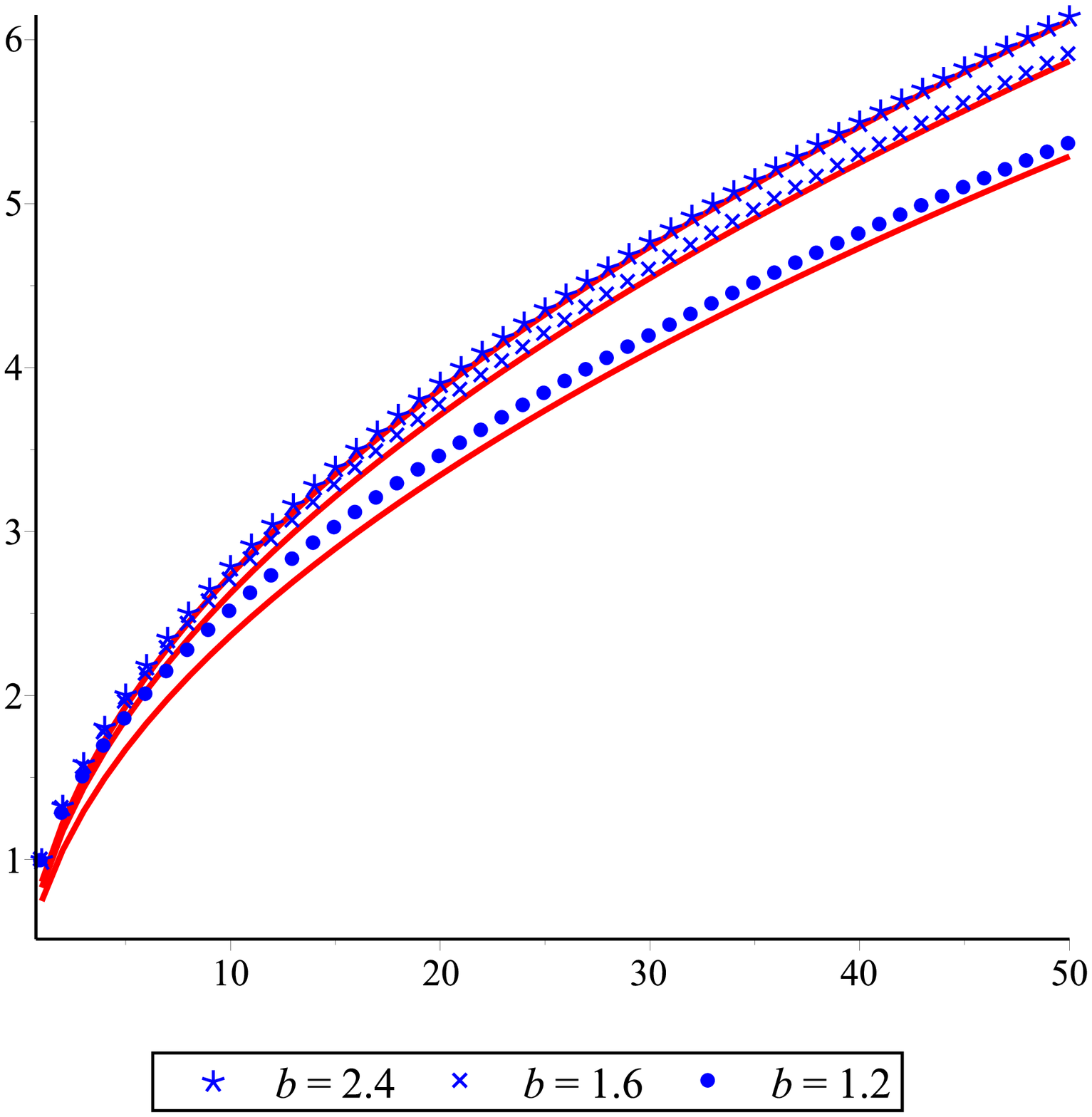}
\caption{The exact values of ratio $\gamma_n$ (dots, asterisks and
crosses) and the predicted values $g(b)\sqrt{n\pi}$ (lines) for
$f(x) = \log(b - x)$ (left) and $f(x) = (b - x)^{\frac{2}{3}}$
(right). Here $n$ ranges from $1$ to $50$.} \label{fig:comparison
exterior singularity}
\end{figure}

\begin{example}
For model functions such as
\[
f(x) = \log(b \pm x),~ (b\pm x)^{\alpha},
\]
where $b>1$ and $\alpha$ is not a nonegative integer. It follows
from Theorem \ref{thm:asymptotic ratio for pole function} and Remark
\ref{remark:other singularty} that
\begin{align*}
\gamma_n = g(b) \sqrt{n\pi} + \mathcal{O}(n^{-\frac{1}{2}}), \quad
n\rightarrow\infty,
\end{align*}
where $g(b)$ is given by \eqref{def:function g}, e.g.,
\begin{align}\label{eq:ratio real pole}
g(b) = \sqrt{\frac{b^2 - 1}{ (b + \sqrt{b^2 - 1})^2 - 1 }}.
\end{align}

In Figure \ref{fig:comparison exterior singularity}, we illustrate
the exact values of $\gamma_n$ and the predicted values
$g(b)\sqrt{n\pi}$ for two functions $f(x) = \log(b-x)$ and $f(x) =
(b - x)^{\frac{2}{3}}$, where $g(b)$ is defined by \eqref{eq:ratio
real pole}. For each function, we show numerical results for three
values of $b$. As expected, the asymptotic behaviour of $\gamma_n$
is in excellent agreement with our predicted values.
\end{example}

%
%

\subsection{The case $z_0 = \pm 1$}
In this subsection we consider the case that $z_0$ is an endpoint
singularity. Boyd and Petschek in \cite{boyd2013relationships}
considered the special case $f(x) = (1 - x)^{\alpha}$ where
$\alpha>0$ is not an integer and showed that the decay rate of the
Chebyshev coefficient $a_n^C$ is the same as that of the Legendre
coefficient $a_n^L$. In what follows, we consider the asymptotic
behaviour of $\gamma_n$ for the model functions $f(x) = \log(1\pm
x)$ and $f(x) = (1 \pm x)^{\alpha}$ where $\Re(\alpha) >
-\frac{1}{2}$ is not an integer. We present delicate results on the
asymptotic behaviour of $\gamma_n$.


\begin{theorem}\label{thm:asymptotic gamma endpoint singularity}
For the functions $f(x) = (1\pm x)^{\alpha}$ where $\Re(\alpha)>
-\frac{1}{2}$ is not an integer, we have
\begin{align}\label{eq:asymptotic gamma algebraic endpoint}
\gamma_n = \frac{\Gamma(\alpha+1)}{\Gamma(\alpha+\frac{1}{2})}
\sqrt{\pi} + \mathcal{O}(n^{-1}), \quad n\rightarrow\infty.
\end{align}
For the functions $f(x) = \log(1 \pm x)$, we have
\begin{align}\label{eq:asymptotic gamma logarithmic endpoint}
\gamma_n = 1 + \mathcal{O}(n^{-1}), \quad n \rightarrow\infty.
\end{align}
\end{theorem}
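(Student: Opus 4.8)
The plan is to bypass the residue argument of Theorem~\ref{thm:asymptotic ratio for pole function}, which is unavailable here because $(1\mp z)^{\alpha}$ has a branch point at $z=\pm1$ --- a focus, hence an interior point, of every Bernstein ellipse $\mathcal{E}_\rho$ --- so $f$ is not analytic inside $\mathcal{E}_\rho$. Instead I would compute the Legendre and Chebyshev coefficients of $(1\pm x)^{\alpha}$ in closed form and take their quotient directly. The map $x\mapsto-x$ sends $(1+x)^{\alpha}\mapsto(1-x)^{\alpha}$, $P_n(x)\mapsto(-1)^nP_n(x)$ and $T_n(x)\mapsto(-1)^nT_n(x)$, so the coefficients of $(1+x)^{\alpha}$ are $(-1)^n$ times those of $(1-x)^{\alpha}$ and this factor cancels in $\gamma_n$; it therefore suffices to treat $(1-x)^{\alpha}$, and afterwards $\log(1-x)$. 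I regard all resulting formulas as analytic functions of $\alpha$, continued from $\alpha>0$ to the stated range $\Re(\alpha)>-\tfrac12$ on which the Chebyshev integral (the binding constraint, because of the weight $1/\sqrt{1-x^2}$ at $x=1$) still converges.

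First I would record the exact coefficients. For Legendre I use the classical identity
\[
\int_{-1}^{1}(1-x)^{\alpha}P_n(x)\,dx=\frac{(-1)^n\,2^{\alpha+1}\,\Gamma(\alpha+1)^2}{\Gamma(n+\alpha+2)\,\Gamma(\alpha-n+1)},
\]
so that $a_n^{L}=(n+\tfrac12)$ times this. For Chebyshev I set $x=\cos\theta$, use $(1-\cos\theta)^{\alpha}=2^{\alpha}\sin^{2\alpha}(\theta/2)$, and apply
\[
\int_{0}^{\pi/2}\sin^{2\alpha}\phi\,\cos(2n\phi)\,d\phi=\frac{\pi\,\Gamma(2\alpha+1)\,\cos(n\pi)}{2^{2\alpha+1}\,\Gamma(\alpha+n+1)\,\Gamma(\alpha-n+1)},
\]
obtaining $a_n^{C}=2^{1-\alpha}\Gamma(2\alpha+1)(-1)^n/\big(\Gamma(n+\alpha+1)\Gamma(\alpha-n+1)\big)$. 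Both coefficients carry the same awkward factor $\Gamma(\alpha-n+1)^{-1}$.

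The decisive step is the quotient. Applying the reflection formula in the form $\Gamma(\alpha-n+1)^{-1}=\pi^{-1}(-1)^{n+1}\sin(\pi\alpha)\,\Gamma(n-\alpha)$ rewrites each coefficient with a factor $\Gamma(n-\alpha)\sin(\pi\alpha)$; in $\gamma_n=a_n^{L}/a_n^{C}$ the factors $\Gamma(n-\alpha)$, $\sin(\pi\alpha)$ and $(-1)^n$ all cancel, leaving the exact expression
\[
\gamma_n=\frac{n+\tfrac12}{\,n+\alpha+1\,}\cdot\frac{2^{2\alpha}\,\Gamma(\alpha+1)^2}{\Gamma(2\alpha+1)}.
\]
The Legendre duplication formula $\Gamma(2\alpha+1)=\pi^{-1/2}2^{2\alpha}\Gamma(\alpha+\tfrac12)\Gamma(\alpha+1)$ collapses the constant to $\sqrt{\pi}\,\Gamma(\alpha+1)/\Gamma(\alpha+\tfrac12)$, and $\tfrac{n+1/2}{n+\alpha+1}=1+\mathcal{O}(n^{-1})$ yields \eqref{eq:asymptotic gamma algebraic endpoint}.

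For the logarithmic functions I differentiate in $\alpha$ at $\alpha=0$: since $\partial_\alpha(1-x)^{\alpha}|_{\alpha=0}=\log(1-x)$ and the coefficients are linear in $f$, we have $a_n^{L}[\log(1-x)]=\partial_\alpha a_n^{L}[(1-x)^{\alpha}]|_{\alpha=0}$, and similarly for Chebyshev, the interchange of differentiation and integration being justified by dominated convergence. Because both closed forms carry the factor $\sin(\pi\alpha)$, which vanishes at $\alpha=0$, only the derivative of $\sin(\pi\alpha)$ survives the product rule; evaluating the remaining Gamma ratios at $\alpha=0$ gives $a_n^{L}[\log(1-x)]=-(2n+1)/(n(n+1))$ and $a_n^{C}[\log(1-x)]=-2/n$, whence $\gamma_n=(2n+1)/(2(n+1))=1+\mathcal{O}(n^{-1})$, which is \eqref{eq:asymptotic gamma logarithmic endpoint}. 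I expect the main difficulty to be bookkeeping rather than conceptual: arranging the reflection-formula rewrite so that every $n$-dependent and $\alpha$-singular factor cancels cleanly in the quotient, and justifying the closed forms (and the differentiation under the integral sign) throughout the complex range $\Re(\alpha)>-\tfrac12$ by analytic continuation.
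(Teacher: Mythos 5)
Your proposal is correct, and the computations check out: your closed forms for the Legendre and Chebyshev coefficients of $(1-x)^{\alpha}$ agree exactly with the ones the paper obtains (the paper's Chebyshev formula is yours rewritten via the duplication formula), your exact expression
$\gamma_n=\frac{n+1/2}{n+\alpha+1}\cdot\sqrt{\pi}\,\Gamma(\alpha+1)/\Gamma(\alpha+\tfrac12)$
matches the paper's, and the logarithmic values $a_n^L=-(2n+1)/(n(n+1))$, $a_n^C=-2/n$ are identical to those in the paper's appendix. The route, however, is genuinely different in its key lemmas. The paper does not compute $a_n^L$ and $a_n^C$ directly: it first derives closed-form \emph{Gegenbauer} coefficients $a_n^{\lambda}$ of $(1\pm x)^{\alpha}$ for general $\lambda$ (Appendix Lemma \ref{lemma:gegenbauer algebraic endpoint function}, via the tabulated integral \cite[Eqn.~7.311]{gradshteyn2007ryzhik}) and of $\log(1\pm x)$ (Appendix Lemma \ref{lemma:gegenbauer endpoint logarithm function}, via a contour deformation around the branch cut $[1,\infty)$), and then specializes through Lemma \ref{lemma:legendre and chebyshev}: $\lambda=\tfrac12$ for Legendre and the limit $\lambda\to0^{+}$ for Chebyshev. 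You instead evaluate the two coefficient integrals directly from classical identities (a Rodrigues-type Legendre integral and a Beta-type trigonometric integral after $x=\cos\theta$), and you dispose of the logarithmic case by differentiating the algebraic closed forms in $\alpha$ at $\alpha=0$, exploiting the $\sin(\pi\alpha)$ factor produced by the reflection formula so that only one product-rule term survives. Your approach is more elementary and self-contained — no appendix machinery, no singular $\lambda\to0^{+}$ limit, and no branch-cut contour deformation — while the paper's approach buys the explicit Gegenbauer coefficients for every $\lambda>-\tfrac12$, a result of independent interest. Two minor remarks: the reflection-formula rewrite is not actually needed in the algebraic case, since the factors $\Gamma(\alpha-n+1)$ and $(-1)^n$ cancel verbatim in the quotient (though it is exactly what makes your differentiation trick work for the log case); and your justification of differentiation under the integral sign should be stated for $\alpha$ in a real neighborhood of $0$ with the dominating function $\max\bigl((1-x)^{-1/4},1\bigr)\,\lvert\log(1-x)\rvert$ against the Chebyshev weight, which is exactly the dominated-convergence argument you sketch.
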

\begin{proof}
We consider the case $f(x) = (1 - x)^{\alpha}$. Since the Legendre
and Chebyshev coefficients correspond to the special case $\lambda =
\frac{1}{2}$ and $\lambda = 0$, respectively, we have from Lemma
\ref{lemma:legendre and chebyshev} and Lemma \ref{lemma:gegenbauer
algebraic endpoint function} that
\[
a_n^L = (-1)^n \frac{ 2^{\alpha+1} \Gamma(\alpha+1)^2
(n+\frac{1}{2}) }{ \Gamma(\alpha-n+1) \Gamma(\alpha+n+2) },
\]
where $\Re(\alpha) > -1$, and
\[
a_n^C = (-1)^n \frac{ 2^{\alpha+1} \Gamma(\alpha+\frac{1}{2})
\Gamma(\alpha+1) }{\sqrt{\pi} \Gamma(\alpha-n+1) \Gamma(\alpha+n+1)
},
\]
where $\Re(\alpha) > -\frac{1}{2}$. Thus, when $\Re(\alpha) >
-\frac{1}{2}$, combining the above two equations, we get
\[
\gamma_n = \left(n+\frac{1}{2}\right) \frac{ \Gamma(n+\alpha+1)
}{\Gamma(n+\alpha+2)}
\frac{\Gamma(\alpha+1)}{\Gamma(\alpha+\frac{1}{2})} \sqrt{\pi},~~
n\geq 0.
\]
By using \eqref{eq:asymptotic ratio gamma function}, the result
\eqref{eq:asymptotic gamma algebraic endpoint} follows. For the case
$f(x) = \log(1 - x)$, using Lemma \ref{lemma:legendre and chebyshev}
and Lemma \ref{lemma:gegenbauer endpoint logarithm function} we see
that
\[
a_n^L = -\frac{2n+1}{n(n+1)}, \quad a_n^C = -\frac{2}{n},
\]
for each $n \geq 1$. Hence the result \eqref{eq:asymptotic gamma
logarithmic endpoint} follows. The other cases $f(x) =
(1+x)^{\alpha},~ \log(1+x)$ can be proved similarly and we omit the
details. This proves Theorem \ref{thm:asymptotic gamma endpoint
singularity}.
\end{proof}

\begin{remark}
For functions of the form $f(x) = (1\pm x)^{\alpha}$, where
$\Re(\alpha)>-\frac{1}{2}$ is not an integer, Theorem
\ref{thm:asymptotic gamma endpoint singularity} implies that the
Chebyshev coefficient $a_n^C$ decays
$\frac{\Gamma(\alpha+1)}{\Gamma(\alpha+\frac{1}{2})} \sqrt{\pi}$
faster than its Legendre counterpart $a_n^L$. For functions of the
form $f(x) = \log(1\pm x)$, however, both coefficients are almost
the same for large $n$.
\end{remark}

\begin{remark}
For more general functions such as $f(x) = (1\pm x)^{\alpha} h(x)$
and $f(x) = \log(1\pm x) h(x)$ where $\Re(\alpha) > -\frac{1}{2}$ is
not an integer and $h(x)$ is analytic in a neighborhood of the
interval $[-1,1]$, note that the main contributions to the Chebyshev
and Legendre coefficients come from the endpoint singularity, it is
not difficult to verify that the results of Theorem
\ref{thm:asymptotic gamma endpoint singularity} still hold.
\end{remark}

\subsection{The case $z_0 \in (-1,1)$}
When the singularity $z_0 \in (-1,1)$, we are not able to derive the
asymptotic behaviour of their Chebyshev and Legendre coefficients,
since in this case $f(z)$ is not analytic and their contour integral
expressions can not be used.

The special case $f(x) = |x|$ has been analyzed in
\cite{boyd2013relationships} and it was shown that
$\gamma_n/\sqrt{n\pi} \sim \frac{1}{\sqrt{2}}$ as
$n\rightarrow\infty$. However, the situation becomes much more
complicated when $z_0 \neq 0$. As an illustrative example, we show
the behaviour of $\gamma_n/\sqrt{n\pi}$ for $f(x) =
\log|x-\frac{1}{4}|$ and $f(x) = |x-\frac{1}{4}|^{\frac{1}{2}}$ in
Figure \ref{fig:comparison interior singularity}. It is clear to see
that $\gamma_n/\sqrt{n\pi}$ oscillates around some finite values. On
the other hand, we can also see that $\gamma_n/\sqrt{n\pi} =
\mathcal{O}(1)$, which implies that $a_n^C$ decays
$\mathcal{O}(\sqrt{n})$ faster than $a_n^L$. However, a precise
result on the asymptotic behaviour of $\gamma_n$ is still open.

\begin{figure}[ht]
\centering
\includegraphics[width=6.8cm]{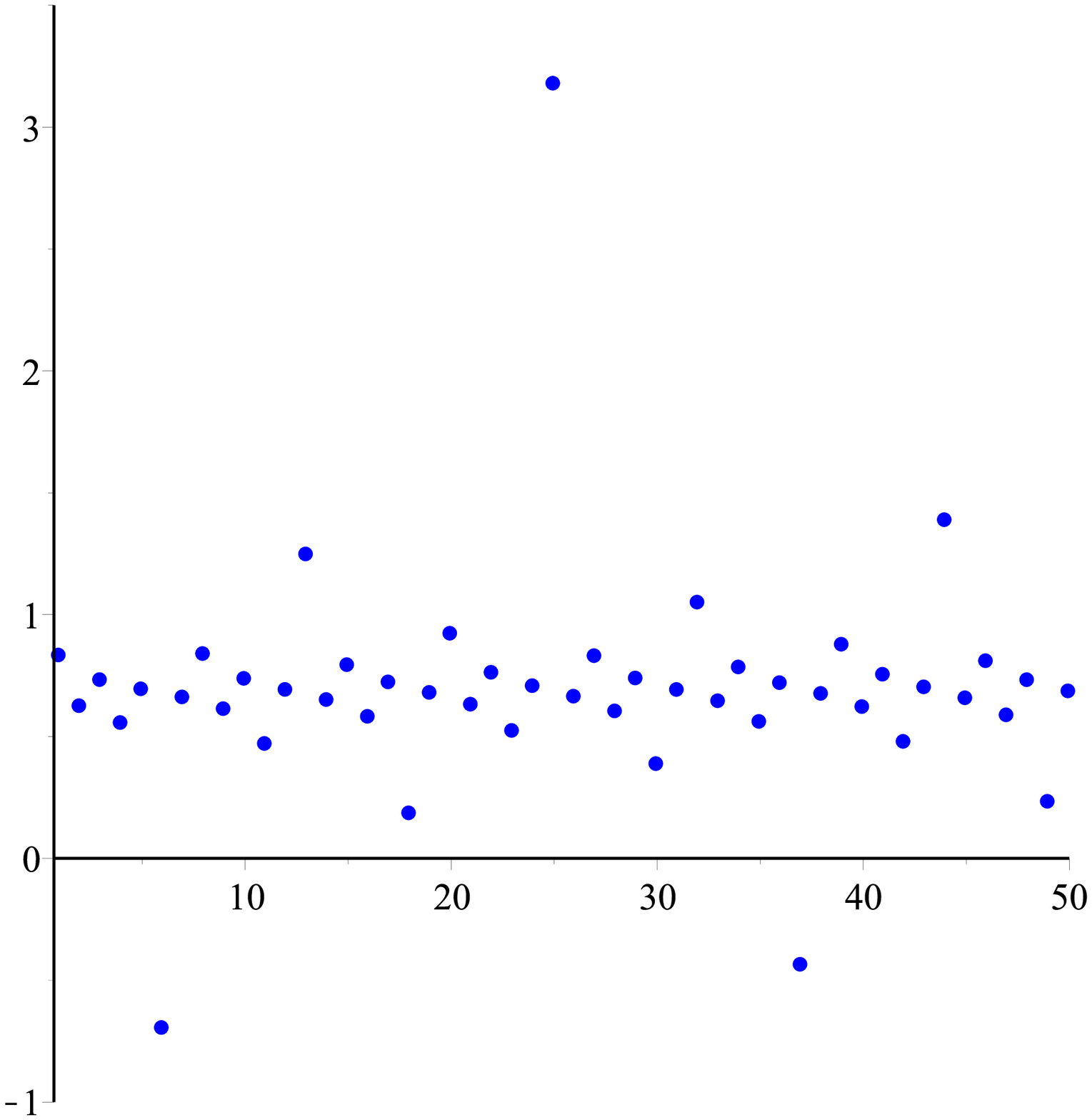}\qquad
\includegraphics[width=6.8cm]{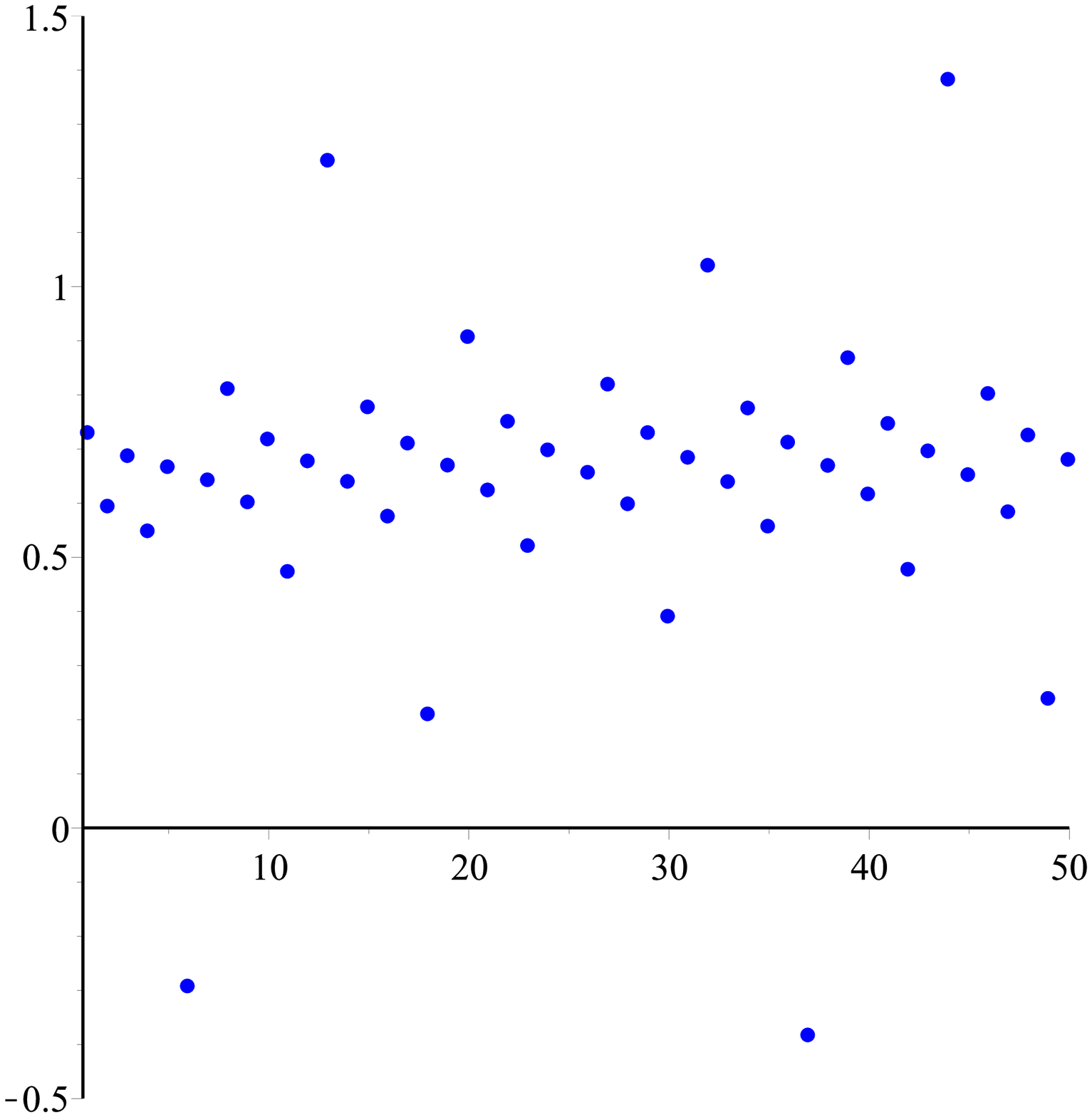}
\caption{The values of $\frac{\gamma_n}{\sqrt{n\pi}}$ for $f(x) =
\log|x - \frac{1}{4}|$ (left) and $f(x) = |x -
\frac{1}{4}|^{\frac{1}{2}}$ (right). Here $n$ ranges from $1$ to
$50$.} \label{fig:comparison interior singularity}
\end{figure}

\section{Concluding remarks}\label{sec:conclusion}
Gegenbauer expansion is an important tool in the resolution of Gibbs
phenomenon and the numerical solution of differential equations. In
this work, we have proposed a simple derivation of the contour
integral expression of the coefficients in the Gegenbauer series
expansion. We have derived optimal and explicit estimates for the
Gegenbauer coefficients and these estimates are sharper than the
existing ones. We further apply these optimal estimates to establish
some rigorous and computable bounds for the truncated Gegenbauer
series in the uniform norm. Additionally, we also consider the
comparison of the decay rates of the Legendre and Chebyshev
coefficients. Delicate results on the asymptotic behaviour of the
ratio of the Legendre coefficient $a_n^L$ to the Chebyshev
coefficient $a_n^C$ are presented.

We point out that our results can be easily extended to the
comparison of spectral methods based on Chebyshev and Legendre
expansions. It is well known that the maximum error of the truncated
Chebyshev expansion can be estimated approximately by the absolute
value of the first neglected term if the Chebyshev coefficients
decay rapidly; see \cite{elliott1965truncation}. This implies the
comparison of spectral methods using Chebyshev and Lgendre
expansions can be approximately transformed into the comparison of
the corresponding Legendre and Chebyshev coefficients. For example,
if the singularity of $f$ is outside the interval $[-1,1]$ which
implies that the Legendre and Chebyshev coefficients decay
exponentially, then we can deduce that the rate of convergence of
the spectral method using Chebyshev expansion is
$\mathcal{O}(\sqrt{n})$ faster than that of its Legendre
counterpart, where $n$ is the number of terms in both expansions.

\section*{Acknowledgement}
The author would like to thank the anonymous referees for their
valuable comments.

\appendix

\section[Appendix]{Gegenbauer expansion coefficients of functions with endpoint singularities}\label{append:Gegenbauer coefficient endpoint sinfularity}
We present explicit formulas of the Gegenbauer expansion
coefficients for $f(x) = \log(1 \pm x)$ and $f(x) = (1 \pm
x)^{\alpha}$ where $\Re(\alpha) > -\frac{1}{2}$ is not an integer.

\begin{lemma}\label{lemma:gegenbauer algebraic endpoint function}
For the functions $f(x) = (1 \pm x)^{\alpha}$ where $\Re(\alpha)
> -\frac{1}{2}$ is not an integer. Then, for $\Re(\lambda)> -\frac{1}{2}$ and $\Re(\lambda + \alpha)> -\frac{1}{2}$,
\begin{align}\label{eq:gegenbauer coefficients algebraic endpoint}
a_n^{\lambda} = \mu_n \frac{ 2^{4\lambda+\alpha-1} \Gamma(\lambda)^2
\Gamma(\alpha+\lambda +\frac{1}{2}) \Gamma(\lambda+\frac{1}{2})
\Gamma(\alpha+1) (n+\lambda) }{\pi \Gamma(2\lambda)
\Gamma(\alpha-n+1) \Gamma(\alpha+2\lambda+n+1)},~~ n\geq0,
\end{align}
where $\mu_n = (-1)^{n}$ when $f(x) = (1-x)^{\alpha}$ and $\mu_n =
1$ when $f(x) = (1+x)^{\alpha}$.
\end{lemma}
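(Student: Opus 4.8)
The plan is to reduce the weighted integral defining $a_n^{\lambda}$ to a single Jacobi-polynomial integral and evaluate that integral by the Rodrigues formula. Treating first $f(x)=(1-x)^{\alpha}$, I would start from \eqref{eq:gegenbauer coefficients}, split the weight as $(1-x^2)^{\lambda-\frac12}=(1-x)^{\lambda-\frac12}(1+x)^{\lambda-\frac12}$, and absorb the factor $(1-x)^{\alpha}$, so that
\[
a_n^{\lambda}=\frac{1}{h_n^{(\lambda)}}\int_{-1}^{1}(1-x)^{\lambda-\frac12+\alpha}(1+x)^{\lambda-\frac12}C_n^{(\lambda)}(x)\,dx .
\]
Next I would pass from Gegenbauer to Jacobi polynomials via the standard identity $C_n^{(\lambda)}(x)=\frac{(2\lambda)_n}{(\lambda+\frac12)_n}P_n^{(\lambda-\frac12,\lambda-\frac12)}(x)$, reducing the computation to
\[
\int_{-1}^{1}(1-x)^{\sigma}(1+x)^{\beta}P_n^{(\beta,\beta)}(x)\,dx,\qquad \sigma=\lambda-\tfrac12+\alpha,\quad \beta=\lambda-\tfrac12 .
\]
The crucial structural feature is that the exponent of $(1+x)$ here equals the second Jacobi index $\beta$; this is exactly the configuration in which the Rodrigues representation collapses cleanly.

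The central step is the evaluation of this integral. I would insert the Rodrigues formula $P_n^{(\beta,\beta)}(x)=\frac{(-1)^n}{2^n n!}(1-x)^{-\beta}(1+x)^{-\beta}\frac{d^n}{dx^n}\big[(1-x)^{n+\beta}(1+x)^{n+\beta}\big]$ and integrate by parts $n$ times. The factor $(1+x)^{\beta}$ cancels the $(1+x)^{-\beta}$ produced by the Rodrigues formula, and differentiating $(1-x)^{\sigma-\beta}=(1-x)^{\alpha}$ exactly $n$ times yields the constant $\tfrac{\Gamma(\alpha+1)}{\Gamma(\alpha-n+1)}$, which is finite and nonzero precisely because $\alpha$ is not an integer. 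What remains is the Beta integral $\int_{-1}^{1}(1-x)^{\sigma}(1+x)^{n+\beta}\,dx=2^{\sigma+n+\beta+1}\frac{\Gamma(\sigma+1)\Gamma(n+\beta+1)}{\Gamma(\sigma+n+\beta+2)}$, so the whole integral is expressed through gamma functions and carries an overall factor $(-1)^n$.

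The only genuine obstacle is justifying the repeated integration by parts, i.e.\ verifying that every boundary contribution vanishes. I would track the generic boundary term $\big[(1-x)^{\sigma-\beta}\big]^{(j)}\cdot g^{(n-1-j)}(x)$ with $g(x)=(1-x)^{n+\beta}(1+x)^{n+\beta}$: near $x=1$ it behaves like $(1-x)^{\sigma+1}$ and near $x=-1$ like $(1+x)^{\beta+1}$, for all $0\le j\le n-1$. Hence all boundary terms vanish exactly under the stated hypotheses $\Re(\lambda+\alpha)>-\frac12$ (equivalently $\Re(\sigma)>-1$) and $\Re(\lambda)>-\frac12$ (equivalently $\Re(\beta)>-1$), so no separate analytic continuation is required. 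Finally I would simplify: inserting the conversion ratio $(2\lambda)_n/(\lambda+\frac12)_n=\frac{\Gamma(n+2\lambda)\Gamma(\lambda+\frac12)}{\Gamma(2\lambda)\Gamma(n+\lambda+\frac12)}$ and the normalization \eqref{eq:normalization constant}, the factors $\Gamma(n+\lambda+\frac12)$, $n!=\Gamma(n+1)$ and $\Gamma(n+2\lambda)$ cancel and the powers of two merge into $2^{4\lambda+\alpha-1}$, reproducing \eqref{eq:gegenbauer coefficients algebraic endpoint} with $\mu_n=(-1)^n$. The case $f(x)=(1+x)^{\alpha}$ then follows immediately from the substitution $x\mapsto -x$ together with the parity $C_n^{(\lambda)}(-x)=(-1)^n C_n^{(\lambda)}(x)$, which supplies an additional factor $(-1)^n$ and thereby replaces $\mu_n=(-1)^n$ by $\mu_n=1$.
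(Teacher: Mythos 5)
Your proposal is correct, but it takes a more self-contained route than the paper. The paper's own proof is essentially a table lookup: it writes $a_n^{\lambda}$ as the weighted integral
\begin{equation*}
a_n^{\lambda}=\frac{1}{h_n^{(\lambda)}}\int_{-1}^{1}(1-x)^{\alpha+\lambda-\frac{1}{2}}(1+x)^{\lambda-\frac{1}{2}}C_n^{(\lambda)}(x)\,dx
\end{equation*}
and then cites the closed form of this integral from Gradshteyn--Ryzhik (Eqn.~7.311). You instead prove that integral evaluation from scratch: converting to the Jacobi polynomial $P_n^{(\beta,\beta)}$ with $\beta=\lambda-\frac12$, applying the Rodrigues formula, integrating by parts $n$ times, and finishing with a Beta integral. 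I checked the bookkeeping: the sign count (one $(-1)^n$ from Rodrigues, one from the $n$ integrations by parts, one from $\frac{d^n}{dx^n}(1-x)^{\alpha}$) gives the overall $(-1)^n$, the exponent $(1-x)^{\alpha-n}$ recombines with $(1-x)^{n+\beta}$ to give exactly the Beta integrand $(1-x)^{\sigma}(1+x)^{n+\beta}$, and the gamma-function cancellations against $h_n^{(\lambda)}$ and $(2\lambda)_n/(\lambda+\frac12)_n$ do reproduce \eqref{eq:gegenbauer coefficients algebraic endpoint}, including the power $2^{4\lambda+\alpha-1}$. Your boundary-term analysis is also right: the worst-case exponents $(1-x)^{\sigma+1}$ at $x=1$ and $(1+x)^{\beta+1}$ at $x=-1$ are uniform in $j$, so the hypotheses $\Re(\lambda+\alpha)>-\frac12$ and $\Re(\lambda)>-\frac12$ are precisely what makes every boundary contribution vanish. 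What each approach buys: the paper's citation is shorter, but your derivation makes the lemma self-contained and, more importantly, explains \emph{why} the stated parameter restrictions are the natural ones (they are exactly the convergence/boundary conditions), which the table reference leaves opaque. One cosmetic slip: differentiating $(1-x)^{\alpha}$ exactly $n$ times yields $(-1)^n\frac{\Gamma(\alpha+1)}{\Gamma(\alpha-n+1)}(1-x)^{\alpha-n}$, a power function times a constant rather than a constant; your subsequent formulas treat this correctly, so it is only a matter of wording.
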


\begin{proof}
We only consider the case $f(x) = (1-x)^{\alpha}$ since the argument
in the case $f(x) = (1+x)^{\alpha}$ is similar. For each $n \geq 0$,
using the definition of $a_n^{\lambda}$ we have
\begin{align*}
a_n^{\lambda}  = \frac{1}{h_n^{(\lambda)}} \int_{-1}^{1}
(1-x)^{\alpha+\lambda-\frac{1}{2}} (1+x)^{\lambda-\frac{1}{2}}
C_n^{(\lambda)}(x) dx.
\end{align*}
Using \cite[Eqn.~7.311]{gradshteyn2007ryzhik} gives
\eqref{eq:gegenbauer coefficients algebraic endpoint}. This
completes the proof.
\end{proof}

\begin{lemma}\label{lemma:gegenbauer endpoint logarithm function}
For the functions $f(x) = \log(1 \pm x)$. For each $n \geq 1$, their
Gegenbauer coefficients can be written explicitly by
\begin{align}\label{eq:gegenbauer coefficient endpoint logarithmic}
a_n^{\lambda}  =  \mu_n \frac{4^{\lambda} \Gamma(\lambda)
\Gamma\left( \lambda + \frac{1}{2} \right) }{\sqrt{\pi}} \frac{
\Gamma(n+1) \Gamma(n+\lambda+1) }{ n \Gamma(n+\lambda) \Gamma(n+1 +
2\lambda) },
\end{align}
where $\mu_n = -1$ when $f(x) = \log(1-x)$ and $\mu_n = (-1)^{n+1}$
when $f(x) = \log(1+x)$.
\end{lemma}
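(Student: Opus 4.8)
The plan is to obtain the logarithmic case by differentiating the algebraic case of Lemma~\ref{lemma:gegenbauer algebraic endpoint function} with respect to the exponent. The starting observation is that $\frac{\partial}{\partial\alpha}(1-x)^{\alpha}\big|_{\alpha=0}=\log(1-x)$ and that the Gegenbauer coefficient depends linearly on $f$ through the integral \eqref{eq:gegenbauer coefficients}. Since $\int_{-1}^{1}(1-x^2)^{\lambda-\frac{1}{2}}(1-x)^{\alpha}C_n^{(\lambda)}(x)\,dx$ converges and is analytic in $\alpha$ on a neighbourhood of $\alpha=0$ (for fixed $\lambda>0$ one needs only $\Re(\alpha+\lambda)>-\frac{1}{2}$, and the factor $\log(1-x)$ does not spoil integrability near $x=1$), differentiation under the integral sign is justified, giving
\begin{equation*}
a_n^{\lambda}[\log(1-x)] = \frac{\partial}{\partial\alpha}\, a_n^{\lambda}[(1-x)^{\alpha}]\Big|_{\alpha=0}.
\end{equation*}
Thus it suffices to differentiate the explicit right-hand side of \eqref{eq:gegenbauer coefficients algebraic endpoint} in $\alpha$ and evaluate at $\alpha=0$.

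First I would isolate the only $\alpha$-dependent factor that vanishes at $\alpha=0$. Writing the formula of Lemma~\ref{lemma:gegenbauer algebraic endpoint function} as $a_n^{\lambda}[(1-x)^{\alpha}]=G(\alpha)/\Gamma(\alpha-n+1)$, where $G(\alpha)$ collects all remaining factors and is analytic and nonzero at $\alpha=0$, note that for $n\geq1$ we have $\alpha-n+1\big|_{\alpha=0}=1-n\leq 0$, so $1/\Gamma(1-n)=0$. Hence in the product rule every term carrying an undifferentiated $1/\Gamma(\alpha-n+1)$ drops out at $\alpha=0$, and only
\begin{equation*}
a_n^{\lambda}[\log(1-x)] = G(0)\,\frac{d}{d\alpha}\frac{1}{\Gamma(\alpha-n+1)}\Big|_{\alpha=0}
\end{equation*}
survives. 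The key computation is the derivative of the reciprocal gamma function at its simple zero $z=1-n=-(n-1)$: using $\Gamma(z)\sim\frac{(-1)^{m}}{m!}(z+m)^{-1}$ near $z=-m$ one gets $\frac{d}{dz}\Gamma(z)^{-1}\big|_{z=-m}=(-1)^{m}m!$, so with $m=n-1$,
\begin{equation*}
\frac{d}{d\alpha}\frac{1}{\Gamma(\alpha-n+1)}\Big|_{\alpha=0}=(-1)^{n-1}(n-1)!.
\end{equation*}

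Finally I would substitute $G(0)$ (obtained by setting $\alpha=0$ in the non-reciprocal factors) and simplify. The sign $(-1)^{n}$ carried by $\mu_n$ in Lemma~\ref{lemma:gegenbauer algebraic endpoint function} combines with $(-1)^{n-1}$ to give $-1$, matching $\mu_n=-1$. The remaining gamma factors are reduced with the Legendre duplication formula $\Gamma(2\lambda)=\frac{2^{2\lambda-1}}{\sqrt{\pi}}\Gamma(\lambda)\Gamma(\lambda+\frac{1}{2})$ together with $(n-1)!=\Gamma(n)=\Gamma(n+1)/n$ and $\Gamma(n+\lambda+1)=(n+\lambda)\Gamma(n+\lambda)$; a short rearrangement turns $(n-1)!\,(n+\lambda)/\Gamma(n+1+2\lambda)$ into $\Gamma(n+1)\Gamma(n+\lambda+1)/\big(n\,\Gamma(n+\lambda)\Gamma(n+1+2\lambda)\big)$ and collapses the powers of $2$ and $\pi$ to the stated prefactor $4^{\lambda}\Gamma(\lambda)\Gamma(\lambda+\frac{1}{2})/\sqrt{\pi}$. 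The case $f(x)=\log(1+x)$ is identical except that Lemma~\ref{lemma:gegenbauer algebraic endpoint function} carries $\mu_n=1$ for $(1+x)^{\alpha}$, so the surviving sign is $(-1)^{n-1}=(-1)^{n+1}$, exactly the claimed $\mu_n$. The main obstacle is purely one of bookkeeping: correctly recognising that only the reciprocal-gamma factor is differentiated at $\alpha=0$ and handling the derivative of $1/\Gamma$ at a non-positive integer argument; once that is in place the gamma-function simplification is routine.
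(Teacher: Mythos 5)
Your proof is correct, but it follows a genuinely different route from the paper. The paper proves this lemma directly from the contour integral representation of Theorem \ref{thm:gegenbauer contour integral}: since $\log(1-z)$ has a branch point at $z=1$, the contour is deformed (following Elliott) into a large ellipse, a small circle around $z=1$, and two segments hugging the cut $[1,\infty)$; the jump of the logarithm across the cut reduces $a_n^{\lambda}$ to $-2c_{n,\lambda}\int_1^\infty (t+\sqrt{t^2-1})^{-(n+1)}\,{}_2\mathrm{F}_1(\cdots)\,dt$, which is then evaluated by the substitution $t=\cosh\theta$. You instead treat the logarithm as the $\alpha$-derivative of $(1\pm x)^{\alpha}$ at $\alpha=0$ and differentiate the closed form of Lemma \ref{lemma:gegenbauer algebraic endpoint function}, exploiting that $1/\Gamma(\alpha-n+1)$ vanishes at $\alpha=0$ for $n\ge 1$ so that only the term with $\frac{d}{d\alpha}\Gamma(\alpha-n+1)^{-1}\big|_{\alpha=0}=(-1)^{n-1}(n-1)!$ survives; the duplication formula then collapses the constants to the stated prefactor, and I have checked that the signs and gamma factors come out exactly as claimed in both the $(1-x)$ and $(1+x)$ cases. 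Your approach buys a purely algebraic derivation that avoids both the contour deformation and the (nontrivial) "elementary calculations" after $t=\cosh\theta$, at the modest cost of justifying differentiation under the integral sign, which you do. One small point worth making explicit: Lemma \ref{lemma:gegenbauer algebraic endpoint function} is stated for non-integer $\alpha$, while you differentiate at the integer value $\alpha=0$; since both sides of \eqref{eq:gegenbauer coefficients algebraic endpoint} are analytic in $\alpha$ in a neighbourhood of $\alpha=0$ (for fixed $\lambda>0$), the identity extends to that whole neighbourhood by continuity, so evaluating the derivative there is legitimate -- a one-sentence remark would close this.
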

\begin{proof}
We first consider the case $f(x) = \log(1-x)$. Following the ideas
exposed in \cite{elliott1964evaluation}, the contour integral of the
$a_n^{\lambda}$ can be deformed as an ellipse $\mathcal{E}_{R}$ in
the positive direction and a small circle with center at $z = 1$ in
the negative direction. Meanwhile, these two contours are connected
by two line segments which are parallel and above and below the real
axis. Letting $R\rightarrow\infty$ and the radius of the circle tend
to zero, we note that the integral along $\mathcal{E}_{R}$ vanishes
for each $n\geq1$ and the integral along the circle vanishes. Thus,
combining the contributions to $a_n^{\lambda}$ from these two line
segments we find
\begin{align*}
a_n^{\lambda} &= - 2 c_{n,\lambda} \int_{1}^{\infty} \frac{1}{(t +
\sqrt{t^2 - 1})^{n+1}} {}_2\mathrm{ F}_1\left[\begin{matrix} n + 1, & 1 - \lambda; \\
n + \lambda + 1; \hspace{-.7cm} &\end{matrix}  \frac{1}{ (t +
\sqrt{t^2 - 1} )^{2} } \right] dt. 
\end{align*}
Setting $t = \cosh\theta$ and using some elementary calculations
gives \eqref{eq:gegenbauer coefficient endpoint logarithmic}. The
case $f(x) = \log(1+x)$ can be handled similarly and we omit the
details. This completes the proof.
\end{proof}

\bibliographystyle{abbrv}
\bibliography{legendre}



\end{document}